\renewcommand{\bar}{\overline}
\theoremstyle{plain}
\newtheorem{theorem}{Theorem}[section]
\newtheorem{lemma}[theorem]{Lemma}
\newtheorem{proposition}[theorem]{Proposition}
\newtheorem{corollary}[theorem]{Corollary}
\theoremstyle{remark}
\newtheorem{remark}[theorem]{Remark}
\theoremstyle{definition}
\newtheorem{definition}[theorem]{Definition}
\newtheorem{example}[theorem]{Example}
\newtheorem{problem}{Problem}
\def\st{\,\,\big|\,\,}
\def\<{\langle}
\def\>{\rangle}
\let\ge=\geqslant
\let\le=\leqslant
\let\emptyset=\varnothing
\def\calD{{\mathcal D}}
\def\calF{{\mathcal F}}
\def\calR{{\mathcal R}}
\def\calS{{\mathcal S}}
\renewcommand{\bar}{\overline}
\def \r{\mathbb R}
\def \c{\mathbb C}
\def \h{\mathbb H}
\def \d{\mathbb D}
\def \q{\mathbb Q}
\def \z{\mathbb Z}
\def\al{{\alpha}}
\def\be{{\beta}}
\def\ga{{\gamma}}
\def\de{{\delta}}
\def\ve{{\varepsilon}}
\def\Ga{{\Gamma}}
\def\De{{\Delta}}
\def\La{{\Lambda}}
\DeclareMathOperator{\Br}{Br}
\DeclareMathOperator{\conj}{conj}
\DeclareMathOperator{\Isom}{Isom}
\DeclareMathOperator{\Mob}{Mob}
\DeclareMathOperator{\PSL}{PSL}
\let\Im=\undefined \DeclareMathOperator{\Im}{Im}
\def\An{A_0}
\def\Ap{A_{+}}
\def\Am{A_{-}}
\def\Apm{A_{\pm}}
\def\sn{s_0}
\def\sp{s_{+}}
\def\sm{s_{-}}
\def\spm{s_{\pm}}
\def\tp{T_{+}}
\def\tm{T_{-}}
\def\tpm{T_{\pm}}
\def\trio{\De}
\def\trioB{\trio_B}
\def\Oeven{O_{\text{even}}}
\def\Oodd{O_{\text{odd}}}
\def\reven{r_{\text{even}}}
\def\rodd{r_{\text{odd}}}
\def\Deven{D_{\text{even}}}
\def\Dodd{D_{\text{odd}}}
\def\Seven{S_{\text{even}}}
\def\Sodd{S_{\text{odd}}}
\title[Farey Bryophylla]{Farey Bryophylla}
\author{Oleg Karpenkov, Anna Pratoussevitch}
\date{\today}
\newcommand{\OK}[1]{{\color{blue}#1}}
\begin{document}
\maketitle
\input{epsf}

\begin{abstract}
The construction of the Farey tessellation in the hyperbolic plane starts with a finitely generated group of symmetries of an ideal triangle, i.e.\ a triangle with all vertices on the boundary.
It induces a remarkable fractal structure on the boundary of the hyperbolic plane, encoding every element by the continued fraction related to the structure of the tessellation.
The problem of finding a generalisation of this construction to the higher dimensional hyperbolic spaces has remained open for many years. 
In this paper we make the first steps towards a generalisation in the three-dimensional case.
We introduce conformal bryophylla,
a class of subsets of the boundary of the hyperbolic 3-space which possess fractal properties similar to the Farey tessellation.
We classify all conformal bryophylla and study the properties of their limiting sets.
\end{abstract}

\tableofcontents


\section*{Introduction}
\label{section-intro}

\noindent
In this paper we introduce a new generalisation of
the Farey fractal on the boundary of $\h^2$ to the boundary of $\h^3$ and study its basic properties.

There is a well-studied connection between continued fractions, the modular group and the Farey tessellation.
The modular group~$\PSL(2,\z)$ consists of M\"obius transformations
\[
  z\mapsto\frac{\al z+\be}{\ga z+\de},\quad 
  \al,\be,\ga,\de\in\z
\]
and acts on the hyperbolic plane~$\h^2$
(in the upper half-plane model).
The Farey tessellation is a decomposition of the hyperbolic plane~$\h^2$ into the images of the ideal triangle with vertices~$0,1,\infty$ under the action of the modular group, see Figure~\ref{Farey-tes}.
There is a correspondence between the continued fraction of~$x\in\r$ and the way in which a hyperbolic geodesic from some point on the positive imaginary axis to~$x$ cuts across the Farey tessellation
of the hyperbolic plane~$\h^2$ as described in~\cite{S85, S15}.
There are many natural generalisations of this correspondence to more general classes of matrix semi-groups and the corresponding versions of continued fractions and tessellations, such as complex continued fractions (see~\cite{Ho, FKST} and references therein), continued fractions on the Heisenberg group~\cite{LV} etc.

Our approach will be to replace the modular group with a semi-group~$S$ of M\"obius transformations and the boundary Farey fractal with the images of a triangle in~$\c$ under the action of~$S$.
Two-generator semi-groups of real M\"obius transformations were considered in~\cite{Wa},
where $S$-continued fractions were defined as the expansions corresponding to a semi-group~$S$.
In this paper we will introduce and classify a special class of two-generator semi-groups
of complex M\"obius transformations such that the images of a certain triangle under the semi-group form a conformal self-similar fractal set which we call a {\sl bryophyllum}.
The reasons for the choice of the name are explained in Example~\ref{example-L}.
We will focus on the fractal structure of the absolute of these semi-groups.
%
%
Our paper makes the first steps to the solution of the following general question (Problem~35 in~\cite{Karpenkov-book}):
{\it Find a natural generalisation of the Farey tessellation in higher dimensional hyperbolic geometry.} 

This paper is organised as follows.
In Section~\ref{section-background} we recall the classical construction of the Farey tessellation and some general notions of M\"obius geometry.
In Section~\ref{section-conformal-bryophylla} we introduce a general definition of conformal bryophylla and describe the subset of Farey bryophylla.
We show that any bryophyllum is conformally equivalent to precisely one bryophyllum from a particular two-parameter family of canonical conformal bryophylla (Theorem~\ref{theorem-clasification}).
Finally we provide a complete classification of all Farey canonical bryophylla up to conformal transformations (Theorem~\ref{theorem-Farey-region}).
In Section~\ref{Section-on-convergency} we consider a sequence of nested iterates of bryophyllum mappings and state that every such nested sequence converges to a point (Theorem~\ref{convergence-triangles}).
Furthermore we discuss one-dimensional continuous parametrisations of the limiting sets of Farey bryophylla by the Farey coordinate.
In particular we discuss the continued fraction Gauss map and the Gauss measure on Farey bryophylla induced by the Farey coordinate.
Finally in Section~\ref{Proofs of the main results} we provide proofs of the main results.
We conclude with a short Section~\ref{Open questions for further study} outlining several interesting open questions which are beyond the scope of this paper.

\section{Background}
\label{section-background} 

Let us briefly outline the classical construction of the Farey tessellation and the induced action on the boundary of the hyperbolic space.

\subsection{Classical Farey Tessellation}

A triangle in the hyperbolic plane~$\h^2$ is called {\it ideal} if it is formed by three points on the boundary and three geodesics joining them.
It turns out that all ideal triangles are conformally equivalent to each other. 
So one can choose any three points on the boundary as vertices of the triangle.
When working in the upper half-plane model of~$\h^2$,
a convenient choice of vertices is~$0$, $1$, and~$\infty$.
We denote the geodesic ideal triangle with vertices~$0$, $1$, and~$\infty$ by~$\mathcal T$.

Consider the group of symmetries generated by the hyperbolic reflections with respect to the sides of the triangle~$\mathcal T$.
This group acts freely on the hyperbolic plane.
The triangle~$\mathcal T$ is a fundamental domain for this action.
The images of~$\mathcal T$ completely cover the hyperbolic plane without intersections.
Such decomposition of the hyperbolic plane into ideal triangles is called the {\it Farey tessellation} (see Figure~\ref{Farey-tes}).  

\begin{figure}[t]
\centering
\includegraphics[width=13cm]{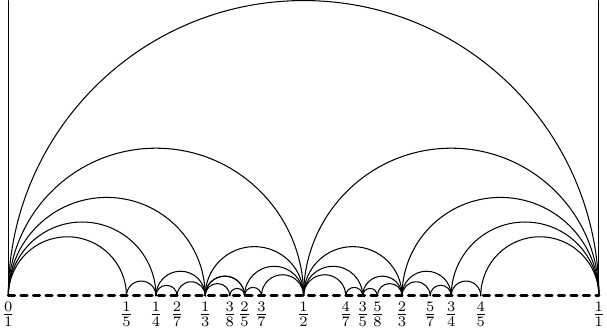}
\caption{Farey tessellation.}
\label{Farey-tes}
\end{figure}

\subsection{Induced Action on the Boundary}
\label{Induced action on the boundary}

The Farey tessellation provides a remarkable structure on the boundary $\partial\h^2=\r\cup\{\infty\}$;
the boundary can be seen in Figure~\ref{Farey-tes} as a dashed horizontal line.
It turns out that the set of all vertices of images of the triangle~$\mathcal T$ on the extended real line coincides with the set of all rational numbers. 
Two rational numbers $p/q$ and $r/s$ are connected by an edge of the tessellation if and only if $|ps-rq|=1$.
Here we assume that pairs of integers~$(p,q)$ and~$(r,s)$ are relatively prime.

Let $B$ be the set of all segments in~$\partial \h^2$ whose vertices are the vertices of the same triangle in the Farey tessellation of~$\h^2$.
Note that none of the segments in~$B$ has integer points in the interior, and that $B$ is invariant under integer shifts.
For these two reasons, we restrict ourselves to the unit segment $I=[0,1]$ and consider the corresponding set $B_I\subset B$ 
consisting of the segments in~$B$
that are completely contained in~$I$.

It turns out that the set~$B_I$ admits a nested structure of a binary tree that is perfectly described by the following two M\"obius maps:
$$
  \tp:x\to\frac{x}{x+1}
  \qquad\hbox{and}\qquad
  \tm:x\to\frac{1}{2-x}.
$$
These maps correspond to two parabolic elements
fixing the points $0$ and $1$ respectively.
They both preserve the Farey tessellation.

Let us consider the semi-group generated by~$\tp$ and~$\tm$.
The images of~$I$ under this semi-group
give all the elements of~$B_I$.
Additionally, different elements of the semi-group produce different elements of~$B_I$.

%

This structure is classically studied in the geometry of numbers~\cite{HW}, hyperbolic geometry~\cite{S85,S15}, Frieze patterns and cluster algebras~\cite{SVZ}, etc.

In this paper we are concerned with the following question:
{\it What would be a generalisation of the triple~$(I,T_+,T_-)$ in~$\h^n$ for $n>2$?}

\subsection{On Hyperbolic and M\"obius Geometry}
\label{section-mobius-geometry}

Let us describe the connection between hyperbolic and M\"obius geometry, for more details see~\cite{FMS, Bea}.
The upper half-space model of the real hyperbolic space
$\h_{\r}^{n+1}=\r^n\times\r_+$
has the metric $ds^2/x_{n+1}^2$.
The boundary of~$\h_{\r}^{n+1}$ is the extended Euclidean space
$\partial\h_{\r}^{n+1}=\r^n\cup\{\infty\}$.
The group of isometries of~$\h_{\r}^{n+1}$
is generated by all reflections in planes that are orthogonal to~$\partial\h_{\r}^{n+1}$
and all inversions in hemi-spheres that are orthogonal to~$\partial\h_{\r}^{n+1}$.
The M\"obius group~$\Mob(\r^n\cup\{\infty\})$ of the extended Euclidean space is generated by all reflections in planes and all inversions in hemi-spheres.
The restriction of isometries of~$\h_{\r}^{n+1}$ to the boundary~$\partial\h_{\r}^{n+1}$ induces an isomorphism
$\Isom(\h_{\r}^{n+1})\to\Mob(\r^n\cup\{\infty\})$.





\subsection{Higher Dimensional Situation}

While there have been many attempts to generalise the example $(I,\tp,\tm)$ to higher dimensions, none of them were entirely successful due to the  topological and geometric obstacles that we would like to discuss below. 

\vspace{2mm}

{\noindent \underline{Topological obstacles:}}
In the one-dimensional case the action of the group
of symmetries generated by the hyperbolic reflections with respect to the sides of the triangle~$\mathcal T$ can be identified with the action of the subgroup of integer matrices with determinant one.
We can interpret a fraction~$p/q$ as a projective vector~$(p:q)$. 
Such approach has a straightforward generalisation to the higher dimensional projective planes consisting of points $(p_0:\cdots:p_n)$.
Various semi-groups of integer matrices can be used,
such as Jacobi-Perron continued fractions and subtractive algorithms, see~\cite{Sch} and~\cite{Karpenkov-book}.
The downside of this approach is that it leads to projective rather than hyperbolic spaces, see~\cite{Wa,GR,FKST}.
In particular, the boundary of~$\h^n$ is the space~$\r^{n-1}$ compactified by a single point, 
while $\r P^{n-1}$ is compactified by~$\r P^{n-2}$.
As a result, all such generalisations are not within the framework of hyperbolic geometry.

\vspace{2mm}

{\noindent \underline{Geometric obstacles:}}
Since our aim is to get a generalisation of the Farey tessellation in the hyperbolic setting, we would like to stay within the group of hyperbolic reflections in hemi-spheres in~$\h^n$.
The boundaries of these hemi-spheres are $(n-1)$-dimensional spheres.
In the case~$n=2$, spheres of dimension~$(n-1)$ are just segments and we have $I=T_+(I)\cup T_-(I)$
with $T_+(I)\cap T_-(I)=\{1/2\}$.
However in the case~$n\ge2$, spheres of dimension~$(n-1)$ do not fit nicely together without interior intersections.
Therefore, they do not provide a tessellation of the boundary of the hyperbolic space. 

\vspace{2mm}
In this paper we study the case of certain remarkable subsets in~$\partial\h^3$  which we call {\it conformal bryophylla}.
A conformal bryophyllum is equipped with a pair of conformal mappings that inherit the main fractal properties of the classical Farey tessellation.
We provide a conformal classifications of 
conformal bryophylla and discuss their limiting invariant sets. 

\section{Bryophylla}
\label{section-conformal-bryophylla}

We start with the definitions of conformal bryo\-phil\-la and canonical bryo\-phil\-la in Subsections~\ref{subsection-conf-bryo} and~\ref{subsection-canon-bryo}.
In Subsection~\ref{subsection-classification} we state our main classification result for conformal bryophilla
that says that every conformal bryophyllum is conformally equivalent to a unique canonical bryophyllum.
Further in Subsection~\ref{subsection-examples} we illustrate the definitions with several examples.
Affine canonical bryophylla generated by affine transformations are discussed in Subsection~\ref{subsection-affine}.
For the edges of a bryophyllum to bound a curvilinear triangle and for the images of this triangle under~$\tpm$ to have similar inclusion properties to the classical Farey tessellation, we have to impose additional conditions.
We introduce the class of bryophylla satisfying these conditions in Subsection~\ref{subsection-farey}
and call them Farey bryophylla.
We conclude this section with the discussion of the structure of the configuration space of canonical bryophylla in Subsection~\ref{subsection-config}.

\subsection{Conformal Bryophylla}
\label{subsection-conf-bryo}

For simplicity we use the standard complex coordinates on the complex plane.
We will use the term {\it circline\/} for a subset that can be either a circle or a straight line.


\begin{definition}
\label{def-conf-bryo}
A {\sl $($conformal$)$ bryophyllum} $(\An,\Ap,\Am,\sp,\sm,\sn,\tp,\tm)$
is a collection of
\begin{enumerate}[$\bullet$]
\item
{\bf Vertices:} Three distinct points $\An$, $\Ap$ and~$\Am$;
\vspace{1mm}
\item
{\bf Edges:} Three arcs/segments, $\sn$ between $\Ap$ and $\Am$, $\sp$ between $\Ap$ and $\An$, and $\sm$ between $\Am$ and $\An$, such that the circlines determined by~$\sp$ and~$\sm$ do not coincide and $\sp\cap \sm=\{\An\}$;

%
\vspace{1mm}
\item
{\bf Maps:} Two conformal maps $\tp$ and $\tm$
that generate the fractal, satisfying the following conditions
\begin{enumerate}[$\circ$]
\item
$\tp(\Ap)=\An$, \quad  $\tp(\Am)=\Ap$, \quad $\tp(\sn)=\sp$;
\item
$\tm(\Am)=\An$,  \quad $\tm(\Ap)=\Am$, 
 \quad $\tm(\sn)=\sm$;
\item $\tp(\An)\in\sn$, 
 \quad $\tm(\An)\in\sn$;
\item
$\tp(\sm)$ and~$\tm(\sp)$ are both contained in the circle determined by the arc~$\sn$.
\end{enumerate}
\end{enumerate}
\end{definition}

\noindent
We discus several examples of conformal bryophylla later in Subsection~\ref{subsection-examples} but first we need to introduce some notation.

\vspace{2mm}
\noindent
Let us note the following general property of conformal bryophylla:

\begin{proposition}
\label{angles}
In a conformal bryophyllum $(\An,\Ap,\Am,\sp,\sm,\sn,\tp,\tm)$,
the angles between the following edges coincide:
\[\angle(\sn,\sp)=\angle(\sn,\sm).\]
\end{proposition}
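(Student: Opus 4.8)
The plan is to exploit the angle-preserving property of the conformal map~$\tp$ together with the hypothesis that $\tp(\sm)$ lies on the circline determined by~$\sn$. First I would pin down where the two angles actually live: the edges $\sn$ and $\sp$ share the endpoint~$\Ap$ (they are the arcs between $\Ap,\Am$ and between $\Ap,\An$ respectively), so $\angle(\sn,\sp)$ is the angle measured at~$\Ap$; likewise $\sn$ and $\sm$ share the endpoint~$\Am$, so $\angle(\sn,\sm)$ is measured at~$\Am$. Let $C$ denote the circline determined by the arc~$\sn$, and note that $\Ap\in C$ since $\Ap$ is an endpoint of~$\sn$.

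Next I would transport the angle at~$\Am$ over to~$\Ap$ by applying~$\tp$. Because $\tp$ is conformal it preserves the magnitude of angles, and by the defining relations $\tp(\Am)=\Ap$ and $\tp(\sn)=\sp$. Hence
\[
  \angle(\sn,\sm)=\angle\bigl(\tp(\sn),\tp(\sm)\bigr)=\angle\bigl(\sp,\tp(\sm)\bigr),
\]
where the final angle is taken at~$\Ap=\tp(\Am)$. The key step is then to identify $\tp(\sm)$ with $\sn$ as far as its tangent direction at~$\Ap$ is concerned: by hypothesis $\tp(\sm)\subset C$, and one endpoint of $\tp(\sm)$ is $\tp(\Am)=\Ap$, so $\tp(\sm)$ is an arc of~$C$ passing through~$\Ap$ and its tangent there is the tangent line to~$C$. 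Since $\sn$ is itself an arc of~$C$ through~$\Ap$, it has the very same tangent at~$\Ap$. Therefore $\angle(\sp,\tp(\sm))=\angle(\sp,\sn)$ at~$\Ap$, and combining this with the displayed equation yields $\angle(\sn,\sm)=\angle(\sn,\sp)$, as required.

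The computation is short, and the one point I expect to require genuine care is the tangency claim. One must be explicit that ``the circline determined by~$\sn$'' really does pass through~$\Ap$ — which is immediate because $\Ap$ is an endpoint of the arc~$\sn$ — and that containment of the \emph{entire} arc $\tp(\sm)$ in~$C$ forces its tangent at~$\Ap$ to coincide with that of~$\sn$, rather than merely sharing the point~$\Ap$. Once the tangent directions are matched, everything reduces to the conformal invariance of angles, and the symmetric argument using~$\tm$ gives an independent check.
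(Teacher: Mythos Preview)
Your proof is correct and follows essentially the same approach as the paper: transport the angle at~$\Am$ to~$\Ap$ via the conformal map~$\tp$, using $\tp(\sn)=\sp$ and the hypothesis that $\tp(\sm)$ lies on the circline of~$\sn$. The paper's version is a single sentence that simply asserts ``$\tp$ maps the edges $\sn$ and $\sm$ to sub-arcs of $\sp$ and $\sn$,'' whereas you spell out the tangency argument at~$\Ap$ explicitly; your extra care is warranted but not a different idea.
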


\begin{proof}
The conformal map~$\tp$ maps the edges $\sn$ and $\sm$ to sub-arcs of $\sp$ and $\sn$,
hence the angles between these pairs of edges coincide, $\angle(\sn,\sp)=\angle(\sn,\sm)$.
\end{proof}

\subsection{Canonical Bryophylla}
\label{subsection-canon-bryo}

We will show in Section~\ref{subsection-classification} that any conformal bryophyllum can be transformed to a bryophyllum in a particular family of bryophylla called canonical.
Let us give the definition (see Figure~\ref{4mapping-123} for an example): 



\begin{definition}
\label{def-canon-bryo}
Let~$\varphi\in(0,\pi)$, $\psi\in[0,\pi]$ and $\varphi+\psi\ne\pi$.
A {\it canonical\/} bryophyllum $\Br_{\varphi,\psi}$
is the conformal bryophyllum $(\An,\Ap,\Am,\sp,\sm,\sn,\tp,\tm)$ with the following data:

\begin{itemize}
\item
{\bf Vertices:} $\An=1$, \quad  $\Ap=e^{i\varphi}$, \quad  $\Am=e^{-i\varphi}$.
\item
{\bf Edges:} The edges~$\spm$ are the straight line segments $s_{\pm}=[\Apm,\An]$ between $\Apm$ and $\An$.
If $\psi\ne\varphi/2$ then the edge~$\sn$ is the arc between~$\Ap$ and~$\Am$ of the circle through the points~$\Ap,\Am,re^{i\psi}$, where
\[r=\frac{\cos(\varphi/2+\psi)}{\cos(\varphi/2)}.\]
If $\psi=\varphi/2$ then the edge~$\sn$ is the straight line segment between the points $\Ap$ and~$\Am$.
\item
{\bf Maps:} The conformal maps $\tpm$ are given by
\[
  \tpm(z)
  =\frac
    {(re^{\pm i\psi}-\cos(\varphi/2)\cdot e^{\pm i\varphi/2})z+(\cos(\varphi/2)\cdot e^{\pm i\varphi/2}-r\cos(\varphi)e^{\pm i\psi})}
    {(r\cos(\varphi/2)\cdot e^{\pm i(\psi-\varphi/2)}-\cos(\varphi))z+(1-r\cos(\varphi/2)\cdot e^{\pm i(\psi-\varphi/2)})}.
\]
\end{itemize}
\end{definition}

\begin{remark}
\label{rem-canon-bryo}
Consider a canonical bryophyllum $Br_{\varphi,\psi}=(\An,\Ap,\Am,\sp,\sm,\sn,\tp,\tm)$.
Let $O$ be the origin.
Then
\begin{enumerate}[$\bullet$]
\item
The angle between~$\sp$ and~$\sm$ is~$\angle(\sp,\sm)=\pi-\varphi$.
\item
If $\varphi/2+\psi<\pi/2$ then $r=|O\tp(\An)|>0$, $\varphi=\angle\An O\Ap$, $\psi=\angle\An O\tp(\An)$.
\item
If $\varphi/2+\psi>\pi/2$ then $r=-|O\tp(\An)|<0$,
$\varphi=\angle\An O\Ap\pm\pi$, $\psi=\angle\An O\tp(\An)\pm\pi$.
\end{enumerate}
\end{remark}

\subsection{Classification of Conformal Bryophylla}
\label{subsection-classification}

We start with the definition of conformal equivalence of bryophylla.

\begin{definition}
Two conformal bryophylla are said to be {\it conformally equivalent\/} if there exists a conformal map~$T$ sending vertices, edges, and maps  of one bryophyllum
to the corresponding vertices, edges, and maps of the other one.
\end{definition}

\bigskip\noindent
In fact, every conformal bryophyllum is conformally equivalent to exactly one canonical conformal bryophyllum, so we have the following statement:

\begin{theorem}\label{theorem-clasification}
{\bf(Conformal classification of bryophylla)}

\begin{enumerate}[(1)]
\item
Any conformal bryophyllum $(\An,\Ap,\Am,\sp,\sm,\sn,\tp,\tm)$ is conformally equivalent to some canonical bryophyllum $\Br_{\varphi,\psi}$.
\item
Two canonical bryophylla $\Br_{\varphi_1,\psi_1}$ and $\Br_{\varphi_2,\psi_2}$ are conformally equivalent if and only if
$\varphi_1=\varphi_2$ and $\psi_1=\psi_2$.
\end{enumerate}
\end{theorem}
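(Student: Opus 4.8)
\noindent
The plan is to prove the two parts separately, treating the construction of a normalising map (part~(1)) as the substantial direction and the rigidity (part~(2)) as a short consequence of the three‑point rigidity of Möbius maps.

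For part~(2), suppose a conformal map~$T$ carries $\Br_{\varphi_1,\psi_1}$ onto $\Br_{\varphi_2,\psi_2}$. Since an equivalence preserves the labelled data, $T$ must fix the distinguished vertex $\An=1$ (the common point of~$\sp$ and~$\sm$) and must send $\Apm=e^{\pm i\varphi_1}$ to $\Apm=e^{\pm i\varphi_2}$. Because $T$ is conformal it preserves the angle $\angle(\sp,\sm)$, which by Remark~\ref{rem-canon-bryo} equals $\pi-\varphi_j$ for $\Br_{\varphi_j,\psi_j}$; hence $\varphi_1=\varphi_2=:\varphi$. Now $T$ fixes the three distinct points $1,e^{i\varphi},e^{-i\varphi}$, so $T=\Id$. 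Consequently $\Br_{\varphi,\psi_1}$ and $\Br_{\varphi,\psi_2}$ coincide as bryophylla; comparing the point $\tp(\An)=re^{i\psi}$ (equivalently the arc~$\sn$) and using that $\psi\mapsto re^{i\psi}$ is injective on the admissible range then forces $\psi_1=\psi_2$.

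For part~(1) I would build the normalising map in stages. First read off $\varphi:=\pi-\angle(\sp,\sm)\in(0,\pi)$, a conformal invariant of the bryophyllum. The circlines carrying~$\sp$ and~$\sm$ are distinct and meet at~$\An$ (they are not tangent, since that would give $\varphi=\pi$, which is excluded); sending their second common point (possibly~$\infty$) to~$\infty$ turns both into straight lines through the now finite image of~$\An$, and a similarity then places~$\An$ at~$1$ and rotates the two lines into the position symmetric about the real axis, so that they lie on the canonical lines $L_{\pm}$ through~$1$ and~$e^{\pm i\varphi}$. Here the preserved angle~$\pi-\varphi$ guarantees the angle of the wedge matches, and the cyclic orientation induced by~$\tp$ (which sends $\Am\mapsto\Ap\mapsto\An$) fixes which ray is which. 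After these steps the only remaining conformal freedom fixing~$\An=1$ and both lines is a real dilation about~$1$.

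The heart of the argument is then to show that a single such dilation places both vertices simultaneously at $e^{i\varphi}$ and $e^{-i\varphi}$. Since the dilation scales $|\Ap-1|$ and $|\Am-1|$ by the same factor, this amounts to the \emph{balance identity} $|\Ap-1|=|\Am-1|$, i.e.\ $\Am=\bar{\Ap}$, in the straightened configuration. This is the main obstacle: it is not implied by the vertices and edges alone, but is forced by the simultaneous existence of \emph{both} maps $\tp$ and~$\tm$. Concretely, writing out the defining conditions $\tp(\Ap)=\An$, $\tp(\Am)=\Ap$, $\tp(\sn)=\sp$, $\tp(\An)\in\sn$, with $\tp(\sm)$ contained in the circline of~$\sn$ (and the mirror conditions for~$\tm$) yields an overdetermined‑looking system for the coefficients of $\tp$ and~$\tm$; solving it shows that the apparently superfluous equation is automatically satisfied and that the two vertices must be mirror images across the real axis. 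Equivalently, one checks that the resulting maps obey $\tm=\sigma\tp\sigma$ with $\sigma(z)=\bar z$, which is exactly the symmetry visible in the canonical formula for~$\tpm$. Once the vertices and the two straight edges are canonical, the circline of~$\sn$ is a circle through~$e^{\pm i\varphi}$ with one real degree of freedom; recording the position of $\tp(\An)\in\sn$ as $re^{i\psi}$ identifies~$\psi$, fixes the arc~$\sn$, and, by three‑point rigidity applied to the vertex conditions, pins down the maps~$\tpm$. Matching the data against Definition~\ref{def-canon-bryo} shows the normalised bryophyllum is precisely $\Br_{\varphi,\psi}$. I expect the balance identity to be the only genuinely delicate step; everything else is either an invariant computation or an application of the rigidity of Möbius transformations.
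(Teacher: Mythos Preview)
Your argument for part~(2) is correct and close to the paper's: both first use conformal invariance of $\angle(\sp,\sm)=\pi-\varphi$ to obtain $\varphi_1=\varphi_2$. You then invoke three-point rigidity to conclude $T=\Id$ and compare $\tp(\An)$ directly; the paper instead compares the arc~$\sn$ and shows that the centre $x(\psi)$ of~$\sn$ is injective in~$\psi$ (Lemma~\ref{lemma-centre-sn-2}). Your route is a little cleaner.

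For part~(1) there is a genuine gap at precisely the step you flag. You correctly isolate the balance identity $|\Ap-1|=|\Am-1|$ as the crux, but your proposed justification---an ``overdetermined system'' coming from the simultaneous existence of both~$\tp$ and~$\tm$---is not carried out, and the intuition that \emph{both} maps are required is off. The symmetry already follows from \emph{one} map: since $\tp$ is conformal with $\tp(\sn)=\sp$ and $\tp(\sm)$ contained in the circline of~$\sn$, the angle at~$\Am$ between~$\sn$ and~$\sm$ equals the angle at $\tp(\Am)=\Ap$ between~$\sp$ and~$\sn$. This is Proposition~\ref{angles}, and the equal-base-angle condition $\angle(\sn,\sp)=\angle(\sn,\sm)$ is what actually forces the symmetry.

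The paper exploits this through a different normalisation than yours. Rather than first making the wedge symmetric about the real axis (which leaves a dilation freedom and the open balance question), it applies a similarity sending the circumscribed circle of $\An,\Ap,\Am$ to the unit circle with $\An\mapsto1$. Then $\ell_\pm$ are secants of~$\ell_0$ through~$1$ meeting~$\ell_0$ at equal angles, so reflection in the line through~$1$ and the centre of~$\ell_0$ swaps them and hence swaps $\Ap\leftrightarrow\Am$; since this reflection preserves the three-point set $\{\An,\Ap,\Am\}$ it preserves the unit circle, and as it fixes~$1$ it must be complex conjugation. Thus $\Am=\bar{\Ap}$ drops out with no balancing to do. After that, identifying~$\psi$ via $\tp(\An)=re^{i\psi}$, deriving the constraint $r=\cos(\varphi/2+\psi)/\cos(\varphi/2)$ from $\tp(\tp(\An))\in\sp$, and verifying the explicit formulae for~$\tpm$ proceeds as you sketch (and is where most of the paper's computation lies).
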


\bigskip\noindent
We will prove this theorem later in Subsections~\ref{theorem-clasification-1-proof}
and~\ref{theorem-clasification-2-proof}.


\subsection{Some Examples of Canonical Bryophylla}
\label{subsection-examples}

Let us consider some examples of canonical bryo\-phylla.

\begin{example}\label{exam2}
\label{example-L}
First we consider the canonical bryophyllum $B=\Br_{2\pi/3,\pi/6}$,
see Figure~\ref{fig-classic-bryo}.
The first 8 iteration steps of the arcs~$\spm$ and~$\sn$ of $\Br_{2\pi/3,\pi/6}$ with respect to the maps~$\tpm$ are shown on the left of Figure~\ref{fig-classic-bryo}.
This bryophyllum is very special as it is the only canonical bryophyllum with both
$\angle(\sp,\sn)=\angle(\sm,\sn)=0$ and $\tp(\An)=\tm(\An)$.
Therefore there is an additional symmetry:
$\tp(B)$ can be obtained from $\tm(B)$ by rotation around the point $\tp(\An)=\tm(\An)$ through $2\pi/3$.
(Rotating once more, we obtain a nice tessellation of the Euclidean equilateral triangle.)
\end{example}

\begin{figure}[h]
$$
\begin{array}{c}
\includegraphics[width=7cm]{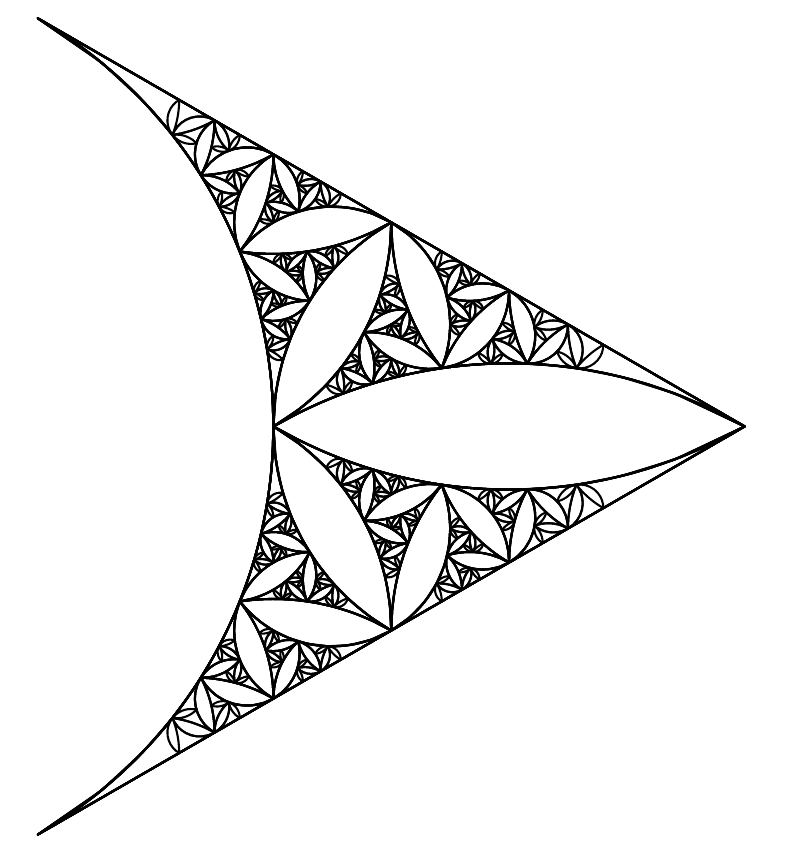}
\end{array}
\begin{array}{c}
\includegraphics[width=8cm]{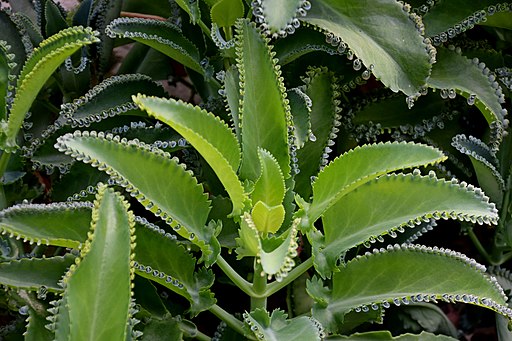}
\end{array}
$$
\caption{Canonical bryophyllum $\Br_{2\pi/3,\pi/6}$ (left), Bryophyllum laetivirens~\cite{Gol} (right).}
\label{fig-classic-bryo}
\end{figure}

\begin{remark}
The name of the fractal object that we study in this paper, bryophyllum, was inspired by the similarity between the leaves of the plant Kalanchoe (bryophyllum laetivirens) (on the right in Figure~\ref{fig-classic-bryo}) and the image of the canonical bryophyllum $\Br_{2\pi/3,\pi/6}$ (on the left in Figure~\ref{fig-classic-bryo}).
\end{remark}


\begin{example}
\label{exam1}
Now let us consider the case of the canonical bryophyllum $\Br_{\pi/2,\pi/5}$,
see Figure~\ref{4mapping-123}.
We show the domain $\An\Ap\Am$ in the middle of Figure~\ref{4mapping-123},
while its images under the mappings $\tm$ and $\tp$ are on the left and on the right respectively.
Figure~\ref{4mapping-456} shows all iterates of depth one, two, and three for $\Br_{\pi/2,\pi/5}$.

\begin{figure}[t]
$$
\begin{array}{c}
\includegraphics{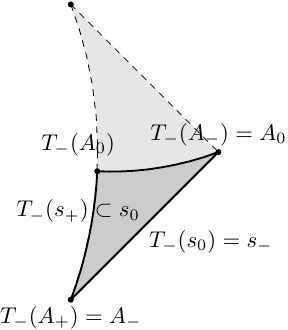}
\end{array}
\longleftarrow
\begin{array}{c}
\includegraphics{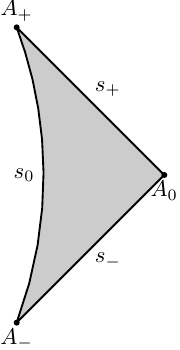}
\end{array}
\longrightarrow
\begin{array}{c}
\includegraphics{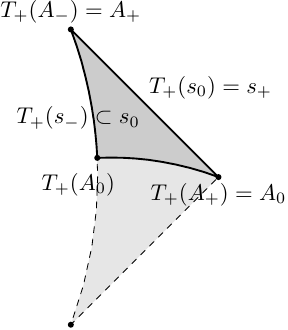}
\end{array}
$$
\caption{The triangle $\An\Ap\Am$ for the canonical bryophyllum $\Br_{\pi/2,\pi/5}$ (middle),
and its images under the mappings~$\tm$ (left) and~$\tp$ (right).}
\label{4mapping-123}
\end{figure}

\begin{figure}[t]
$$
\begin{array}{c}
\includegraphics{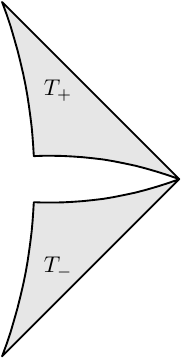}
\end{array}
\longrightarrow
\begin{array}{c}
\includegraphics{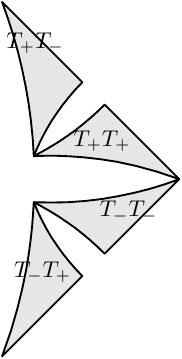}
\end{array}
\longrightarrow
\begin{array}{c}
\includegraphics{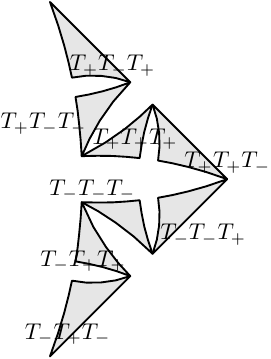}
\end{array}
$$
\caption{All iterates of depth one, two, and three for $\Br_{\pi/2,\pi/5}$.}
\label{4mapping-456}
\end{figure} 
\end{example}

\begin{example}
\label{exam3}
\label{Peano-example}
Let us finally consider the case of the canonical bryophyllum $\Br_{\pi/2,\pi/4}$, see Figure~\ref{fig-peano-bryo}.
\begin{figure}
\centering
\includegraphics[width=2cm]{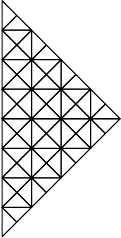}
\caption{Canonical Peano bryophyllum $\Br_{\pi/2,\pi/4}$.}
\label{fig-peano-bryo}
\end{figure}
Here all possible compositions of~$\tp$ and~$\tm$
of length~$n$ represent the construction of the Peano space-filling curve on the Euclidean right-angled isosceles triangle.
We call this bryophyllum the {\it Peano bryophyllum}.
The maps~$\tpm$ for the Peano bryophyllum are affine.
The set of all affine bryophylla is discussed below in Subsection~\ref{subsection-affine}.
\end{example}

\subsection{Affine Canonical Bryophylla}
\label{subsection-affine}

Let us start with the following general definition:

\begin{definition}
We call a canonical bryophyllum {\it affine\/}
if the maps~$\tp$ and~$\tm$ are represented by affine transformations.
\end{definition}

\bigskip\noindent
We have the following description of affine canonical bryophylla:

\begin{proposition}
\label{prop-affine}
A canonical bryophyllum $\Br_{\varphi,\psi}$ is affine if and only if
\begin{equation*}
\label{equation-affine}
\psi=\varphi/2.
\end{equation*}
\end{proposition}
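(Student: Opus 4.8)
The plan is to decide affineness directly from the explicit formula for $\tpm$ given in Definition~\ref{def-canon-bryo}. A M\"obius transformation $z\mapsto (az+b)/(cz+d)$ is affine precisely when it fixes~$\infty$, i.e.\ when the coefficient~$c$ of~$z$ in the denominator vanishes (and the constant term~$d$ is then nonzero). So the whole statement should reduce to analysing when the denominator coefficient
\[
  c_{\pm}=r\cos(\varphi/2)\cdot e^{\pm i(\psi-\varphi/2)}-\cos(\varphi)
\]
equals zero. First I would substitute the canonical value $r=\cos(\varphi/2+\psi)/\cos(\varphi/2)$ (valid since $\psi\ne\varphi/2$ is not forced here — I must keep an eye on the $\psi=\varphi/2$ case separately, where $\sn$ degenerates to a segment but the formula for $\tpm$ still makes sense). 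After substitution,
\[
  c_{\pm}=\cos(\varphi/2+\psi)\,e^{\pm i(\psi-\varphi/2)}-\cos\varphi.
\]

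The core computation is then to show $c_{\pm}=0$ if and only if $\psi=\varphi/2$. For the converse direction (the easy one), setting $\psi=\varphi/2$ makes the exponential equal to~$1$ and turns $\cos(\varphi/2+\psi)$ into $\cos\varphi$, so $c_{\pm}=\cos\varphi-\cos\varphi=0$ immediately; I would also check $d_\pm=1-c_\pm\ne0$ and $a_\pm\ne0$ so that the map is a genuine (invertible) affine map, not degenerate. For the forward direction I would expand $c_{\pm}$ into real and imaginary parts using $e^{\pm i\theta}=\cos\theta\pm i\sin\theta$ with $\theta=\psi-\varphi/2$. The imaginary part is $\pm\cos(\varphi/2+\psi)\sin(\psi-\varphi/2)$, and the real part is $\cos(\varphi/2+\psi)\cos(\psi-\varphi/2)-\cos\varphi$. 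Requiring the imaginary part to vanish forces either $\cos(\varphi/2+\psi)=0$ or $\sin(\psi-\varphi/2)=0$.

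The main obstacle, and the step I would treat most carefully, is disposing of the spurious branch $\cos(\varphi/2+\psi)=0$. This corresponds exactly to $r=0$, i.e.\ $\tpm(\An)=O$; I expect this case to be excluded either because it violates the defining conditions of a bryophyllum (the image vertex cannot land on the origin while still giving the required arc structure) or because the real part of $c_\pm$ then reduces to $-\cos\varphi$, which can only vanish at $\varphi=\pi/2$, a single point one checks separately. On the remaining branch $\sin(\psi-\varphi/2)=0$, together with $\psi,\varphi\in(0,\pi)$ and the standing hypothesis $\varphi+\psi\ne\pi$, the only admissible solution is $\psi-\varphi/2=0$, giving $\psi=\varphi/2$ as desired; here I would use the range restrictions to rule out $\psi-\varphi/2=\pm\pi$. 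A useful sanity check before writing the argument is Remark~\ref{rem-canon-bryo}: the condition $\psi=\varphi/2$ is precisely the boundary case where $\sn$ becomes a straight segment, which geometrically is exactly when all three edges are segments and one expects the generating maps to preserve lines, i.e.\ to be affine — so the algebra and the geometry should corroborate each other.
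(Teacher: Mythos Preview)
Your approach is correct, and for the implication $\psi=\varphi/2\Rightarrow\text{affine}$ it is essentially the paper's argument (both show $c_\pm=0$), though your direct substitution is a touch cleaner than the paper's route via Lemma~\ref{lemma-denominator}.

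For the converse, affine $\Rightarrow\psi=\varphi/2$, your route differs genuinely from the paper's. The paper argues geometrically in one line: $\tp$ sends $\sn$ to the straight segment $\sp$, and an affine map can only send a segment to a segment, so $\sn$ must itself be a segment, which by Definition~\ref{def-canon-bryo} forces $\psi=\varphi/2$. Your algebraic analysis of $c_\pm=0$ also works, but note that your first proposed way of disposing of the branch $\cos(\varphi/2+\psi)=0$ is wrong: the case $r=0$ is \emph{not} excluded by the definition (it lies on the segment $BE$ in the configuration space). Your second alternative is the right one --- the real part then equals $-\cos\varphi$, forcing $\varphi=\pi/2$, whence $\varphi/2+\psi=\pi/2$ gives $\psi=\pi/4=\varphi/2$, so this branch is absorbed into the main case rather than being spurious. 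The geometric argument buys you a one-line proof with no case analysis; your algebraic argument buys independence from the meaning of the edge~$\sn$ and would generalise to other normalisations.
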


\bigskip\noindent
We will use the following lemma that we will prove later in Subsection~\ref{subsection-centre-sn}:

\begin{lemma}
\label{lemma-denominator}
Let $0<\varphi <\pi$, then 
\[\cos(\varphi)-r\cos(\psi)=\frac{\sin(\varphi+\psi)\sin(\psi-\varphi/2)}{\cos(\varphi/2)}.\]
\end{lemma}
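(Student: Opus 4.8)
The plan is to verify the identity by a direct computation, substituting the definition $r=\cos(\varphi/2+\psi)/\cos(\varphi/2)$ and reducing both sides to a common product-to-sum form. Since $0<\varphi<\pi$ we have $0<\varphi/2<\pi/2$, so $\cos(\varphi/2)>0$; in particular the denominator never vanishes. This is the only place where the hypothesis on $\varphi$ is used, and no genuine obstacle arises beyond the bookkeeping of signs.

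First I would multiply the asserted identity through by $\cos(\varphi/2)$, using the value of $r$ to rewrite $\cos(\varphi/2)\cdot r\cos(\psi)=\cos(\varphi/2+\psi)\cos(\psi)$. This reduces the claim to the equivalent trigonometric equation
\[
\cos(\varphi/2)\cos(\varphi)-\cos(\varphi/2+\psi)\cos(\psi)=\sin(\varphi+\psi)\sin(\psi-\varphi/2),
\]
in which $r$ no longer appears.

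Next I would apply the product-to-sum formulas to each of the three products. Using $\cos A\cos B=\tfrac12(\cos(A-B)+\cos(A+B))$, the left-hand side becomes
\[
\tfrac12\bigl(\cos(\varphi/2)+\cos(3\varphi/2)\bigr)-\tfrac12\bigl(\cos(\varphi/2)+\cos(\varphi/2+2\psi)\bigr)=\tfrac12\bigl(\cos(3\varphi/2)-\cos(\varphi/2+2\psi)\bigr),
\]
the two copies of $\cos(\varphi/2)$ cancelling. For the right-hand side I would use $\sin A\sin B=\tfrac12(\cos(A-B)-\cos(A+B))$ with $A=\varphi+\psi$ and $B=\psi-\varphi/2$, noting that $A-B=3\varphi/2$ and $A+B=\varphi/2+2\psi$, which gives
\[
\sin(\varphi+\psi)\sin(\psi-\varphi/2)=\tfrac12\bigl(\cos(3\varphi/2)-\cos(\varphi/2+2\psi)\bigr).
\]
The two expressions coincide, so dividing back by $\cos(\varphi/2)$ recovers the stated identity.

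The argument is entirely mechanical, so there is no substantive difficulty; the only care required is tracking the signs in the product-to-sum expansions and confirming that $\cos(\varphi/2)\neq0$ so that clearing the denominator is legitimate.
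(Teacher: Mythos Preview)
Your proof is correct and follows essentially the same approach as the paper: substitute the definition of $r$, clear the denominator $\cos(\varphi/2)$, and apply product-to-sum formulas to both $\cos$-products on the left and the $\sin$-product on the right, obtaining $\tfrac12(\cos(3\varphi/2)-\cos(\varphi/2+2\psi))$ in each case. The paper presents this as a one-directional chain of equalities (using the difference-to-product formula for the final step), but the computation is identical.
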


\bigskip\noindent
{\bf Proof of Proposition~\ref{prop-affine}:}
Suppose that the canonical bryophyllum $\Br_{\varphi,\psi}$
is affine.
The map~$\tp$ maps the edge~$\sn$ to the straight line segment~$\sp$.
If the map~$\tp$ is affine, this implies that the edge~$\sn$ is also a straight line segment.
According to Definition~\ref{def-canon-bryo}, this implies $\psi=\varphi/2$.

On the other hand, suppose that $\psi=\varphi/2$,
then the maps~$\tpm$ are of the form
\[
  \tpm(z)=\frac{\al z+\be}{\ga z+\de},
  \quad\text{where}\quad
  \ga=r\cos(\varphi/2)\cdot e^{\pm i(\psi-\varphi/2)}-\cos(\varphi).
\]
Substituting~$\psi=\varphi/2$ and using Lemma~\ref{lemma-denominator}, we obtain
\[
  \ga
  =r\cos(\varphi/2)-\cos(\varphi)
  =-\frac{\sin(\varphi+\psi)\sin(\psi-\varphi/2)}{\cos(\varphi/2)}
  =0,
\]
hence the maps~$\tpm$ are affine.\qed

\begin{example}
\label{example-trio-symm}
Let us consider the case of the canonical bryophyllum $\Br_{2\pi/3,\pi/3}$.
This case is excluded in Definition~\ref{def-canon-bryo} as $\varphi+\psi=\pi$ but it satisfies the conditions of Proposition~\ref{prop-affine}, so would fit into the family of affine canonical bryophylla.
The vertices $\An=1$, $\Ap=e^{2\pi i/3}$, $\Am=e^{-2\pi i/3}$ are the vertices of an Euclidean equilateral triangle with centre at the origin.
The edges $\sp$, $\sn$, $\sm$ are the straight line segments between the vertices.
The maps~$\tpm$ are the symmetries of the triangle given by the rotations through~$\pm2\pi/3$ around the origin.
\end{example}

\subsection{Farey Canonical Bryophylla}
\label{subsection-farey}

All examples in Subsection~\ref{subsection-examples} have 
the following property that we will call the {\bf Edge Condition}:
The edges $\sp$, $\sm$, $\sn$ intersect only in the vertices $\Ap$, $\Am$, $\An$ respectively and bound an open connected curvilinear triangle.
We will investigate which canonical bryophylla have this property.

The classical Farey tessellation has the following property that we will call the {\bf Inclusion Condition}: 
The images of the ideal triangle~$\mathcal T$ under the maps~$\tpm$ are contained in~$\mathcal T$.

We will extend these conditions to bryophylla.


\begin{definition}
\label{def-farey-bryo}
Consider a conformal bryophyllum 
$B=(\An,\Ap,\Am,\sp,\sm,\sn,\tp,\tm)$ whose edges satisfy the
\begin{itemize}
\item{\bf Edge Condition:}
The arcs $\sp$, $\sm$, and $\sn$ intersect only in the vertices.
\end{itemize}
In this case we set $\Delta =\Delta_{B}$ to be the connected component of the complement of
$\sp\cup\sm \cup\sn$ which does not contain the point $(\sp\cap\sm)\setminus\{\An\}$.
Then $B$ is said to be 
{\it Farey} if the following two conditions are satisfied:

\begin{itemize}
\item
{\bf Inclusion Condition:} 
$
\tp(\trio)\subset\trio
\quad\hbox{and}\quad
\tm(\trio)\subset\trio.
$
\item
{\bf Separation Condition:} The images $T_{\pm}(\trio)$ do not overlap:
$\tp(\trio)\cap\tm(\trio)=\{A_0\}$.
\end{itemize}
\end{definition}

We observe the following important property of Farey bryophilla:

\begin{proposition}
All inner angles of~$\trioB$ for a Farey bryophyllum~$B$ do not exceed $\pi$.
\end{proposition}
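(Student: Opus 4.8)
The plan is to reduce to canonical form and then bound the three inner angles of~$\trioB$ separately, using Proposition~\ref{angles} to identify two of them as equal and the Farey conditions to control their size. The starting observation is that a conformal equivalence preserves the magnitude of angles between arcs and carries the interior region $\trio$ of one bryophyllum onto that of the other, so it sends each inner angle of $\trioB$ to the corresponding inner angle of the image without changing its value. Since the Edge, Inclusion and Separation conditions of Definition~\ref{def-farey-bryo} are formulated purely through incidences, containments and intersections of the edges and their images, they are conformally invariant. Hence by Theorem~\ref{theorem-clasification} it suffices to prove the statement for a canonical Farey bryophyllum $\Br_{\varphi,\psi}$.

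The angle at~$\An$ then comes for free: by Remark~\ref{rem-canon-bryo} the inner angle between the straight edges $\sp$ and $\sm$ of a canonical bryophyllum is $\pi-\varphi$, which lies in $(0,\pi)$ because $\varphi\in(0,\pi)$. Write $\alpha=\pi-\varphi$ for this angle, and, using Proposition~\ref{angles}, let $\beta$ denote the common value of the two remaining inner angles at $\Ap$ and $\Am$. The core of the argument is to bound $\beta$ at the vertex $\An$ by a wedge-packing argument. Since $\tp$ is conformal with $\tp(\Ap)=\An$ and $\tp(\sn)=\sp$, the image $\tp(\trio)$ meets $\An$ in an angular sector equal to the angle of $\trio$ at $\Ap$, namely $\beta$, and one side of this sector is exactly the edge $\sp$; by the Inclusion Condition this $\beta$-sector sits inside $\trio$, so it is the sub-wedge of the $\alpha$-wedge adjacent to $\sp$. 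Symmetrically $\tm(\trio)$ contributes a $\beta$-sub-wedge adjacent to $\sm$. The Separation Condition $\tp(\trio)\cap\tm(\trio)=\{\An\}$ forces these two sub-wedges to meet only at $\An$, hence not to overlap, and therefore $2\beta\le\alpha=\pi-\varphi<\pi$, giving $\beta<\pi/2$. Consequently all three inner angles are strictly less than $\pi$, which is more than required.

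The main obstacle I expect is making the wedge-packing step fully rigorous rather than merely pictorial. One has to verify that $\tp(\trio)$ genuinely presents itself at $\An$ as an angular sector with $\sp$ as one of its two sides — i.e.\ that the relevant \emph{tangent directions}, and not just the set-theoretic containment $\tp(\trio)\subset\trio$, are aligned so that the sector opens into the interior of $\trio$ from $\sp$ — and similarly for $\tm(\trio)$ from $\sm$. Granting this, the elementary fact that two non-overlapping sub-wedges, each adjacent to a different side of a wedge of opening $\alpha$, satisfy the additivity $2\beta\le\alpha$ closes the argument. By contrast, the reduction to canonical form and the reading-off of $\alpha=\pi-\varphi$ are routine once Theorem~\ref{theorem-clasification} and Remark~\ref{rem-canon-bryo} are available.
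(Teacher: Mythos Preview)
Your argument is correct and takes a genuinely different route from the paper's. The paper does not pass to canonical form at all: it considers the composition $\tp\tm$, which fixes $\Ap$ and sends $\An$ and $\Am$ to interior points of the edges $\sp$ and $\sn$ respectively. Because $\tp\tm(\sp)$ lies on the circline of $\sp$ and $\tp\tm(\sm)$ on that of $\sn$, a reflex inner angle of $\trio$ at $\An$ or at $\Am$ would force $\tp\tm(\trio)$ to cross $\sp$ or $\sn$ at one of those interior edge points, contradicting the Inclusion Condition; the bound at $\Ap$ then follows from Proposition~\ref{angles}. Thus the paper uses only the Inclusion Condition and works for an arbitrary Farey bryophyllum directly.

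Your approach trades this for an appeal to Theorem~\ref{theorem-clasification}, which hands you $\alpha=\pi-\varphi<\pi$ at $\An$ for free via Remark~\ref{rem-canon-bryo}, and your wedge-packing at $\An$ (using both Inclusion and Separation) then yields the sharper conclusion $\beta\le\alpha/2<\pi/2$ rather than merely $\beta\le\pi$. The tangent-alignment worry you raise is not a genuine obstacle: since $\tp(\sn)=\sp$ exactly, $\sp$ is literally one of the two boundary arcs of $\tp(\trio)$ at its vertex $\An$, so the $\beta$-sector has $\sp$ as one side and must open into $\trio$ by Inclusion; likewise for $\tm$ and $\sm$. (An analogous tangent-alignment check is tacitly required in the paper's own argument at the interior points $\tp\tm(\An)$ and $\tp\tm(\Am)$, so you are not being asked to do more work than the paper does.)
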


\begin{proof}
Consider the composition of mappings~$\tp\tm$.
We have
$$
\tp\tm(\Ap)=\Ap;
\quad 
\tp\tm(\An)\in\sp;
\quad 
\tp\tm(\Am)\in\sn.
$$
For Farey bryophylla,
the images of~$\An$ and~$\Am$ are interior points of the arcs. So if the angle at~$\An$ or at~$\Am$ is greater than~$\pi$,
then we have that $\tp\tm(\trio)\not\subset\trio$,
and hence either $\tp(\trio)\not\subset\trio$ or $\tm(\trio)\not\subset\trio$.
Therefore, the angles at~$\An$ and at~$\Am$ cannot exceed~$\pi$.
Since the angles at~$\Am$ and~$\Ap$ coincide,
the angle at $\Ap$ cannot exceed~$\pi$ either.
We have shown that the statement holds for all angles of the bryophyllum.
\end{proof}

\begin{remark}
Note that the property of a conformal bryophyllum to be Farey is preserved under conformal equivalence. 
\end{remark}

The Farey canonical bryophylla are characterised as follows:

\begin{theorem}
\label{theorem-Farey-region}

A canonical bryophyllum $\Br_{\varphi,\psi}$ is Farey if the pair~$(\varphi, \psi)$ satisfies the following system of inequalities:
$$
\left\{
\begin{array}{l}
\sin{\psi}+2\sin\Big(\psi-\frac{\varphi}{2}\Big)\cos\frac{\varphi}{2}\ge0;\\
\sin\psi\cos\varphi-2\sin\Big(\psi-\frac{\varphi}{2}\Big)\cos\frac{\varphi}{2}\ge 0;\\
\varphi + 2\psi <\pi.\\
\end{array}
\right.
$$
\end{theorem}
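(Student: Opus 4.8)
The plan is to verify the three defining requirements of Definition~\ref{def-farey-bryo} — the Edge, Inclusion and Separation Conditions — directly for $\Br_{\varphi,\psi}$, and to recognise each listed inequality as the sharp analytic condition for one of them. First I would collect the data that Definition~\ref{def-canon-bryo} supplies for free. A direct substitution gives $\tp(\An)=re^{i\psi}$, and by the reflection symmetry across $\r$ that interchanges $\tp\leftrightarrow\tm$, $\Ap\leftrightarrow\Am$, $\sp\leftrightarrow\sm$ and fixes $\An$ and $\sn$, also $\tm(\An)=re^{-i\psi}$. Since $\cos(\varphi/2)>0$ for $\varphi\in(0,\pi)$, the third inequality is elementary to interpret:
\[
\varphi+2\psi<\pi \iff \cos(\varphi/2+\psi)>0 \iff r>0 .
\]
This symmetry reduces every statement below to one about the map $\tp$ alone.

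Next I would isolate the one arc that carries all the information. Because $\tp(\sn)=\sp$, and because $\tp(\sm)$ lies on the circle of $\sn$ and joins $\Ap$ to $re^{i\psi}=\tp(\An)\in\sn$, the image $\tp(\sm)$ is exactly the sub-arc of $\sn$ from $\Ap$ to $re^{i\psi}$; hence two of the three edges of $\tp(\trio)$ already lie on $\partial\trio$, and the only edge entering the interior is the image $\tp(\sp)$ of the segment $\sp$, an arc from $\An$ to $re^{i\psi}$. By symmetry the sole interior edge of $\tm(\trio)$ is its mirror image $\overline{\tp(\sp)}$. Consequently: the Edge Condition and the correct placement of $re^{\pm i\psi}$ on $\sn$ follow from $r>0$ (the boundary case $r=0$ is exactly Examples~\ref{example-L} and~\ref{Peano-example}, where $\tp(\An)=\tm(\An)=0$ and Separation fails); the Inclusion Condition becomes $\tp(\sp)\subset\bar\trio$; and, given $r>0$, the Separation Condition becomes the requirement that $\tp(\sp)$ meet $\r$ only at $\An$, so that $\tp(\trio)$ and its reflection $\tm(\trio)$ share only $\An$.

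The heart of the argument is then the direction $\beta$ of the tangent to the circular arc $\tp(\sp)$ at $\An$, which I would compute from $\arg\tp'(\Ap)$ together with the direction $\varphi/2-\pi/2$ of $\sp$ at $\Ap$. At $\An$ the interior of $\trio$ subtends the directions from $\pi/2+\varphi/2$ (the edge $\sp$) to $3\pi/2-\varphi/2$ (the edge $\sm$), in agreement with $\angle(\sp,\sm)=\pi-\varphi$. The local conditions are that the arc enters $\trio$ rather than crossing $\sp$ iff $\beta\ge\pi/2+\varphi/2$, and that it stays in the closed upper half-plane iff $\beta\le\pi$. Carrying out the computation of $\arg\tp'(\Ap)$ and simplifying with Lemma~\ref{lemma-denominator} and the standard product-to-sum identities, I expect $\beta=\pi/2+\varphi/2$ to reduce precisely to $\sin\psi+2\sin(\psi-\varphi/2)\cos(\varphi/2)=0$ (so that the first inequality is the Inclusion Condition) and $\beta=\pi$ to reduce to $\sin\psi\cos\varphi-2\sin(\psi-\varphi/2)\cos(\varphi/2)=0$ (so that the second inequality, together with $r>0$, is the Separation Condition).

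The main obstacle will be passing from these infinitesimal tangent conditions at $\An$ to genuine global containment, since $\tp(\sp)$ is a curved arc that could in principle bulge back across $\sp$, $\sm$, $\sn$ or $\r$ away from $\An$. I would control this using that a M\"obius image of a line segment is a circular arc, so $\tp(\sp)$ is a convex arc lying on one side of its chord and is completely determined by its two endpoints ($\An\in\partial\trio$ and $re^{i\psi}\in\sn$) together with the single tangent datum at $\An$; checking that the supporting circline separates the plane correctly then upgrades the boundary-tangent inequalities to the statements that $\tp(\sp)\subset\bar\trio$ and $\tp(\sp)\cap\r=\{\An\}$ throughout. Assembling the three pieces — Edge and vertex placement from $r>0$ (inequality~3), $\tp(\sp)\subset\bar\trio$ from inequality~1, and $\tp(\sp)\cap\r=\{\An\}$ from inequalities~2 and~3 — shows that $\Br_{\varphi,\psi}$ is Farey whenever the system holds; the same computation, read in reverse, shows that each inequality is also necessary.
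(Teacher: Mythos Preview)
Your reduction to the single arc $\tp(\sp)$ and its tangent direction at $\An$ is a reasonable alternative to the paper's method, but the way you assign the three inequalities to the three Farey conditions is wrong, and this is a genuine gap. The Edge Condition does \emph{not} follow from $r>0$: for example, at $\varphi=\pi/2$, $\psi=0.1$ one has $r>0$, yet the circle carrying $\sn$ has its centre at $x(\psi)\approx-0.15\in(-1,0)$ and the arc $\sn$ cuts the segment $\sp$ in its interior. In the paper's argument the Edge Condition is equivalent to the \emph{first} inequality: the borderline configuration is $\sn$ tangent to the lines $\An\Apm$ at $\Apm$, which by Proposition~\ref{prop-S-minus-one} means the centre of $\sn$ is at $-1$, and solving $x(\psi)=-1$ with Lemma~\ref{lemma-centre-sn-2} gives exactly the first equation. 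In fact your own computation lands on the same thing, just transported through $\tp$: since $\tp$ is conformal and takes the pair $(\sn,\sp)$ at $\Ap$ to the pair $(\sp,\tp(\sp))$ at $\An$, your condition $\beta\ge\pi/2+\varphi/2$ is identical to ``interior angle of $\trio$ at $\Ap$ is at most $\pi$'', i.e.\ the Edge Condition, not Inclusion.

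With Edge coming from the first inequality and Separation from the second and third (this part matches Proposition~\ref{proposition-Farey-separation}), the Inclusion Condition still needs its own argument: you must rule out $\tp(\sp)$ leaving $\trio$ across $\sm$ or across $\sn\setminus\tp(\sm)$, and the convexity sketch does not do this. The paper handles it in Proposition~\ref{proposition-Farey-inclusion} by observing that on the boundary $\psi=\psi_2$ the angle $\angle(\sp,\tp(\sp))$ equals half the apex angle $\angle(\sp,\sm)$, so the arc cannot reach $\sm$; for $\sn$ one uses that the angles of $\trio$ do not exceed $\pi$. More generally, the paper's route is different from yours: rather than computing $\tp'$, it tracks the real centre $x(\psi)$ of the circle of $\sn$, shows it is monotone in $\psi$ (Lemma~\ref{lemma-centre-sn-2}), and compares with two explicit reference circles through $\Apm$ --- the one centred at $-1$ tangent to $\An\Apm$ and the one centred at $1/\cos\varphi$ tangent to $O\Apm$ (Propositions~\ref{prop-S-minus-one}, \ref{prop-S-one-over-cos}) --- which makes the local-to-global passage immediate.
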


\noindent
We will prove this theorem later in Subsection~\ref{theorem-Farey-region-proof}.


\subsection{Configuration Space of Canonical Bryophylla}
\label{subsection-config}

The square in Figure~\ref{pic1} with coordinates $(\varphi,\psi)$ represents the configuration space of canonical bryophylla $\Br_{\varphi,\psi}$.
We will describe some important components of the configuration space.

\begin{figure}[t]
\centering
\includegraphics[width=6cm]{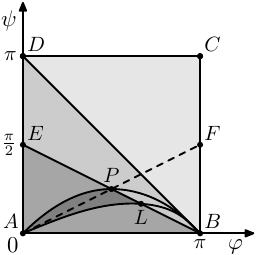}
\caption{Configuration Space of Canonical Bryophylla.}
\label{pic1}
\end{figure}

\bigskip
\noindent
{\it Interesting points:}
\begin{itemize}
\item Point $P$ with coordinates $(\varphi,\psi)=(\pi/2,\pi/4)$ corresponds to Example~\ref{Peano-example} (Peano bryophyllum).
\item Point $L$ with coordinates $(\varphi,\psi)=(2\pi/3, \pi/6)$ corresponds to Example~\ref{example-L}.
\end{itemize}

\bigskip
\noindent
{\it Curves and segments:}
\begin{itemize}
\item
The segment $BE$ corresponds to $r=0$.
\item
The segment $BD$ is excluded in Definition~\ref{def-canon-bryo} and would correspond to $r=-1$.
\item
The segments $AB$ and $CD$ correspond to $r=1$.
These are the configurations for which $\An$ is a fixed point for both mappings $\tm$ and $\tp$,
and so the corresponding maps are not bijective.
We do not consider such cases.
\item
The segments $AD$ and $BC$ correspond to $\varphi=0$ and $\varphi=\pi$ respectively.
Here $\Am=\Ap$, and hence we do not consider such cases.
\item
The curve through $A$, $L$ and $B$ is defined by the first inequality in Theorem~\ref{theorem-Farey-region} and corresponds to the case when the arcs~$\sp$ and~$\sm$ are tangent to~$\sn$.

\item
The curve through~$A$, $P$ and~$B$ is defined by the second inequality in Theorem~\ref{theorem-Farey-region} and corresponds to the case when the arcs~$\tp(\sp)$ and~$\tm(\sm)$ are tangent to each other at the point~$\An$.

\item
Segment~$AF$ is defined by~$\psi=\varphi/2$
and corresponds to affine bryophylla
(see Proposition~\ref{equation-affine}).
The point with coordinates $(\varphi,\psi)=(2\pi/3,\pi/3)$ (Example~\ref{example-trio-symm}) lies on the intersection of the segments~$BD$ and~$AF$.
\end{itemize}

\bigskip
\noindent
{\it Various regions:}
\begin{itemize}
\item
The curvilinear triangle~$APL$ consists of Farey canonical bryophylla described in Theorem~\ref{theorem-Farey-region},
with the first, second, and third inequality corresponding to the sides $AL$, $AP$, and $PL$ respectively.
\item
The triangular region~$BCD$ corresponds to $r<-1$.
We do not consider such cases as the mappings $\tpm$ do not send the curvilinear triangle~$\An\Ap\Am$ into itself. 
\item
The region bounded by the segment~$AB$ and the curve~$ALB$ corresponds to the case when the sides of the curvilinear triangle~$\An\Ap\Am$ do not bound a domain.
\item
The region bounded by the segment~$AB$ and the curve~$APB$ corresponds to the case when $\tm(\sp)\cup\tp(\sm)=\{\An\}$.
This region includes the curvilinear triangle~$APL$.
\item
The curvilinear region~$APE$ contains canonical bryophilla with self-intersections.
\item
The triangle~$BED$ corresponds to the case where the images~$\tp(\sm)$ and~$\tm(\sp)$ have an arc in common.
This region is not of interest to us either.
\end{itemize}

\section{Structural Properties of Bryophylla}
\label{Section-on-convergency}

In this section we discuss some dynamical features of Farey bryophylla. 
We will focus on the case of canonical Farey bryophylla but the statements are true for any conformal bryophyllum as it is conformally equivalent to a canonical one.

Let $B=(\An,\Ap,\Am,\sp,\sm,\sn,\tp,\tm)$ be a canonical Farey bryophyllum.
Let $\trioB$ be the triangle bounded by circlines~$\sp$, $\sm$, $\sn$.

We start in Subsection~\ref{section-fractal-properties}
with convergence properties of nested images of~$\trioB$. 
Later in Subsection~\ref{limit-param} we consider the
limiting invariant sets and describe their natural parametrisations.
Finally in Subsections~\ref{Subsection-Farey-Coordinate} and~\ref{Subsection-Gauss-Map}
we say a few words about a natural ergodic measure defined on Farey bryophylla.

\subsection{Convergence Property of Bryophylla}
\label{section-fractal-properties}

In this subsection we study the images of~$\trioB$
and their convergence.
We will start by introducing a notation that allows us to enumerate all $n$-th iterates of~$\trioB$
by binary sequences of length~$n$
in such a way that the sequences
from~$(1,\dots,1)$ to~$(0,\dots,0)$
in the lexicographic order
correspond to the iterates of~$\trioB$
from top to bottom, i.e.\ from $\Ap$ to $\Am$. 

\begin{definition}
\label{def-nested-trio}
For any binary sequence $x$ we define 
a curvilinear triangle $\trioB(x)$ using the following iterative rules:

\begin{itemize}
\item 
$\De_{B}()=\trioB$;
\item
For $x=(x_1,\dots,x_n)$ with $n\ge 1$
we set
$$
\De_{B}(x_1,\dots,x_{n})=
\left\{
\begin{array}{l}
(\tp\circ\conj)\big(\trioB(x_2,\dots,x_{n})\big)
\quad\hbox{if}~x_{1}=1;\\
(\tm\circ\conj)\big(\trioB(x_2,\dots,x_{n})\big)
\quad\hbox{if}~x_{1}=0,\\
\end{array}
\right.
$$
where $\conj$ is the complex conjugation. \\
(For a general conformal bryophyllum, $\conj$ is the conformal reflection with respect to the line of symmetry of the bryophillum. For a canonical bryophyllum, the line of symmetry coincides with the real line.) 
\end{itemize}
\end{definition}

Let us state the following convergence result:
\begin{theorem}
\label{convergence-triangles}
For any infinite binary sequence $x=(x_k)_{k=1}^\infty$,
the sequence of triangles $\trioB(x_1,\dots,x_n)$
as defined in Definition~\ref{def-nested-trio}
is nested, i.e.
\[\trioB(x_1,\dots,x_n)\subset\trioB(x_1,\dots,x_n,x_{n+1}),\]
and their intersection 
\[\bigcap_{n=1}^{\infty}\trioB(x_1,\dots,x_n).\]
is a one-point set.
We denote the only point in this set by ${\dot\trio}_B(x)$.
\end{theorem}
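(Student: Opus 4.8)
The plan is to reduce to the canonical case. By Theorem~\ref{theorem-clasification} and the fact that the Farey property is preserved under conformal equivalence, every Farey bryophyllum is conformally equivalent to a canonical one; since the equivalence is a homeomorphism it carries nested intersections to nested intersections, so it suffices to treat a canonical Farey bryophyllum $B$. For such $B$ the line of symmetry is the real line, hence $\conj(\trioB)=\trioB$ and $\conj\circ\tpm\circ\conj=\tmp$. Writing $M_1=\tp\circ\conj$ and $M_0=\tm\circ\conj$, the recursion of Definition~\ref{def-nested-trio} unfolds to
\[
\trioB(x_1,\dots,x_n)=M_{x_1}\circ M_{x_2}\circ\cdots\circ M_{x_n}(\trioB).
\]
Since $M_1(\trioB)=\tp(\conj(\trioB))=\tp(\trioB)$ and likewise $M_0(\trioB)=\tm(\trioB)$, the Inclusion Condition gives $M_{x_{n+1}}(\trioB)\subseteq\trioB$; applying the homeomorphism $M_{x_1}\circ\cdots\circ M_{x_n}$ to both sides yields
\[
\trioB(x_1,\dots,x_n,x_{n+1})\subseteq\trioB(x_1,\dots,x_n),
\]
which is the nesting asserted in the theorem, with the inclusion running from the longer word to the shorter one.

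Passing to closures, the sets $\overline{\trioB(x_1,\dots,x_n)}$ form a decreasing sequence of nonempty compact subsets of $\hat{\c}$, so their intersection is nonempty; I would identify ${\dot\trio}_B(x)$ with this intersection and prove it is a single point. For this it suffices to show $\operatorname{diam}\trioB(x_1,\dots,x_n)\to0$ as $n\to\infty$. Here I would use conformality: each $\trioB(x_1,\dots,x_n)$ is again a curvilinear triangle whose three inner angles equal those of $\trioB$ (conformal maps preserve angles and $\conj$ preserves their magnitude), and by the proposition preceding Theorem~\ref{theorem-Farey-region} these angles are bounded. Consequently the diameter of such a triangle is comparable to the mutual distances of its three vertices, namely the images of $\An$, $\Ap$, $\Am$ under $M_{x_1}\circ\cdots\circ M_{x_n}$, so it is enough to show that these three image-vertices coalesce.

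The coalescence is a contraction statement and is the heart of the argument. I would produce an open set $\Omega$ with $\overline{\trioB}\subset\Omega$ and $\tpm(\Omega)\subseteq\Omega$, so that by the Schwarz--Pick lemma each $\tpm$ is non-expanding in the hyperbolic metric $\rho_\Omega$, and then upgrade this to a uniform contraction on the compactly contained set $\overline{\trioB}$, using the Separation Condition to see that the two image triangles lie strictly inside $\trioB$ apart from the single shared vertex $\An$. The main obstacle, and where I expect the genuine work to lie, is that the images $\tpm(\trioB)$ touch the boundary of $\trioB$ at the vertices (for instance $\tp(\Ap)=\An$ and $\tp(\Am)=\Ap$), so no single global contraction constant is available there; this is exactly the parabolic-cusp phenomenon familiar from the classical Farey case, where convergence along a constant word is only polynomial. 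I would resolve it by splitting the sequences: for words that are eventually constant the tail is the iteration of a single map, whose iterates $\tpm^{\,m}(\trioB)$ shrink to its unique fixed point in $\overline{\trioB}$ by the standard dynamics of a M\"obius map carrying a region into itself, while for words that switch between the two symbols infinitely often the interleaving of $\tp$ and $\tm$ (made explicit by pushing all conjugations to the right via $\conj\circ\tpm=\tmp\circ\conj$, turning the composition into an alternating product of $\tp$'s and $\tm$'s) keeps the orbit away from any single cusp and restores a uniform block-contraction. Combining the two cases gives $\operatorname{diam}\trioB(x_1,\dots,x_n)\to0$, and therefore the nested intersection consists of exactly one point.
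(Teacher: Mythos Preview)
Your nesting argument is correct and matches the paper's (it is Lemma~\ref{lemma-nested-trioB}(a)/Corollary~\ref{corollary-nested}(a), proved exactly as you do, via the Inclusion Condition and precomposition).

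For the one-point statement, however, your plan diverges completely from the paper's and, as written, has a real gap. You announce an open set $\Omega$ with $\overline{\trioB}\subset\Omega$ and $\tpm(\Omega)\subseteq\Omega$ in order to invoke Schwarz--Pick, but you never construct it, and it is not clear such an $\Omega$ exists: the maps $\tpm$ send boundary edges of $\trioB$ onto other boundary edges (e.g.\ $\tp(\sn)=\sp$, $\tp(\sm)\subset\sn$), so a slight enlargement of $\trioB$ on the $\sn$-side is carried to a neighbourhood of $\sp$, which need not lie in $\Omega$. The cusp-splitting you sketch is also not quite aligned with the recursion: after pushing all the $\conj$'s through, an eventually constant word $(1,1,1,\dots)$ becomes the iteration of $\tp\tm$ (not of $\tp$), with fixed point $\Ap$; you would still have to establish that $(\tp\tm)^m(\trioB)$ shrinks to $\Ap$, which is again a nontrivial dynamical statement about a specific M\"obius map and is not covered by ``standard dynamics'' without knowing its type. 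The ``block contraction'' for switching words is likewise left as a slogan.

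The paper avoids all of this by a purely circle-geometric argument. It passes to the circumscribed discs $D_i$ of the triangles $T_i=\trioB(x_1,\dots,x_i)$ and shows (Lemma~\ref{lemma-nested-trioB}(f)) that $S_i$ and $S_{i+2}$ are \emph{tangent} at the single shared vertex of $T_i$ and $T_{i+2}$, whence $D_{i+2}\subset D_i$. Thus the even and odd discs are nested and converge to limit discs $\Deven,\Dodd$. Since the angle $\angle(S_i,S_{i+1})$ is conformally invariant and hence constant in $i$, the boundary circles $\Seven,\Sodd$ meet in one or two points. The two-point case is then ruled out (Proposition~\ref{OnePoint}) by a cross-ratio argument: the four vertices of consecutive triangles have only finitely many possible cross-ratios (by conformal invariance there are essentially two configurations), none equal to zero, which is incompatible with three of the four points accumulating in one $\varepsilon$-disc and one in the other. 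Only at the very end does a short eventually-constant tail appear, and there it is disposed of by observing that the relevant vertices lie on $\sn\cap\sp$. This route never needs an invariant hyperbolic domain or any contraction constant.
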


\noindent
We will prove this theorem later in Subsection~\ref{convergence-triangles-proof}.


\subsection{Limiting Invariant Sets and Their Parametrisations}
\label{limit-param}

\begin{definition}
The {\it limiting invariant set}~$\calD_B$ of~$B$ is
defined as
\[
\calD_B=\bigcap_{k=0}^\infty
\left(\bigcup_{x\in\{0,1\}^k}\trioB(x)\right).
\]  
\end{definition}

It is interesting to note that the decimal representations of rational numbers together with Theorem~\ref{convergence-triangles} deliver a natural mapping of the segment $[0,1]$ to the invariant limiting set $\calD_B$. In order to get this map it is sufficient to prove the following statement.

\begin{proposition}
\label{eQal}
\vspace{2mm}
Denote by $(x_1,\ldots, x_k,\overline{x_{k-1}, \ldots, x_n})$
the infinite eventually periodic sequence 
with a pre-period  $(x_1,\dots,x_k)$
and a period $(x_{k-1}, \ldots, x_n)$.
Then
$$
\trioB(x_1,\dots,x_n,0,\overline{1})=
\trioB(x_1,\dots,x_n,1,\overline{0}).
$$
\end{proposition}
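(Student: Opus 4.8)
The plan is to interpret $\trioB(\cdot)$ on an infinite binary string as the one-point set~${\dot\trio}_B(\cdot)$ provided by Theorem~\ref{convergence-triangles}, and then to reduce the identity to the two purely periodic strings $\overline{1}$ and~$\overline{0}$. First I would record a recursion for limit points. Writing $T_1=\tp$ and $T_0=\tm$, the rule of Definition~\ref{def-nested-trio} reads $\trioB(x_1,\dots,x_n)=(T_{x_1}\circ\conj)\bigl(\trioB(x_2,\dots,x_n)\bigr)$ for every~$n\ge1$. As a M\"obius map composed with the reflection~$\conj$, the map $T_{x_1}\circ\conj$ is a homeomorphism of the Riemann sphere, hence injective, so it commutes with nested intersections; intersecting the displayed equality over all~$n$ gives
\[
{\dot\trio}_B(x_1,x_2,\dots)=(T_{x_1}\circ\conj)\bigl({\dot\trio}_B(x_2,x_3,\dots)\bigr).
\]

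Applying this recursion $n$ times would peel off the common prefix $x_1,\dots,x_n$, turning both sides of the claim into the image of a single point under the same map $G=(T_{x_1}\circ\conj)\circ\dots\circ(T_{x_n}\circ\conj)$. It therefore suffices to treat the base case ${\dot\trio}_B(0,\overline{1})={\dot\trio}_B(1,\overline{0})$, since applying $G$ to two equal points yields equal results.

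For the base case I would first locate the limit points of the periodic strings. The reflection~$\conj$ fixes~$\An$ and interchanges~$\Ap$ and~$\Am$ (for a canonical bryophyllum this is complex conjugation, which fixes $\An=1$ and swaps $e^{\pm i\varphi}$); together with $\tp(\Am)=\Ap$ and $\tm(\Ap)=\Am$ from Definition~\ref{def-conf-bryo}, this shows that $\Ap$ is a fixed point of $\tp\circ\conj$ and~$\Am$ a fixed point of $\tm\circ\conj$. Since $\trioB(\underbrace{1,\dots,1}_{n})=(\tp\circ\conj)^n(\trioB)$ then contains~$\Ap$ for every~$n$, the fixed point~$\Ap$ lies in all of these nested triangles, and as their intersection is a single point by Theorem~\ref{convergence-triangles} we obtain ${\dot\trio}_B(\overline{1})=\Ap$; symmetrically ${\dot\trio}_B(\overline{0})=\Am$. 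Feeding these values into the recursion yields
\[
{\dot\trio}_B(0,\overline{1})=(\tm\circ\conj)(\Ap)=\tm(\Am)=\An
\quad\text{and}\quad
{\dot\trio}_B(1,\overline{0})=(\tp\circ\conj)(\Am)=\tp(\Ap)=\An,
\]
so both base-case limits equal~$\An$, and applying~$G$ completes the reduction.

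I expect the only delicate point to be the identification ${\dot\trio}_B(\overline{1})=\Ap$: it relies on the triangles $\trioB(x)$ being taken together with their vertices, so that the fixed point truly belongs to every nested triangle rather than merely to its boundary (with open triangles the literal intersection is empty and one must instead pass to closures). The remaining inputs — the swap action of~$\conj$ on the vertices and the boundary relations for~$\tpm$ — are read off directly from the definitions.
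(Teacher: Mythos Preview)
Your proof is correct and follows essentially the same approach as the paper: both identify a common vertex lying in every triangle of the two nested sequences and then invoke Theorem~\ref{convergence-triangles} to conclude that this vertex is the unique limit point of each. The paper simply asserts ``by construction'' that such a shared vertex~$V$ exists among the vertices of~$\trioB(x_1,\dots,x_n)$, whereas you make this explicit by establishing the recursion, peeling off the prefix, and computing the base-case limit to be~$\An$.
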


\begin{proof}
By construction, all the triangles
$$
\trioB(x_1,\dots,x_n,0,1,\ldots,1)
\quad\hbox{and}
\quad
\trioB(x_1,\dots,x_n,1,0,\ldots,0)
$$
contain a common vertex $V$
which is in fact one of the vertices of~$\trioB(x_1,\dots,x_n$).
Hence by Theorem~\ref{convergence-triangles} we have
$$
  {\dot\trio}_B(x_1,\dots,x_n,0,\overline{1})
  =V
  ={\dot\trio}_B(x_1,\dots,x_n,1,\overline{0}).
  \qedhere
$$
\end{proof}

\noindent
For $\al\in[0,1]$,
let $\hat\al$ be an infinite sequence of digits of a binary expansion of~$\al$. 

\begin{definition}
\label{defPi}
Let the map $\Pi_B:[0,1]\to \calD_B$ 
be given by $\Pi_B(\al)={\dot\trio}_B(\hat\al)$.
\end{definition}

\begin{remark}
Note that we have two different infinite expansions
for rational numbers of the form~$p/10^k$.
However, Proposition~\ref{eQal} shows that these two expansions lead to the same result in Definition~\ref{defPi}.
\end{remark}

\noindent
Theorem~\ref{convergence-triangles} implies the following result:

\begin{corollary}
The map $\Pi_B$ is continuous.
\qed
\end{corollary}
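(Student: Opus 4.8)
The plan is to realise $\Pi_B$ as a factor of a genuinely continuous map defined on the full symbol space, and then invoke the quotient–map property. Write $\Sigma=\{0,1\}^{\infty}$ with the product topology, so that $\Sigma$ is compact and metrisable, and define $\Phi\colon\Sigma\to\c$ by $\Phi(\sigma)={\dot\trio}_B(\sigma)$, the single point furnished by Theorem~\ref{convergence-triangles}. Let $\pi\colon\Sigma\to[0,1]$ be the binary value map $\pi(\sigma)=\sum_{k\ge1}\sigma_k2^{-k}$. Since $\pi$ is a continuous surjection between compact Hausdorff spaces it is closed, hence a quotient map. Consequently, continuity of $\Pi_B$ will follow once I check two things: that $\Phi$ is continuous, and that $\Phi$ is constant on the fibres of $\pi$. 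Indeed, the second property lets $\Phi$ descend to a well-defined map on $[0,1]$, which is continuous by the universal property of $\pi$, and which coincides with $\Pi_B$ because $\Phi(\hat\al)={\dot\trio}_B(\hat\al)=\Pi_B(\al)$.

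Constancy on fibres is immediate from Proposition~\ref{eQal}. The only non-trivial fibres of $\pi$ are the pairs of binary expansions of a dyadic rational, namely the sequences $(x_1,\dots,x_n,0,\overline{1})$ and $(x_1,\dots,x_n,1,\overline{0})$, and Proposition~\ref{eQal} asserts exactly that $\Phi$ takes the same value (the common vertex $V$) on these two. So this half of the argument requires no new work.

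The analytic heart of the proof is the continuity of $\Phi$, and this rests on a \emph{uniform} shrinking estimate
\[
  d_n:=\max_{x\in\{0,1\}^{n}}\mathrm{diam}\,\trioB(x)\longrightarrow 0
  \qquad(n\to\infty).
\]
Once this is known, continuity of $\Phi$ is automatic: if $\sigma,\sigma'\in\Sigma$ agree in their first $n$ entries, then by the nested structure of Definition~\ref{def-nested-trio} both ${\dot\trio}_B(\sigma)$ and ${\dot\trio}_B(\sigma')$ lie in the common triangle $\trioB(\sigma_1,\dots,\sigma_n)$, so $|\Phi(\sigma)-\Phi(\sigma')|\le d_n$, yielding a uniform modulus of continuity. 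For each \emph{fixed} sequence, the decreasing, compact, nested triangles $\trioB(\sigma_1,\dots,\sigma_n)$ have singleton intersection by Theorem~\ref{convergence-triangles}, whence $\mathrm{diam}\,\trioB(\sigma_1,\dots,\sigma_n)\to0$; upgrading this pointwise decay to the uniform bound $d_n\to0$ is the step I expect to be the main obstacle.

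I would settle the uniform bound by a compactness argument on $\Sigma$. Suppose $d_n\not\to0$; then there are $\varepsilon>0$, indices $n_k\to\infty$, and strings of length $n_k$ whose triangles have diameter at least $\varepsilon$. Extending each string arbitrarily to an element of $\Sigma$ and passing, by compactness of $\Sigma$, to a convergent subsequence $\sigma^{(k)}\to\sigma^\ast$, I note that for every fixed $m$ the strings $\sigma^{(k)}$ eventually agree with $\sigma^\ast$ on their first $m$ symbols. The decreasing nesting then gives $\trioB(\sigma^{(k)}_1,\dots,\sigma^{(k)}_{n_k})\subset\trioB(\sigma^\ast_1,\dots,\sigma^\ast_m)$ for such $k$ (provided $n_k\ge m$), forcing $\varepsilon\le\mathrm{diam}\,\trioB(\sigma^\ast_1,\dots,\sigma^\ast_m)$ for all $m$. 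This contradicts the per-sequence shrinking already established for $\sigma^\ast$, closing the gap and completing the proof.
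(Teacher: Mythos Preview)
Your proof is correct. The paper itself supplies no argument at all---the corollary is stated with a bare \qed as an immediate consequence of Theorem~\ref{convergence-triangles}---so your factorisation through the symbol space $\{0,1\}^{\infty}$ together with the compactness upgrade from pointwise to uniform shrinking is exactly the kind of argument one writes down to make that implication honest, and every step (quotient-map property of $\pi$, constancy on fibres via Proposition~\ref{eQal}, the subsequence argument for $d_n\to0$) is carried out cleanly.
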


\subsection{The Farey Coordinate}
\label{Subsection-Farey-Coordinate}



Recall that the {\it Farey sum\/} of two non-negative rational numbers~$\frac{p}{q}$ and~$\frac{r}{s}$ with co-prime non-negative numerators and denominators is defined by
\[\frac{p}{q}\oplus\frac{r}{s}=\frac{p+q}{r+s}.\]
Here we represent~$0$ by~$\frac{0}{1}$.

We will use the following notation:
$\z_{\ge0}=\{n\in\z\st n\ge0\}$.

\begin{definition}
The {\it Farey coordinate\/}~$\eta$ on $[0,1]$ is a function defined iteratively as follows:
\begin{itemize}
\item $\eta(0)=\frac{0}{1},\quad\eta(1)=\frac{1}{1}$.
\item $\eta$ is defined on the points of the form 
$\frac{m}{2^n}$ with~$m,n\in\z_{\ge0}$, $m\ge1$, $m$ odd, as
\[
  \eta\left(\frac{m}{2^{n}}\right)
  =\eta\left(\frac{m-1}{2^n}\right)
  \oplus\eta\left(\frac{m+1}{2^n}\right).
\]  
\end{itemize}
It is clear that $\eta$ is monotone on the set of finite binary rational numbers and can therefore be extended to the whole interval~$[0,1]$ as follows:
\begin{itemize}
\item
For a real number~$\al\in[0,1]$, let 
\[
  \eta(\al)
  =\sup
  \left\{
        \eta\left(\frac{m}{2^n}\right)
        \,\Big|\,
        \frac{m}{2^n}\le\al,~m,n\in\z_{\ge0}
  \right\}.
\]
\end{itemize}
\end{definition}

\begin{remark}
It is clear that the function $\eta$ is continuous on $[0,1]$.
\end{remark}


\noindent
There is a nice formula for the Farey coordinate via so-called LR-decomposition of binary sequences.

\begin{definition}
\label{def-Farey-coord}
Consider $\al\in[0,1)$.
Let $x(\al)$ be the sequence of all digits in an infinite binary expansion of~$\al$ starting with~$0$ for the integer part of $\al$.
If $x(\al)$ starts with $a_1$~zeroes, followed by $a_2$~ones, followed by $a_3$~zeroes and so on then the expression
\[L^{a_1}R^{a_2}L^{a_3}R^{a_4}L^{a_5}\ldots\]
is called an {\it LR-decomposition\/} of~$\al$.
In the case of the sequence~$x(\al)$ having an infinite tail of ones or zeroes, the last~$a_k$ is set to~$\infty$.
\end{definition}

\noindent
We have the following result (equivalent to Theorem~27.25 in~\cite{Karpenkov-book}):

\begin{proposition}
\label{cf-propos}
Consider $\al\in[0,1)$.
Let $L^{a_1}R^{a_2}L^{a_3}\dots$.
be an LR-sequence for $x(\al)$.
Then the Farey coordinate of~$\al$ can be expressed
as a continued fraction:
\[\eta(\al)=[0;a_1:a_2: a_3:\dots].\]
If $x(\al)$ ends with an infinite tail of ones or zeros,
we formally write
\[[0;a_1:a_2:\ldots:a_n:\infty]=[0;a_1:a_2:\ldots:a_n].\qed\]
\end{proposition}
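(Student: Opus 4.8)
The plan is to encode the Farey mediant (Stern--Brocot) construction defining $\eta$ by $2\times2$ integer matrices and then to recognise the continued fraction inside the resulting products. I read a finite binary string $b_1\dots b_n$ as a path in the dyadic subdivision of $[0,1]$ (bit $0$ = left half, bit $1$ = right half) and attach to it the matrix $M(b_1\dots b_n)=M_0\,\mathbf X_{b_1}\cdots\mathbf X_{b_n}$, where $M_0=\begin{pmatrix}0&1\\1&1\end{pmatrix}$, $\mathbf X_0=\mathbf L=\begin{pmatrix}1&1\\0&1\end{pmatrix}$ and $\mathbf X_1=\mathbf R=\begin{pmatrix}1&0\\1&1\end{pmatrix}$. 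A matrix $\begin{pmatrix}p&r\\q&s\end{pmatrix}$ is read as the M\"obius map $z\mapsto\frac{pz+r}{qz+s}$, so its columns give the two endpoint fractions ($z=\infty$ and $z=0$) while its value at $z=1$ is the mediant $\frac{p+r}{q+s}$.

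First I would prove, by induction on $n$, that the columns of $M(b_1\dots b_n)$, read as fractions, are the values of $\eta$ at the two endpoints of the corresponding dyadic interval, and hence that $\eta$ of its midpoint is the mediant, i.e.\ the value of $M(b_1\dots b_n)$ at $z=1$. The inductive step is immediate: passing to a child interval multiplies $M$ on the right by $\mathbf L$ or $\mathbf R$, which replaces one column by the column sum; this is exactly the recursion $\eta(m/2^n)=\eta(\frac{m-1}{2^n})\oplus\eta(\frac{m+1}{2^n})$, and $\det\mathbf L=\det\mathbf R=1$ keeps neighbouring fractions unimodular, hence coprime.

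The heart of the argument is a telescoping identity. Writing $J=\begin{pmatrix}0&1\\1&0\end{pmatrix}$ and $C(a)=\begin{pmatrix}a&1\\1&0\end{pmatrix}$ (so $C(a)\colon z\mapsto a+1/z$), one checks $\mathbf L^{a}=C(a)J$, $\mathbf R^{a}=J\,C(a)$ and $M_0=J\mathbf L$. The LR-decomposition $L^{a_1}R^{a_2}L^{a_3}\cdots$ of $x(\al)=(0,b_1,b_2,\dots)$ means $b_1b_2\cdots=0^{a_1-1}1^{a_2}0^{a_3}\cdots$, the forced leading $0$ being precisely what lowers the first exponent to $a_1-1$. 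Substituting these identities and using $J^2=\Id$, the product collapses:
\[
M_0\,\mathbf L^{a_1-1}\mathbf R^{a_2}\mathbf L^{a_3}\cdots\mathbf X^{a_k}
= J\,C(a_1)C(a_2)\cdots C(a_k)\,J^{\epsilon},
\]
with a harmless trailing $J^{\epsilon}$. Since $J$ fixes $1$ and $J\colon w\mapsto 1/w$, evaluating at $z=1$ gives $1/[a_1;a_2:\dots:a_k+1]=[0;a_1:a_2:\dots:a_k+1]$; the extra $+1$ is the contribution of the still-open last run and reconciles the dyadic case via the identity $[\dots:a_k+1]=[\dots:a_k:1]$.

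Finally I would pass to the limit. For general $\al$ the midpoints of the nested dyadic intervals containing $\al$ converge to $\al$, so by continuity of $\eta$ and convergence of the continued-fraction convergents, $\eta(\al)=\lim_n M(b_1\dots b_n)(1)=[0;a_1:a_2:\dots]$. The convention $a_{\mathrm{last}}=\infty$ for an eventual tail of equal digits, together with the two binary expansions of a dyadic rational, is handled by monotonicity of $\eta$, exactly as Proposition~\ref{eQal} handles the analogous coincidence for the limit points $\dot\trio_B$. I expect the only genuine difficulty to be bookkeeping: aligning the three sources of off-by-one --- the forced leading $0$ (making $a_1=c_1+1$ for the first fractional run length $c_1$), the dyadic midpoint sitting at path-length $n-1$, and the terminal-run convention --- so that run $i$ of $x(\al)$ yields exactly the partial quotient $a_i$. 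The matrix telescoping is what keeps this alignment manageable.
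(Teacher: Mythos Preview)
The paper does not actually prove this proposition: it is stated with a trailing \qed\ and attributed as ``equivalent to Theorem~27.25 in~[Ka2]''. So there is no proof in the paper to compare against.

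Your approach is correct and is the standard way to establish this link between the Stern--Brocot mediant construction and continued fractions. The matrix identities you use all check out: $\mathbf L^{a}=C(a)J$, $\mathbf R^{a}=JC(a)$, $M_0=J\mathbf L$, and hence $M_0\mathbf L^{a_1-1}\mathbf R^{a_2}\mathbf L^{a_3}\cdots = J\,C(a_1)C(a_2)\cdots C(a_k)\,J^{\epsilon}$ with $\epsilon\equiv k\pmod 2$. Since $J$ fixes $1$, evaluating at $z=1$ gives $[0;a_1:a_2:\dots:a_k+1]$, and the $+1$ is indeed absorbed by the identity $[\dots:a_k+1]=[\dots:a_k:1]$ or disappears in the limit. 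The inductive claim that the columns of $M(b_1\dots b_n)$ record $\eta$ at the endpoints of the dyadic interval is exactly the mediant recursion defining~$\eta$ (note the paper's displayed formula $\frac{p}{q}\oplus\frac{r}{s}=\frac{p+q}{r+s}$ is evidently a typo for $\frac{p+r}{q+s}$, which is what you correctly use). The passage to the limit via continuity of~$\eta$ and convergence of continued-fraction convergents is routine, and your handling of the tail-of-equal-digits case is adequate. The only work left is, as you say, the bookkeeping, and your identification of the three off-by-one sources is accurate.
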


\begin{example}
Consider~$\al=1/4$.
From Definition~\ref{def-Farey-coord} we obtain the following value of the Farey coordinate~$\eta(\al)$ of~$\al$:
\begin{align*}
\eta(\al)=\eta\left(\frac{1}{4}\right)
&=
\eta\left(\frac{1-1}{4}\right)\oplus 
\eta\left(\frac{1+1}{4}\right)=
\eta(0)\oplus\eta\left(\frac{1}{2}\right)\\
&=\eta(0)\oplus
\left(\eta\left(\frac{1-1}{2}\right)\oplus \eta\left(\frac{1+1}{2}\right)
\right)
=\eta(0)\oplus\big(\eta(0)\oplus\eta(1)\big)\\
&=\frac{0+0+1}{1+1+1}=\frac{1}{3}.
\end{align*}
On the other hand, we have two binary expansions for~$\al=1/4$,
\[\al=0.01\bar 0=0.00\bar 1.\]
The corresponding infinite LR-sequences are
$L^2R^1L^\infty$ and $L^3R^{\infty}$.
From these two LR-sequences for~$\al$ we obtain two continued fraction expansions for~$\eta(\al)$
using Proposition~\ref{cf-propos}:
\[
  \eta(\al)=[0;2:1:\infty]=[0;2:1]=\frac{1}{3}
  \quad\text{and}\quad
  \eta(\al)=[0;3:\infty]=[0;3]=\frac{1}{3}.
\]
We note that both continued fraction expansions lead to the same value of~$\eta(\al)$.
\end{example}

\begin{proposition}
The Farey coordinate is a continuous monotone bijective map of the segment $[0,1]$ to itself.
\qed
\end{proposition}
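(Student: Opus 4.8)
The plan is to reduce everything to the behaviour of $\eta$ on the dyadic rationals $m/2^n$, where $\eta$ is given explicitly by iterated Farey sums (mediants), and then to propagate the three properties to all of $[0,1]$ through the sup-extension. The engine of the argument is the classical Stern--Brocot/Farey structure. I would show that at each dyadic level the consecutive values of $\eta$ are \emph{Farey neighbours}, and that the mediant of two Farey neighbours is inserted strictly between them. Concretely, if $b<b'$ are adjacent dyadics at level $n$ (so $b'-b=2^{-n}$) and $\eta(b)=p/q$, $\eta(b')=r/s$ in lowest terms, I claim $qr-ps=1$. This I prove by induction on $n$: the inductive step is precisely the observation that if $qr-ps=1$ then the mediant $\frac{p+r}{q+s}$ is already reduced, since $\gcd(p+r,q+s)$ divides $q(p+r)-p(q+s)=qr-ps=1$, and it satisfies $q(p+r)-p(q+s)=1=(q+s)r-(p+r)s$, so it is again a Farey neighbour of both $p/q$ and $r/s$ and lies strictly between them, $\frac{p}{q}<\frac{p+r}{q+s}<\frac{r}{s}$.

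From this invariant two facts follow at once. First, $\eta$ is \emph{strictly} increasing on the dyadics, because each newly inserted value lies strictly between its neighbours (and in particular the iterative values agree with the sup-extension on dyadics). Second, the set of values $\{\eta(m/2^n)\}$ is exactly the set of reduced rationals in $[0,1]$: every such rational occurs (surjectivity of the Stern--Brocot enumeration, proved by induction on the denominator, each reduced fraction being the mediant of a unique pair of neighbours with strictly smaller denominators), so the image of $\eta$ on dyadics is dense in $[0,1]$. The continued-fraction formula of Proposition~\ref{cf-propos} gives an alternative route to the same conclusion, identifying $\eta$ with the inverse of the Minkowski question-mark function, but I prefer the self-contained mediant induction.

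With the dyadic picture in hand, the three global properties fall out. Monotonicity of the sup-extension is immediate, since $b\le\al$ implies $\{\eta(c):c\le b\}\subseteq\{\eta(c):c\le\al\}$. Strict monotonicity, hence injectivity, follows because between any reals $\al<\al'$ one finds dyadics $b_1<b_2$ with $\al<b_1<b_2<\al'$, whence $\eta(\al)\le\eta(b_1)<\eta(b_2)\le\eta(\al')$. Continuity follows from monotonicity together with density of the image: a monotone function on an interval can fail to be continuous only at a jump, and a jump would leave an open gap in the range disjoint from the dense set $\{\eta(m/2^n)\}$, a contradiction. Finally, surjectivity onto $[0,1]$ follows from continuity, the endpoint values $\eta(0)=0/1$ and $\eta(1)=1/1$, and the intermediate value theorem, the image being a connected set containing both endpoints.

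The main obstacle I anticipate is the Farey-neighbour induction together with the claim that every reduced fraction in $[0,1]$ appears: keeping coprimality and the determinant-one relation correctly synchronised across the two expansions of a dyadic rational, and making the ``each reduced fraction is the mediant of a unique neighbouring pair'' step precise, is the only genuinely combinatorial part. The continuity, monotonicity, injectivity, and surjectivity deductions are then routine consequences of monotonicity plus density of the image.
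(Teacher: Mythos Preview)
Your argument is correct and complete. The Farey-neighbour induction (the determinant-one invariant $qr-ps=1$ propagating under mediants) is exactly the right engine, and the deductions of strict monotonicity, continuity via no-jump-against-dense-image, and surjectivity via the intermediate value theorem are all sound. The only point worth tightening is the consistency check you flag parenthetically: that the sup-extension agrees with the iterative definition on dyadics. This follows immediately from strict monotonicity on dyadics, so the sup over dyadics $\le m/2^n$ is attained at $m/2^n$ itself.

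As for comparison with the paper: there is nothing to compare. The paper gives no proof at all---the proposition is stated with a bare \qedsymbol, and the preceding remark simply asserts continuity as ``clear''. The authors are treating this as a classical fact, which it is: via Proposition~\ref{cf-propos} the Farey coordinate $\eta$ is the inverse of Minkowski's question-mark function, whose homeomorphism property is standard. Your self-contained Stern--Brocot argument is therefore more than the paper offers; if anything, it would serve as the missing justification.
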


\subsection{Gauss Map and the Induced Ergodic Measure on Farey Bryophylla}
\label{Subsection-Gauss-Map}

In this subsection we discuss a natural ergodic measure on Farey bryophylla induced by the Farey coordinate,
for more details see~\cite{Karpenkov-book}.

Recall that $B$ is a canonical Farey bryophyllum.
Denote by~$\eta_B$ the lifting of the Farey coordinate~$\eta$ 
to~$\calD_B$ via~$\Pi_{B}$.
Now we have a natural Gauss map~$\Ga_B$ defined on~$\Pi_{B}$.
It is defined by forgetting the first coordinate of the continued fraction expression for~$\eta_B$:
\[\Ga_B([0;a_1:a_2:a_3:\cdots])=[0;a_2:a_3:\cdots].\]
This map has a remarkable (fractal) action on curvilinear triangles~$\trioB(x_1,\dots,x_n)$.
We have
\[
  \Ga_B\big(\trioB(x_1,\dots,x_{n})\big)
  =\trioB(x_2,\dots,x_{n}).
\]
We can see that the action of the Gauss map~$\Ga_B$ on the Farey coordinate coincides with the Gauss map on the unit segment.
Therefore, we have an induced ergodic measure on~$\calD_B$, lifted from the continued fraction ergodic measure~$\hat\mu$  defined on a measurable set $S\subset[0,1]$ as
\[
  \hat\mu(S)
  =\frac{1}{\ln(2)}\int\limits_S\,\frac{dx}{1+x}.
\]

\section{Proofs of the Main Results}
\label{Proofs of the main results}

\noindent
In this section we collect the proofs of~Theorems~\ref{theorem-clasification}, \ref{theorem-Farey-region},
and~\ref{convergence-triangles}.

\subsection{Proof of Theorem~\ref{theorem-clasification}(1)}
\label{theorem-clasification-1-proof}

Let $(\An,\Ap,\Am,\sp,\sm,\sn,\tp,\tm)$
be a conformal bryophyllum.
We will first show how to bring the points $\An,\Ap,\Am$ into a standard position using conformal maps.
Let $\ell_{\pm}$ and $\ell_0$ be the circlines that contain the arcs/segments $\spm$ and $\sn$ respectively.
By definition, the circlines~$\ell_+$ and~$\ell_-$ do not coincide and hence they intersect in exactly two points.
One of these two points is~$\An$.
A conformal map that takes the other intersection point of the circlines~$\ell_+$ and~$\ell_-$ to~$\infty$ maps~$\ell_{\pm}$ to circlines going through~$\infty$, i.e.\ to straight lines.
Thus we can assume without loss of generality
that the circlines~$\ell_{\pm}$ are straight lines.
Note that the condition $\sp\cap\sm=\{\An\}$
implies that the arcs~$\spm$ do not pass through~$\infty$ and therefore are the straight line segments $s_{\pm}=[\Apm,\An]$ between $\Apm$ and $\An$.
There exists an affine map that maps the circle through the points $\An$, $\Ap$, $\Am$ to the unit circle and maps~$\An$ to~$1$,
hence we can assume without loss of generality that the points $\An$, $\Ap$, $\Am$ are on the unit circle and that $\An=1$.
Now $\ell_0$ is a circle, while $\ell_{\pm}$ are secants of the circle~$\ell_0$ from the point~$\An$.
Recall that $\angle(\sn,\sp)=\angle(\sn,\sm)$
(Proposition~\ref{angles}),
hence the secants $\ell_{\pm}$ make the same angle with the circle~$\ell_0$.
In other words, the line through~$\An=1$ and the centre of the circle~$\ell_0$ is a symmetry axis of the configuration $\ell_0$, $\ell_{+}$, $\ell_{-}$.
It follows that the points $\Ap$ and $\Am$ on the unit circle are conjugates of each other,
i.e. $\Ap$ and~$\Am$ correspond to complex numbers~$e^{i\varphi}$ and~$e^{-i\varphi}$ for some~$\varphi\in[0,2\pi]$.
Due to the symmetry of the point configuration we can assume that $\varphi\in[0,\pi]$.
The condition that the points~$\Ap=e^{i\varphi}$ and~$\Am=e^{-i\varphi}$ are distinct excludes the cases $\varphi=0$ and~$\varphi=\pi$,
hence $\varphi\in(0,\pi)$.

\bigskip
We will now calculate the maps~$\tpm$.
Let $Q=\tp(\An)$ and let $q=re^{i\psi}$ be the corresponding complex number.
Since the points $\Ap$, $\Am$ and $\An$ are distinct,
the map $\tp$ is uniquely determined by the conditions
$$\tp(\Ap)=\An,\quad\tp(\Am)=\Ap,\quad\tp(\An)=Q.$$
The complex coordinates of the points $\Ap$, $\Am$, $\An$ and $Q$ are
$$a=e^{i\varphi},~\bar{a}=e^{-i\varphi},~1~\text{and}~q=re^{i\psi}$$
respectively,
so the map~$\tp$ satisfies the conditions
$$\tp(a)=1,\quad\tp(\bar{a})=a,\quad\tp(1)=q.$$
Direct computations show that the map given by
$$\tp(z)=\frac{(2q-a-1)z+(1+a-qa-q\bar{a})}{(q+q\bar{a}-a-\bar{a})z+(2-q-q\bar{a})}$$
satisfies these conditions:
\begin{align*}
  \tp(a)&=\frac{(2q-a-1)a+(1+a-qa-q\bar a)}{(q+q\bar a-a-\bar a)a+(2-q-q\bar a)}=\frac{1-a^2+qa-q\bar a}{1-a^2+qa-q\bar a}=1,\\
  \tp(\bar a)&=\frac{(2q-a-1)\bar a+(1+a-qa-q\bar a)}{(q+q\bar a-a-\bar a)\bar a+(2-q-q\bar a)}=\frac{a(1+q\bar a^2-q-\bar a^2)}{1+q\bar a^2-q-\bar a^2}=a,\\
  \tp(1)&=\frac{(2q-a-1)+(1+a-qa-q\bar a)}{(q+q\bar a-a-\bar a)+(2-q-q\bar a)}=\frac{q(2-a-\bar a)}{2-a-\bar a}=q.
\end{align*}
Let us now compute $\tp(Q)$:
\begin{align*}
  \tp(q)
  &=\frac{(2q-a-1)q+(1+a-qa-q\bar a)}{(q+q\bar a-a-\bar a)q+(2-q-q\bar a)}
  =\frac{1-q+2q^2+a-2qa-q\bar a}{2-q+q^2+q^2\bar a-qa-2q\bar a}\\
  &=\frac{\bar a(q-a)(2qa-a-1)}{\bar a(q-a)(qa+q-2)}
  =\frac{2qa-a-1}{qa+q-2}.
\end{align*}
Let us write down the condition for the point $\tp(Q)$ to be on the line $\An\Ap$.
This is equivalent to the fact that the vector $\An\Ap$ is parallel to the vector $\An(\tp(Q))$,
i.e.\ we need
$$\frac{1-\tp(q)}{1-a}$$
to be real.
We calculate
\begin{align*}
  1-\tp(q)
  &=\frac{q+a-qa-1}{qa+q-2}=\frac{(q-1)(1-a)}{qa+q-2},\\
  \frac{1-\tp(q)}{1-a}
  &=\frac{q-1}{qa+q-2}
  =\frac{(q-1)(\bar q\bar a+\bar q-2)}{|qa+q-2|^2}\\
  &=\frac{|q|^2\bar a+|q|^2-2q-\bar q\bar a-\bar q+2}{|qa+q-2|^2}.
\end{align*}
For this number to be real we need $|q|^2\bar a-2q-\bar q\bar a-\bar q$ to be real.
We calculate
$$|q|^2\bar a-2q-\bar q\bar a-\bar q=r^2e^{-i\varphi}-2re^{i\psi}-re^{-i(\psi+\varphi)}-re^{-i\psi}.$$
The imaginary part of this number is
$$r\left(-r\sin(\varphi)+\sin(\psi+\varphi)-\sin(\psi)\right)$$
which vanishes if and only if either $r=0$ or $\sin(\varphi)=\sin(\psi+\varphi)-\sin(\psi)=0$ or
$$\sin(\varphi)\ne0\quad\text{and}\quad r=\frac{\sin(\psi+\varphi)-\sin(\psi)}{\sin(\varphi)}=\frac{\cos(\varphi/2+\psi)}{\cos(\varphi/2)}.$$
Note that the case $\sin(\varphi)=0$ does not correspond to a bryophyllum as in this case the points $\Ap=e^{i\varphi}$ and $\Am=e^{-i\varphi}$ are not distinct.
The case $r=0$ is included in the case $r=\frac{\cos(\varphi/2+\psi)}{\cos(\varphi/2)}$ if $\cos(\varphi/2+\psi)=0$.
Therefore
$$r=\frac{\cos(\varphi/2+\psi)}{\cos(\varphi/2)}\quad\text{and}\quad\sin(\varphi)\ne0.$$

\bigskip\noindent
Note that if we add~$\pi$ to $\psi$, both
$r=\frac{\cos(\varphi/2+\psi)}{\cos(\varphi/2)}$ and $e^{i\psi}$ will change sign,
hence $q=re^{i\psi}$ will remain unchanged.
This means that we will get the same bryophyllum.
For this reason,
it is sufficient to consider values of~$\psi$ in the range~$[0,\pi]$.

\bigskip\noindent
The arc~$\sn$ contains the points $\Ap=e^{i\varphi}$, $\Am=e^{-i\varphi}$ and $Q=re^{i\psi}$.
The condition $\varphi+\psi\ne\pi$ implies that these three points are distinct.
In the case $\psi=\varphi/2$, the points $\Ap$, $\Am$, $Q$ are collinear and the edge~$\sn$ is the vertical straight line segment between~$\Ap$ and~$\Am$.
In the case $\psi\ne\varphi/2$, the points $\Ap$, $\Am$, $Q$ are not collinear and the edge~$\sn$ is an arc of a circle with centre on the real axis.

\bigskip\noindent
We know that
\[
  \tp(z)
  =\frac{(2q-a-1)z+(1+a-qa-q\bar{a})}{(q+q\bar{a}-a-\bar{a})z+(2-q-q\bar{a})}.
\]
We substitute
\[
  a=e^{i\varphi},\quad q=re^{i\psi}
  =\frac{\cos(\varphi/2+\psi)}{\cos(\varphi/2)}\cdot e^{i\psi}
\]
and simplify using
$
 a+\bar{a}
 =e^{i\varphi}+e^{-i\varphi}
 =2\cos(\varphi)
$
and
\begin{align*}
  a+1=e^{i\varphi}+1
  &=\cos(\varphi)+1+i\sin(\varphi)\\
  &=2\cos^2(\varphi/2)+2i\sin(\varphi/2)\cos(\varphi/2)\\
  &=2\cos(\varphi/2)(\cos(\varphi/2)+i\sin(\varphi/2))=2\cos(\varphi/2)\cdot e^{i\varphi/2}
\end{align*}
to see that the map~$\tp$ must be of the form
$$
  \tp(z)
  =\frac
    {(re^{i\psi}-\cos(\varphi/2)\cdot e^{i\varphi/2})z+(\cos(\varphi/2)\cdot e^{i\varphi/2}-r\cos(\varphi)e^{i\psi})}
    {(r\cos(\varphi/2)\cdot e^{i(\psi-\varphi/2)}-\cos(\varphi))z+(1-r\cos(\varphi/2)\cdot e^{i(\psi-\varphi/2)})}.
$$
By symmetry or similar calculation, the map~$\tm$ must be of the form
$$
  \tm(z)
  =\frac
    {(re^{-i\psi}-\cos(\varphi/2)\cdot e^{-i\varphi/2})z+(\cos(\varphi/2)\cdot e^{-i\varphi/2}-r\cos(\varphi)e^{-i\psi})}
    {(r\cos(\varphi/2)\cdot e^{-i(\psi-\varphi/2)}-\cos(\varphi))z+(1-r\cos(\varphi/2)\cdot e^{-i(\psi-\varphi/2)})}.
$$
It remains to show that these maps~$\tpm$ satisfy the conditions in the definition of a conformal bryophyllum.
By construction, $\tp(\Ap)=\An$ and $\tp(\Am)=\Ap$.
We also know that $\tp(\An)=Q\in\sn$ and $\tp(Q)\in\sp=[\Ap,\An]$.
Recall that conformal maps map circlines to circlines.
The map~$\tp$ sends points $\Am,Q,\Ap$ on the arc~$\sn$ to the points $\Ap,\tp(Q),\An$ on the segment~$\sp=[\Ap,\An]$, i.e.\ $\tp(\sn)=\sp$.
The map~$\tp$ also sends points~$\Am,\An$ on the segment~$\sm=[\Am,\An]$ to the points~$\Ap,Q$ on the arc~$\sn$,
hence the image of the segment~$\sm=[\Am,\An]$ under~$\tp$ is the arc through the points~$\Ap$ and~$Q$ that forms an angle~$\angle(\sn,\sm)$ with the segment~$\sp$.
Recall that $\angle(\sn,\sm)=\angle(\sn,\sp)$ (Proposition~\ref{angles}), hence $\sn$ and $\tp(\sm)$ are arcs of the same circle.
Note that the point~$Q=\tp(\An)$ is situated on the arc $\sn$ between the endpoints~$\Am$ and ~$\An$,
hence $\tp(\sm)\subset\sn$ with the same orientation.
The conditions on the map~$\tm$ are fulfilled automatically since the picture is symmetric with respect to the real axis.\qed

\subsection{The Centre of the Arc~$\sn$}
\label{subsection-centre-sn}

Let us find the centre and radius of the arc~$\sn$ of the canonical bryophyllum
$
  \Br_{\varphi,\psi}
  =(\An,\Ap,\Am,\sp,\sm,\sn,\tp,\tm)
$
with $\psi\ne\varphi/2$.
Recall that by Definition~\ref{def-canon-bryo} we have $T_+(A_0)=r e^{ i\psi}$, where 
\[
  r=\frac{\cos(\varphi/2+\psi)}{\cos(\varphi/2)}.
\]


\begin{lemma}
\label{lemma-centre-sn-1}
If $\psi\ne\varphi/2$ then the edge~$\sn$ is an arc of a circle with centre~$x$ and radius~$R$ given by
\[
  x=\frac{1-r^2}{2(\cos(\varphi)-r\cos(\psi))},
  \qquad
  R=\sqrt{1-2x\cos(\varphi)+x^2}.
\]
\end{lemma}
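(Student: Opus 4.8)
The plan is to exploit the conjugate symmetry of the two vertices $\Ap$ and $\Am$ to locate the centre on the real axis, and then pin down its abscissa by a single distance equation. First I would observe that, since $\Ap=e^{i\varphi}$ and $\Am=e^{-i\varphi}$ are complex conjugates, the perpendicular bisector of the chord $\Ap\Am$ is precisely the real axis; as the centre of any circle through $\Ap$ and $\Am$ must lie on this bisector, the centre of the circle carrying~$\sn$ is a real number~$x$. This is the same symmetry already used in the proof of Theorem~\ref{theorem-clasification}(1) to conclude that the centre of the circle $\ell_0$ lies on the real axis.

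Next I would compute the radius as the distance from the centre $x$ to the vertex $\Ap$. Writing $\Ap=\cos\varphi+i\sin\varphi$, we get
\[
R^2=|x-\Ap|^2=(x-\cos\varphi)^2+\sin^2\varphi=x^2-2x\cos\varphi+1,
\]
which is exactly the stated formula $R=\sqrt{1-2x\cos(\varphi)+x^2}$. Thus the radius formula follows immediately once $x$ is known, and the whole task reduces to determining $x$.

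To find $x$ I would use that the centre is also equidistant from the third point $Q=\tp(\An)=re^{i\psi}=r\cos\psi+ir\sin\psi$ lying on $\sn$. Equating $|x-Q|^2=|x-\Ap|^2$ gives
\[
x^2-2xr\cos\psi+r^2=x^2-2x\cos\varphi+1,
\]
the quadratic terms cancel, and one is left with the linear equation $2x(\cos\varphi-r\cos\psi)=1-r^2$, whence
\[
x=\frac{1-r^2}{2(\cos(\varphi)-r\cos(\psi))}.
\]
The computation is entirely elementary, so the only genuine point to check, and the main obstacle, is that the denominator does not vanish, which is exactly where the hypothesis $\psi\ne\varphi/2$ (together with the standing assumption $\varphi+\psi\ne\pi$) enters. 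By Lemma~\ref{lemma-denominator} we have $\cos(\varphi)-r\cos(\psi)=\sin(\varphi+\psi)\sin(\psi-\varphi/2)/\cos(\varphi/2)$, and for $\varphi\in(0,\pi)$ and $\psi\in[0,\pi]$ both factors $\sin(\varphi+\psi)$ and $\sin(\psi-\varphi/2)$ are nonzero under these assumptions. Hence the three points $\Ap,\Am,Q$ genuinely determine a finite circle rather than a line, $x$ is well defined, and the lemma follows.
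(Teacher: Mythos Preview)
Your proof is correct and follows essentially the same approach as the paper: use the conjugate symmetry of $\Ap,\Am$ to place the centre on the real axis, then solve $|x-e^{i\varphi}|^2=|x-re^{i\psi}|^2$ for~$x$ and read off~$R$. You additionally verify via Lemma~\ref{lemma-denominator} that the denominator is nonzero under the hypotheses, a detail the paper leaves implicit.
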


\begin{proof}
By definition, if $\psi\ne\varphi/2$, the edge~$\sn$ is an arc of a circle
through the points~$e^{i\varphi},e^{-i\varphi},re^{i\psi}$.
The points $e^{i\varphi}$ and~$e^{-i\varphi}$ are symmetric with respect to the real axis, hence the centre~$x$ of the circle is on the real axis.
The points~$e^{i\varphi}$ and $re^{i\psi}$ being on the circle with centre~$x$ and radius~$R$ implies
\[R^2=|x-e^{i\varphi}|^2=|x-re^{i\psi}|^2.\]
Rearranging this equation,
we obtain the expressions for~$x$ and~$R$ stated above.
\end{proof}

\bigskip\noindent
We will use Lemma~\ref{lemma-denominator} to simplify the denominator of~$x$:


\bigskip\noindent
{\bf Proof of Lemma~\ref{lemma-denominator}:}
Calculation shows
\begin{align*}
  \cos(\varphi)-r\cos(\psi)
  &=\cos(\varphi)-\frac{\cos(\varphi/2+\psi)}{\cos(\varphi/2)}\cdot\cos(\psi)\\
  &=\frac{\cos(\varphi)\cos(\varphi/2)-\cos(\varphi/2+\psi)\cos(\psi)}{\cos(\varphi/2)}\\
  &=\frac{\frac{1}{2}(\cos(3\varphi/2)+\cos(\varphi/2))-\frac{1}{2}(\cos(\varphi/2+2\psi)+\cos(\varphi/2))}{\cos(\varphi/2)}\\
 &=\frac{\frac{1}{2}(\cos(3\varphi/2)-\cos(\varphi/2+2\psi))}{\cos(\varphi/2)}
 =\frac{\sin(\varphi+\psi)\sin(\psi-\varphi/2)}{\cos(\varphi/2)}.\qquad\qquad\qquad\qquad\qed
\end{align*}

\begin{lemma}
\label{lemma-centre-sn-2}
If $\psi\ne\varphi/2$ then the edge~$\sn$ is an arc of a circle with centre
\[
  x
  =\frac{\sin(\psi)}{2\cos(\varphi/2)\sin(\psi-\varphi/2)}.
\]
For fixed~$\varphi$, the coordinate~$x=x(\psi)$ of the centre of~$\sn$ is monotonously decreasing as a function of~$\psi$, going from~$0$ to~$-\infty$ on the interval $[0,\varphi/2)$ and from~$+\infty$ to~$0$ on the interval~$(\varphi/2,\pi]$.
Hence, the function~$x=x(\psi)$ is injective on the interval~$[0,\pi]$.
\end{lemma}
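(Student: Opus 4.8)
The plan is to treat the two assertions separately: first derive the closed form for the centre $x$, and then study the real function $\psi\mapsto x(\psi)$.

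For the formula, I would begin from the expression $x=\frac{1-r^2}{2(\cos\varphi-r\cos\psi)}$ supplied by Lemma~\ref{lemma-centre-sn-1} and simplify numerator and denominator separately. The denominator is already resolved by Lemma~\ref{lemma-denominator}, giving $\cos\varphi-r\cos\psi=\frac{\sin(\varphi+\psi)\sin(\psi-\varphi/2)}{\cos(\varphi/2)}$. For the numerator I substitute $r=\frac{\cos(\varphi/2+\psi)}{\cos(\varphi/2)}$, so that $1-r^2=\frac{\cos^2(\varphi/2)-\cos^2(\varphi/2+\psi)}{\cos^2(\varphi/2)}$, and apply the identity $\cos^2A-\cos^2B=\sin(A+B)\sin(B-A)$ with $A=\varphi/2$, $B=\varphi/2+\psi$ to collapse the top to $\sin(\varphi+\psi)\sin\psi$. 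Dividing the two simplified expressions and cancelling the common factor $\sin(\varphi+\psi)$ — which is nonzero since $\varphi+\psi\in(0,2\pi)\setminus\{\pi\}$ by the hypotheses in Definition~\ref{def-canon-bryo} — yields exactly $x=\frac{\sin\psi}{2\cos(\varphi/2)\sin(\psi-\varphi/2)}$.

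For the monotonicity, since $\cos(\varphi/2)>0$ for $\varphi\in(0,\pi)$, the sign and variation of $x$ are governed by $f(\psi)=\frac{\sin\psi}{\sin(\psi-\varphi/2)}$. The key computation is its derivative: by the quotient rule the numerator of $f'(\psi)$ telescopes via the sine angle-subtraction formula to $\sin((\psi-\varphi/2)-\psi)=-\sin(\varphi/2)$, so $f'(\psi)=-\frac{\sin(\varphi/2)}{\sin^2(\psi-\varphi/2)}<0$ whenever $\psi\ne\varphi/2$. Hence $x$ is strictly decreasing on each of $[0,\varphi/2)$ and $(\varphi/2,\pi]$. Reading off $x(0)=0$, $x(\pi)=0$ and the one-sided limits $x(\psi)\to\mp\infty$ as $\psi\to(\varphi/2)^{\mp}$ then produces the two stated ranges $(-\infty,0]$ and $[0,+\infty)$.

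The remaining injectivity claim is where I expect the only genuine subtlety. The two monotone branches have disjoint open images — strictly negative on $[0,\varphi/2)$, strictly positive on $(\varphi/2,\pi]$ — so the sole candidate for a collision is the shared value $x=0$, attained at both $\psi=0$ and $\psi=\pi$. The point to address is that these endpoints do not give distinct bryophylla: as observed in the proof of Theorem~\ref{theorem-clasification}(1), replacing $\psi$ by $\psi+\pi$ leaves $q=re^{i\psi}$ unchanged, and indeed both $\psi=0$ (giving $r=1$) and $\psi=\pi$ (giving $r=-1$) send $\tp(\An)$ to $\An=1$, so $\sn$ is an arc of the unit circle with centre $0$ in either case. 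Once $\psi=0$ and $\psi=\pi$ are identified, the map $\psi\mapsto x(\psi)$ on $[0,\pi]$ is injective, as claimed.
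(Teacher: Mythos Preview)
Your proof is correct and follows essentially the same route as the paper: the same simplification of $1-r^2$ and $\cos\varphi-r\cos\psi$ via Lemmas~\ref{lemma-centre-sn-1} and~\ref{lemma-denominator}, and the same derivative computation for monotonicity (the paper also offers, as an alternative, the rewrite $x(\psi)=\big(2\cos^2(\varphi/2)-\sin\varphi\cdot\cot\psi\big)^{-1}$ together with the monotonicity of $\cot$). You actually go further than the paper in flagging and resolving the endpoint collision $x(0)=x(\pi)=0$; the paper's proof simply does not address this, so your observation that $\psi=0$ and $\psi=\pi$ yield the same degenerate configuration $q=1$ (cf.\ the discussion in Subsection~\ref{subsection-config}) is a genuine improvement in rigour.
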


\begin{proof}
According to Lemma~\ref{lemma-centre-sn-1}, we have
\[
  x=\frac{1-r^2}{2(\cos(\varphi)-r\cos(\psi))}
  \quad\text{with}\quad
  r=\frac{\cos(\varphi/2+\psi)}{\cos(\varphi/2)}.
\]
Substituting
\begin{align*}
  1-r^2
  &=1-\frac{\cos^2(\varphi/2+\psi)}{\cos^2(\varphi/2)}
  =\frac{\cos^2(\varphi/2)-\cos^2(\varphi/2+\psi)}{\cos^2(\varphi/2)}\\
  &=\frac{\frac{1}{2}(\cos(\varphi)+1)-\frac{1}{2}(\cos(\varphi+2\psi)+1)}{\cos^2(\varphi/2)}\\
  &=\frac{\frac{1}{2}(\cos(\varphi)-\cos(\varphi+2\psi))}{\cos^2(\varphi/2)}
  =\frac{\sin(\varphi+\psi)\sin(\psi)}{\cos^2(\varphi/2)}
\end{align*}
and the result of Lemma~\ref{lemma-denominator}
\[
  \cos(\varphi)-r\cos(\psi)
  =\frac{\sin(\varphi+\psi)\sin(\psi-\varphi/2)}{\cos(\varphi/2)}
\]
into the formula for~$x$, we obtain
\begin{align*}
  x=\frac{1-r^2}{2(\cos(\varphi)-r\cos(\psi))}
&  =\frac{\sin(\psi)}{2\sin(\psi-\varphi/2)\cos(\varphi/2)}.
\end{align*}
To show that $x=x(\psi)$ is monotonously decreasing for $\psi\in[0,\pi]$, we can rewrite~$x(\psi)$ as
\[
  x(\psi)
  =\frac{1}{2\cos^2(\varphi/2)-2\sin(\varphi/2)\cos(\varphi/2)\cot(\psi)}
\]
and recall that $\cot(\psi)$ is monotonously decreasing on~$[0,\pi]$.
Alternatively, we can differentiate $x(\psi)$
to see that
\[
  \frac{dx(\psi)}{d\psi}
  =-\frac{\tan(\varphi/2)}{2\sin^2(\psi-\varphi/2)}<0.
  \qedhere
\]
\end{proof}

\subsection{Proof of Theorem~\ref{theorem-clasification}(2)}
\label{theorem-clasification-2-proof}

Let $\Br_{\varphi,\psi}$ and $\Br_{\varphi',\psi'}$
be two conformally equivalent canonical bryophylla,
i.e.\ there exists a conformal map~$T$ sending vertices and edges of~$\Br_{\varphi,\psi}$
to the corresponding vertices and edges of~$\Br_{\varphi',\psi'}$.
The map~$T$ preserves the angle $\angle(\sp,\sm)=\pi-\varphi$
(Remark~\ref{rem-canon-bryo}), hence $\varphi'=\varphi$.
The vertices and the edges~$\spm$ of the canonical bryophyllum~$\Br_{\varphi,\psi}$ 
are determined by~$\varphi$,
hence they coincide for~$\Br_{\varphi,\psi}$ and~$\Br_{\varphi,\psi'}$.
The map~$T$ preserves the angle $\angle(\sp,\sn)=\angle(\sm,\sn)$,
hence the arcs~$\sn$ coincide for~$\Br_{\varphi,\psi}$ and~$\Br_{\varphi,\psi'}$.
This implies that the centres of the arcs coincide, $x(\psi)=x(\psi')$.
Lemma~\ref{lemma-centre-sn-2} says that, for fixed~$\varphi$, the coordinate~$x=x(\psi)$ of the centre of the arc~$\sn$ is an injective function of~$\psi$, hence $x(\psi)=x(\psi')$ implies $\psi=\psi'$.
\qed

\subsection{Some Circles Associated with Canonical Bryophylla}

Consider the canonical bryophyllum
$
 \Br_{\varphi,\psi}
 =(\An,\Ap,\Am,\sp,\sm,\sn,\tp,\tm)
$
with $\psi\ne\varphi/2$.
Consider the circle through the points~$\Ap$ and~$\Am$ with centre at the point~$t\in\r$.
Let $S_t$ be the arc of this circle situated
to the right of the vertical line through~$\Ap$ and~$\Am$.

\begin{proposition}
\label{prop-S-minus-one}
The circle~$S_t$ with~$t=-1$
is tangent to the line $\Ap\An$ at the point~$\Ap$
and to the line~$\Am\An$ at the point~$\Am$.
\end{proposition}

\begin{proof}
An arc of the circle between~$\Ap$ and~$\Am$ is shown as a dashed arc in Figure~\ref{image-circle-1}.
Consider the Euclidean triangle with vertices $B=-1$, $\Ap=e^{i\varphi}$, $\An=1$.
The angle $\angle B\Ap\An$ is inscribed in the upper semi-circle of the unit circle with centre at the origin,
hence the angle is equal to~$\pi/2$.
This means that the line $\Ap\An$ is perpendicular to the radius~$B\Ap$ of the circle and therefore tangent to the circle at the point~$\Ap$.
Similarly, the line $\Am\An$ is tangent to the circle at the point~$\Am$.
\end{proof}

\begin{proposition}
\label{prop-S-one-over-cos}
Let $O$ be the origin.
Then the circle~$S_t$ with~$t=1/\cos(\varphi)$
is tangent to the line $O\Ap$ at the point~$\Ap$
and to the line~$O\Am$ at the point~$\Am$.
\end{proposition}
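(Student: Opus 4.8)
The plan is to follow the same strategy as in the proof of Proposition~\ref{prop-S-minus-one}, reducing the tangency assertion to a perpendicularity statement about the radius of the circle. Tangency of a circle to a line at a point lying on the circle is equivalent to the radius through that point being orthogonal to the line. So it suffices to show that the radius $[t,\Ap]$ of $S_t$ is orthogonal to the line $O\Ap$ and, by the symmetric argument, that $[t,\Am]$ is orthogonal to $O\Am$. Since tangency at the endpoints $\Ap,\Am$ is a property of the full circle rather than of a particular arc, the prescription that $S_t$ be the right-hand arc plays no role here.

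To carry this out, first I would recall that $\An=1$, $\Ap=e^{i\varphi}$, $\Am=e^{-i\varphi}$ lie on the unit circle centred at the origin $O=0$, so the line $O\Ap$ has direction $\Ap-O=e^{i\varphi}$, while the radius from the centre $t\in\r$ to $\Ap$ has direction $\Ap-t=e^{i\varphi}-t$. Two complex vectors $u,v$ are orthogonal precisely when $\Re(u\bar v)=0$, so the tangency condition at $\Ap$ reads
\[
  \Re\big((e^{i\varphi}-t)\,\overline{e^{i\varphi}}\big)
  =\Re\big(1-t\,e^{-i\varphi}\big)
  =1-t\cos(\varphi)=0,
\]
which holds exactly when $t=1/\cos(\varphi)$. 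This is the asserted value, so for this choice of $t$ the radius $[t,\Ap]$ is orthogonal to $O\Ap$ and hence $O\Ap$ is tangent to $S_t$ at $\Ap$. Equivalently, and in the spirit of Proposition~\ref{prop-S-minus-one}, one can note that the perpendicular to $O\Ap$ erected at $\Ap$ crosses the real axis exactly at $1/\cos(\varphi)$ (a one-line computation), so placing the centre at this point forces $[t,\Ap]\perp O\Ap$; in the right triangle $O\Ap t$ this amounts to the relation $\cos(\varphi)=|O\Ap|/|Ot|=1/|Ot|$.

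The statement for $\Am$ then follows by symmetry: complex conjugation fixes the real axis (hence fixes the centre $t$) and interchanges $\Ap\leftrightarrow\Am$ together with the lines $O\Ap\leftrightarrow O\Am$, so it carries the tangency at $\Ap$ to the tangency at $\Am$. The one point that needs care is the degenerate case $\varphi=\pi/2$: then $\cos(\varphi)=0$, the value $1/\cos(\varphi)$ is undefined, and the lines $O\Ap,O\Am$ are vertical, so no finite centre yields tangency (the configuration degenerates to the vertical line through $\Ap,\Am$). I therefore expect the only real obstacle to be bookkeeping, namely recording the tacit hypothesis $\varphi\ne\pi/2$ and checking that the sign of $t=1/\cos(\varphi)$ (positive for $\varphi<\pi/2$, negative for $\varphi>\pi/2$) is treated uniformly, which the computation above already achieves since the identity $1-t\cos(\varphi)=0$ is insensitive to these signs.
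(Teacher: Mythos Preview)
Your proof is correct and follows essentially the same approach as the paper: both verify tangency by showing the radius from the centre $t=1/\cos(\varphi)$ to $\Ap$ is orthogonal to $O\Ap$, the paper via the Cartesian dot product $B\Ap\bullet O\Ap=\cos^2(\varphi)-1+\sin^2(\varphi)=0$ and you via the equivalent complex condition $\Re((e^{i\varphi}-t)e^{-i\varphi})=1-t\cos(\varphi)=0$. Your additional remark on the degenerate case $\varphi=\pi/2$ is a useful observation that the paper leaves implicit.
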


\begin{proof}
Let $B$ be the centre of the circle.
The dot-product of the vectors
\[
  B\Ap=\left(\cos(\varphi)-\frac{1}{\cos(\varphi)},\sin(\varphi)\right),\quad
  O\Ap=(\cos(\varphi),\sin(\varphi))
\]
is
\[
  B\Ap\bullet O\Ap
  =\cos^2(\varphi)-1+\sin^2(\varphi)=0,
\]
hence the vectors~$B\Ap$ and~$O\Ap$ are orthogonal,
i.e. the line $O\Ap$ is tangent to the circle~$S_t$ with~$t=1/\cos(\varphi)$.
\end{proof}

\begin{figure}[t]
\begin{center}
\includegraphics{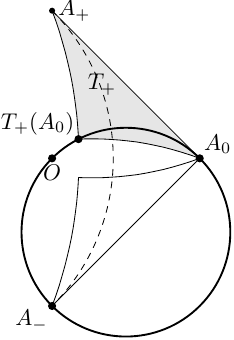}
\end{center}
\caption{The circle passing through the points $O$, $A_-$, $A_0$
and $T_+(A_0)$.}
\label{image-circle-1}
\end{figure} 

\begin{proposition}
\label{place-of-t+}
Let $O$ be the origin.
The circle passing through the points $O$, $\An$, $\Am$
contains the point $\tp(\An)$ and is tangent to the line $\An\Ap$, see Figure~\ref{image-circle-1}.
\end{proposition}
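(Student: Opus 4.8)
The plan is to work in explicit Cartesian coordinates, since both assertions reduce to short half-angle identities. First I would write down the circle through the three points $O=0$, $\An=(1,0)$, and $\Am=e^{-i\varphi}=(\cos\varphi,-\sin\varphi)$; these are non-collinear because $\varphi\in(0,\pi)$, so the circle is well defined. As it passes through the origin, its equation has no constant term and is of the form $x^2+y^2+dx+ey=0$. Imposing passage through $\An$ forces $d=-1$, and then passage through $\Am$ forces $e=\tan(\varphi/2)$, so the circle is $x^2+y^2-x+\tan(\varphi/2)\,y=0$ with centre $C=\big(\tfrac12,-\tfrac12\tan(\varphi/2)\big)$.

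For the concyclicity claim I would substitute $\tp(\An)=re^{i\psi}=(r\cos\psi,r\sin\psi)$, for which $x^2+y^2=r^2$, into this equation. After cancelling $r$, membership in the circle is equivalent to the identity $r=\cos\psi-\tan(\varphi/2)\sin\psi$. This is exactly what the defining formula $r=\cos(\varphi/2+\psi)/\cos(\varphi/2)$ yields after expanding $\cos(\varphi/2+\psi)=\cos(\varphi/2)\cos\psi-\sin(\varphi/2)\sin\psi$ and dividing by $\cos(\varphi/2)$; hence $\tp(\An)$ lies on the circle.

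For the tangency claim I would show that the line $\An\Ap$ touches the circle at the point $\An$, which it suffices to do by checking that the radius $C\An=\An-C=\big(\tfrac12,\tfrac12\tan(\varphi/2)\big)$ is orthogonal to the direction $\Ap-\An=(\cos\varphi-1,\sin\varphi)$. Their dot product is proportional to $(\cos\varphi-1)+\tan(\varphi/2)\sin\varphi$, which vanishes by the identities $\cos\varphi-1=-2\sin^2(\varphi/2)$ and $\tan(\varphi/2)\sin\varphi=2\sin^2(\varphi/2)$. A synthetic alternative is the converse of the tangent--chord (inscribed angle) theorem: the inscribed angle $\angle\An\Am O$ subtending the chord $O\An$ equals $\tfrac{\pi}{2}-\tfrac{\varphi}{2}$, and the angle between the line $\An\Ap$ and that same chord $\An O$ is also $\tfrac{\pi}{2}-\tfrac{\varphi}{2}$, which forces tangency at $\An$.

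I do not anticipate a genuine obstacle here: each of the two claims amounts to a single trigonometric identity. The only points needing care are confirming that the three defining points are non-collinear so the circle actually exists, and verifying that the contact occurs precisely at $\An$ (settled unambiguously by the orthogonality-of-radius computation) rather than at some interior point of the segment.
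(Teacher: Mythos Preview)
Your argument is correct and follows essentially the same plan as the paper: determine the circle through $O,\An,\Am$ explicitly, verify that $\tp(\An)$ satisfies its equation via the defining relation $r=\cos(\varphi/2+\psi)/\cos(\varphi/2)$, and check tangency at $\An$ by orthogonality of the radius $C\An$ to the chord direction $\Ap-\An$. The only cosmetic difference is that the paper works in complex coordinates (writing the centre as $c=\tfrac{1}{2\cos(\varphi/2)}e^{-i\varphi/2}$ and verifying $|c|=|1-c|=|e^{-i\varphi}-c|=|re^{i\psi}-c|$), whereas you derive the real Cartesian equation $x^2+y^2-x+\tan(\varphi/2)y=0$ directly; the two computations are equivalent, and your synthetic tangent--chord alternative is a pleasant bonus.
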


\begin{proof}
Let $R=\frac{1}{2\cos(\varphi/2)}$
and let $C$ be the point
with the complex coordinate $c=R e^{-i\varphi/2}$.
Then we can show that the points $O$, $\An=1$, $\Am=e^{-i\varphi}$ and $\tp(\An)=r e^{i\psi}$
all lie on the circle with centre~$C$ and radius~$R$.
Note that
\[r=\frac{\cos(\varphi/2+\psi)}{\cos(\varphi/2)}=2R\cos(\varphi/2+\psi)\]
and
\[
  c
  =R e^{-i\varphi/2}
  =\frac{\cos(\varphi/2)-i\sin(\varphi/2)}{2\cos(\varphi/2)}
  =\frac{1}{2}-\frac{i}{2}\tan(\varphi/2),
\]
hence
\[c+\bar c=1.\]
We have
\begin{align*}
  |OC|
  &=|Re^{-i\varphi/2}|=R,\\
  |\An C|
  &=|1-c|=|\bar c|=R,\\
  |\Am C|
  &=|e^{-i\varphi}-R e^{-i\varphi/2}|
  =|e^{-i\varphi}(1-R e^{i\varphi/2})|\\
  &=|1-R e^{i\varphi/2}|=|1-\bar c|=|c|=R,\\
  |\tp(\An)C|
  &=|r e^{i\psi}-R e^{-i\varphi/2}|
  =|2R\cos(\varphi/2+\psi)e^{i\psi}-R e^{-i\varphi/2}|\\
  &=R\cdot|(2\cos(\varphi/2+\psi)e^{i(\varphi/2+\psi)}-1)e^{-i\varphi/2}|\\
  &=R\cdot|2\cos(\varphi/2+\psi)e^{i(\varphi/2+\psi)}-1|
  =R\cdot|e^{i(\varphi+2\psi)}|
  =R.
\end{align*}
Now we will look at the angle between the radius $C\An$ of the circle and the line $\An\Ap$.
We have vectors
\begin{align*}
  &C\An
  =1-c=\bar c=R e^{i\varphi/2},\\
  &\An\Ap
  =e^{i\varphi}-1=i\cdot 2\sin(\varphi/2) e^{i\varphi/2}.
\end{align*}
The ratio
\[
  \frac{e^{i\varphi}-1}{1-c}
  =\frac{i\cdot 2\sin(\varphi/2) e^{i\varphi/2}}{R e^{i\varphi/2}}
  =i\cdot\frac{2\sin(\varphi/2)}{R}
\]
is a positive real multiple of~$i$.
This means that the vectors~$\An\Ap$ and~$C\An$ are orthogonal,
i.e. the line $\An\Ap$ is tangent to the circle through the points $O$, $\An$, $\Am$, $\tp(\An)$.
\end{proof}

\subsection{Proof of Theorem~\ref{theorem-Farey-region}}
\label{theorem-Farey-region-proof}

Let
$
 \Br_{\varphi,\psi}
 =(\An,\Ap,\Am,\sp,\sm,\sn,\tp,\tm)
$
be a canonical bryophyllum.
Recall that $S_t$ is the arc of the circle through the points~$\Ap$ and~$\Am$ with centre at~$t\in\r$, to the right of the vertical line through~$\Ap$ and~$\Am$.

\subsubsection{Edge Condition}

\begin{proposition}
\label{proposition-Farey-interior}
The arcs $\sn,\sm,\sp$ of the canonical bryophillum~$\Br_{\varphi,\psi}$ 
bound a curvilinear triangle with angles not exceeding $\pi$ if and only if 
\[\psi\ge\psi_1(\varphi),\]
where $\psi_1(\varphi)$ is the unique solution of the equation
\[\sin(\psi)+2\sin(\psi-\varphi/2)\cos(\varphi/2)=0.\]

\end{proposition}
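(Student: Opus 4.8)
The plan is to translate the geometric \textbf{Edge Condition} into an analytic statement about the position of the centre~$x(\psi)$ of the arc~$\sn$, and then extract the stated inequality. Recall from Lemma~\ref{lemma-centre-sn-2} that $\sn$ lies on a circle centred at $x(\psi)\in\r$ on the real axis, with $x(\psi)$ strictly decreasing and injective on~$[0,\pi]$. The edges $\spm=[\Apm,\An]$ are fixed straight segments determined by~$\varphi$ alone, so the only freedom is the curvature and side of bulge of~$\sn$, which is governed entirely by where $x(\psi)$ sits relative to the chord through $\Ap$ and~$\Am$. First I would identify the critical configuration: the arcs $\sp$ and $\sm$ are tangent to~$\sn$ at $\Ap$ and~$\Am$ precisely when the circle carrying~$\sn$ coincides with the circle~$S_t$ of Proposition~\ref{prop-S-minus-one}, i.e.\ the one centred at $t=-1$, since that is the unique circle through $\Apm$ tangent to both lines $\Ap\An$ and $\Am\An$.

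Next I would argue that the angle at a vertex (say~$\Ap$) between $\sp$ and~$\sn$ equals $\pi$ exactly at this tangency, is less than~$\pi$ (so the triangle is genuinely bounded and convex at that corner) when $\sn$ bulges to the correct side, and exceeds~$\pi$ otherwise. Because $x(\psi)$ is monotone in~$\psi$ by Lemma~\ref{lemma-centre-sn-2}, the condition ``angles do not exceed~$\pi$'' becomes a single inequality of the form $x(\psi)\ge$ (the centre value at tangency) or its reverse, depending on orientation. Setting $x(\psi_1)$ equal to the tangency centre and solving gives the threshold equation. Concretely, the tangency condition $t=-1$ should, after substituting the formula $x(\psi)=\dfrac{\sin\psi}{2\cos(\varphi/2)\sin(\psi-\varphi/2)}$ from Lemma~\ref{lemma-centre-sn-2} and clearing denominators, reduce to
\[
  \sin(\psi)+2\sin(\psi-\varphi/2)\cos(\varphi/2)=0,
\]
which is exactly the defining equation for~$\psi_1(\varphi)$.

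The final step is to verify existence and uniqueness of the root~$\psi_1(\varphi)$ and to pin down the direction of the inequality. Uniqueness follows from the injectivity of $x(\psi)$ already established in Lemma~\ref{lemma-centre-sn-2}: since $x$ is strictly monotone, it hits the tangency value at most once on each monotonic branch, and a sign analysis of $\sin(\psi)+2\sin(\psi-\varphi/2)\cos(\varphi/2)$ on $[0,\pi]$ confirms a single relevant zero. To fix the sense of the inequality, I would evaluate the left-hand side at a convenient point where the configuration is known to be a proper triangle---for instance at one of the worked examples such as the Peano point $(\varphi,\psi)=(\pi/2,\pi/4)$ or the point~$L=(2\pi/3,\pi/6)$ which lies on the curve $AL B$---and check that the expression is nonnegative there, giving $\psi\ge\psi_1(\varphi)$.

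The main obstacle I expect is the careful bookkeeping of \emph{orientation}: as $\psi$ varies, the centre $x(\psi)$ runs off to $\pm\infty$ and the arc $\sn$ switches the side on which it bulges (the two branches $[0,\varphi/2)$ and $(\varphi/2,\pi]$ from Lemma~\ref{lemma-centre-sn-2}), so I must make sure that the single inequality $\psi\ge\psi_1(\varphi)$ genuinely captures exactly the convex, embedded configurations on both branches and does not accidentally admit a degenerate or self-overlapping arc on the wrong side. Tying the tangent circle $S_{-1}$ of Proposition~\ref{prop-S-minus-one} firmly to the angle~$\pi$ condition, and tracking the inscribed-angle relation across the chord $\Ap\Am$, is where the proof must be most careful; the trigonometric simplification itself is routine once the tangency is correctly identified.
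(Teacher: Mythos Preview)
Your plan is correct and follows essentially the same route as the paper: identify the tangent circle $S_{-1}$ from Proposition~\ref{prop-S-minus-one} as the borderline configuration, translate this into the equation $x(\psi_1)=-1$ using the formula from Lemma~\ref{lemma-centre-sn-2}, and clear denominators to obtain the stated trigonometric equation. The paper handles your anticipated ``orientation obstacle'' exactly as you suspect---by splitting into the two monotone branches $\psi\in[\psi_1,\varphi/2)$ (where $x\le-1$) and $\psi\in(\varphi/2,\pi]$ (where $x\ge0$), together with the affine case $\psi=\varphi/2$, and observing that these combine into the single inequality $\psi\ge\psi_1$.
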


\begin{proof}
First of all note that the edges~$\sp$ and $\sm$ only intersect in the vertex~$\An$.
For symmetry reasons, the edges~$\sp$ and $\sn$ intersect if and only if the edges~$\sm$ and~$\sn$ intersect.
So it is sufficient to check when the edges~$\sp$ and~$\sn$ intersect.

Proposition~\ref{prop-S-minus-one} says that the angles between $S_{-1}$ (dashed arc in Figure~\ref{image-circle-1}) and the lines $\spm$ are equal to zero.
Let~$x=x(\psi)$
be the centre of the circle determined by the arc~$\sn$
as in Lemma~\ref{lemma-centre-sn-2}.
We conclude that $\sn\cap\spm=\{\Apm\}$ if and only if $\sn$ is situated to the left of the arc~$S_{-1}$
which is equivalent to
\[x\le-1\quad\text{or}\quad x\ge0.\]
The analysis of the function~$x=x(\psi)$
in Lemma~\ref{lemma-centre-sn-2}
shows that there exists a unique value~$\psi_1=\psi_1(\varphi)$
such that $x(\psi_1)=-1$.
Moreover, $x\le-1$ if and only if $\psi_1\le\psi<\varphi/2$
and $x\ge0$ if and only if $\varphi/2<\psi$.
In the case $\psi=\varphi/2$ (affine bryophylla), the edge~$\sn$ is the straight line segment~$\Ap\Am$ and only intersects~$\spm$ at the vertices~$\Apm$.
Hence the arcs $\sn,\sm,\sp$ bound a curvilinear triangle
if and only if~$\psi\ge\psi_1$.
Note that the value~$\psi_1$ satisfies the equation
\[x(\psi_1)=\frac{\sin(\psi_1)}{2\sin(\psi_1-\varphi/2)\cos(\varphi/2)}=-1,\]
hence $\psi=\psi_1$ is a solution of the equation
\[\sin(\psi)+2\sin(\psi-\varphi/2)\cos(\varphi/2)=0.\]
Note that the angles between the arcs~$\sp,\sm,\sn$ do not exceed~$\pi$ by construction.
\end{proof}

\subsubsection{When do $\tp(\trio)$ and $\tm(\trio)$ have only one vertex in common?}


\begin{proposition}
\label{proposition-Farey-separation}
Assume that $\Br_{\varphi,\psi}$ satisfies the conditions of Proposition~\ref{proposition-Farey-interior}, i.e.\ the edges~$\sn,\sm,\sp$ bound a curvilinear triangle~$\trio$.
Then the condition
\[\tp(\trio)\,\cap\,\tm(\trio)=\{\An\}\]
is equivalent to
\[
  \psi<\frac{\pi}{2}-\frac{\varphi}{2}
  \quad\text{and}\quad
  \psi\le\psi_2(\varphi),
\]
where $\psi_2(\varphi)$ is the unique solution of the equation
\[\sin(\psi)\cos(\varphi)-2\sin(\psi-\varphi/2)\cos(\varphi/2)=0.\]
Note that the first inequality is equivalent to $\Im(\tp(\An))>0$,
while the second inequality is equivalent to the arc~$\sn$ being to the right of the arc~$S_t$ with~$t=1/\cos(\varphi)$.
\end{proposition}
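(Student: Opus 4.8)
The plan is to exploit the mirror symmetry of the canonical picture to turn the two–sided separation condition into a one–sided statement about the single triangle $\tp(\trio)$, and then to read off the two displayed inequalities from the position of the vertex $\tp(\An)$ and from the position of the edge $\tp(\sp)$.

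First I would record the symmetry. The coefficients of $\tm$ are the complex conjugates of those of $\tp$, so $\tm=\conj\circ\tp\circ\conj$, and $\trio$ is invariant under $\conj$ (its sides $\sp,\sm$ are mirror images and the centre of $\sn$ lies on $\r$). Hence $\tm(\trio)=\overline{\tp(\trio)}$ and
\[
  \tp(\trio)\cap\tm(\trio)=\tp(\trio)\cap\overline{\tp(\trio)}.
\]
Since $\An=1\in\r$ lies in both sets and $\tp(\trio)$ is a closed curvilinear triangle (a topological disk) whose vertex $\Ap=e^{i\varphi}$ lies strictly above $\r$, a connectedness argument gives the clean reduction: the intersection equals $\{\An\}$ if and only if $\tp(\trio)$ meets the real axis only at $\An$ (this already forces $\tp(\trio)$ into the closed upper half–plane, for otherwise $\tp(\trio)\setminus\{\An\}$ would be a connected set meeting both open half–planes but avoiding $\r$). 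This is the statement I would then verify.

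Next I would dispose of the first inequality. The vertices of $\tp(\trio)$ are $\tp(\Ap)=\An=1$, $\tp(\Am)=\Ap=e^{i\varphi}$ and $\tp(\An)=Q=re^{i\psi}$; the first two already lie in the closed upper half–plane with only $\An$ on $\r$, so the reduction forces $\Im Q>0$. Using $r=\cos(\varphi/2+\psi)/\cos(\varphi/2)$ from Definition~\ref{def-canon-bryo}, one has $\Im Q=r\sin\psi$, which for $\varphi,\psi\in(0,\pi)$ is positive exactly when $\cos(\varphi/2+\psi)>0$, i.e.\ when $\psi<\tfrac{\pi}{2}-\tfrac{\varphi}{2}$. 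This is the first inequality and its reformulation $\Im(\tp(\An))>0$. Now assuming $\Im Q>0$, the edge $\sp=[\An,\Ap]$ meets $\r$ only at $\An$, and the edge $\tp(\sm)$ (the sub–arc of $\sn$ from $\Ap$ to $Q$) stays above $\r$ because $\sn$ crosses $\r$ only once, between $\Ap$ and $\Am$. The only edge that can touch $\r$ again is the circular arc $\tp(\sp)$ from $\An=1$ to $Q$. As $Q$ lies strictly above $\r$, this arc leaves the closed upper half–plane precisely when its tangent at $\An$ points below $\r$; the boundary case is exactly when $\tp(\sp)$ is tangent to the real axis at $\An$ (equivalently, when $\tp(\sp)$ and its mirror image $\tm(\sm)$ are tangent at $\An$). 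I would identify this threshold geometrically: the circline carrying $\tp(\sp)$ is tangent to $\r$ at $\An$ exactly when $\sn$ coincides with the reference arc $S_{1/\cos(\varphi)}$ of Proposition~\ref{prop-S-one-over-cos}, i.e.\ when the centre $x(\psi)$ of $\sn$ equals $1/\cos(\varphi)$. Substituting the formula $x(\psi)=\sin(\psi)/(2\cos(\varphi/2)\sin(\psi-\varphi/2))$ from Lemma~\ref{lemma-centre-sn-2} (whose derivation uses Lemma~\ref{lemma-denominator}), the equation $x(\psi)=1/\cos(\varphi)$ reduces to
\[
  \sin(\psi)\cos(\varphi)-2\sin(\psi-\varphi/2)\cos(\varphi/2)=0,
\]
which is precisely the defining equation for $\psi_2(\varphi)$. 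Finally, the monotonicity of $x(\psi)$ from Lemma~\ref{lemma-centre-sn-2}, together with evaluating the left–hand side at one convenient value (for instance the affine case $\psi=\varphi/2$) to pin the sign, shows that ``$\tp(\sp)$ stays in the closed upper half–plane'' is equivalent to $\psi\le\psi_2(\varphi)$, which is the meaning of ``$\sn$ lies to the right of $S_{1/\cos(\varphi)}$''.

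The step I expect to be the main obstacle is isolating the decisive edge: showing rigorously that, once $Q$ is above $\r$, the triangle $\tp(\trio)$ can fail to meet $\r$ only at $\An$ solely through the arc $\tp(\sp)$ dipping below at $\An$. Concretely, I must rule out any other crossing of $\r$ by the edges and confirm that the infinitesimal tangency condition at $\An$ already controls the global position of the arc $\tp(\sp)$. The identification of the threshold with the reference circle $S_{1/\cos(\varphi)}$ and the established monotonicity of $x(\psi)$ are exactly what make this tractable and convert the geometric condition into the stated inequality.
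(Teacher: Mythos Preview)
Your approach is correct and essentially the same as the paper's: both reduce via the conjugation symmetry to the question of whether $\tp(\trio)$ meets the real line only at $\An$, split into the condition $\Im(\tp(\An))>0$ for the edge $\tp(\sm)$ and a separate condition for the arc $\tp(\sp)$, and then identify the $\psi_2$ threshold using Proposition~\ref{prop-S-one-over-cos} and the monotonicity of $x(\psi)$ from Lemma~\ref{lemma-centre-sn-2}.

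The one step you assert without justification is the identification ``$\tp(\sp)$ tangent to $\r$ at $\An$'' $\Longleftrightarrow$ ``$\sn=S_{1/\cos(\varphi)}$''. The paper handles this not at $\An$ but at $\Ap$: it observes that, by the reflection fixing the segment $\sp$ and swapping $\An\leftrightarrow\Ap$, the question ``does $\tp(\sp)$ meet $\r=O\An$ away from $\An$'' becomes ``does $\sn$ meet the line $O\Ap$ away from $\Ap$'', and then Proposition~\ref{prop-S-one-over-cos} applies directly. Equivalently, conformality of $\tp$ gives $\angle(\tp(\sp),\sp)$ at $\An$ equal to $\angle(\sn,\sp)$ at $\Ap$, while the isosceles triangle $O\Ap\An$ gives $\angle(\r,\sp)$ at $\An$ equal to $\angle(O\Ap,\sp)$ at $\Ap$; matching these is exactly the tangency of $\sn$ to $O\Ap$. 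This is the missing link, and it is not the obstacle you flagged (that one---the tangent direction at $\An$ controlling the global position of the arc---is fine, by the elementary fact that a circular arc with one endpoint on $\r$ and the other strictly above can cross $\r$ at most once).
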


\begin{proof}
Let $O$ be the origin.
Note that if $\tp(\sm)\cup\tp(\sp)$ intersects the line~$O\An$
at some point distinct from~$\An$,
then $\tm(\sp)\cup\tm(\sm)$ must intersect the line~$O\An$ at the same point for symmetry reasons, 
hence we have found a point in $\tp(\trio)\,\cap\,\tm(\trio)$
which is distinct from~$\An$.

The borderline configuration where~$\tp(\sm)$ just touches~$O\An$ is when~$\Im(\tp(\An))=0$.
The arc~$\tp(\sm)$ does not intersects~$O\An$ if and only if~$\Im(\tp(\An))>0$.

Now let us check the conditions for $\Im(\tp(\An)>0$.
We have
\[
  \Im(\tp(\An))
  =\Im(r e^{i\psi})
  =r\sin(\psi).
\]
We know that $\sin(\psi)>0$.
According to Remark~\ref{rem-canon-bryo},
we have $r>0$ and hence $\Im(\tp(\An)>0$
if and only if~$\varphi/2+\psi<\pi/2$.

We now need to determine when the arc~$\tp(\sp)$ intersects~$O\An$ at a point other than~$\An$.
By symmetry, this is the case if and only if the arc~$\sn$ intersects the line~$O\Ap$ at a point other than~$\Ap$.  
The borderline configuration is where~$\sn$ is tangent to $O\Ap$.
Proposition~\ref{prop-S-one-over-cos} shows that the arc~$S_t$ with $t=1/\cos(\varphi)$ is tangent to $O\Ap$.
Let~$x=x(\psi)$ be the centre of the circle determined by the arc~$\sn$ as in Lemma~\ref{lemma-centre-sn-2}.
We conclude that $\sn\cap O\Ap=\{\Ap\}$ if and only if $\sn$ is situated to the right of the arc~$S_t$
which is equivalent to
\[x\le0\quad\text{or}\quad x\ge\frac{1}{\cos(\varphi)}.\]
The analysis of the function~$x=x(\psi)$
in Lemma~\ref{lemma-centre-sn-2}
shows that there exists a unique value~$\psi_2=\psi_2(\varphi)$
such that $x(\psi_2)=\frac{1}{\cos(\varphi)}$.
Moreover, $x\ge\frac{1}{\cos(\varphi)}$ if and only if $\varphi/2<\psi\le\psi_2$
and $x\le0$ if and only if $\psi<\varphi/2$.
In the case $\psi=\varphi/2$ (affine bryophylla), the edge~$\sn$ is the straight line segment~$\Ap\Am$ and hence only intersects~$O\Ap$ at the point~$\Ap$.
Hence $\sn\cap O\Ap=\{\Ap\}$
and therefore $\tp(\sp)\cap\r=\{\An\}$
if and only if $\psi\le\psi_2$.
Note that the value~$\psi_2$ satisfies the equation
\[x(\psi_2)=\frac{\sin(\psi_2)}{2\sin(\psi_2-\varphi/2)\cos(\varphi/2)}=\frac{1}{\cos(\varphi)},\]
hence $\psi=\psi_2$ is a solution of the equation
\[
  \sin(\psi)\cos(\varphi)-2\sin(\psi-\varphi/2)\cos(\varphi/2)=0.
  \qedhere
\]


\end{proof}

\subsubsection{When are $\tp(\trio)$ and~$\tm(\trio)$ contained in $\trio$?}


\begin{proposition}
\label{proposition-Farey-inclusion}
Assume that $\Br_{\varphi,\psi}$ satisfies the conditions of Proposition~\ref{proposition-Farey-separation}.
Then the conditions $\tp(\trio)\subset\trio$ and $\tm(\trio)\subset\trio$ are satisfied.
\end{proposition}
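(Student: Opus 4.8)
The plan is to prove $\tp(\trio)\subset\trio$ in detail; the inclusion $\tm(\trio)\subset\trio$ then follows by reflecting in the real axis, since complex conjugation is a symmetry of the whole configuration that interchanges $\tp$ with $\tm$ and the upper with the lower half-plane. Recall from the proof of Theorem~\ref{theorem-clasification}(1) that $\tp(\sn)=\sp$ and $\tp(\sm)\subset\sn$. Hence $\tp(\trio)$ is the curvilinear triangle with vertices $\An=\tp(\Ap)$, $\Ap=\tp(\Am)$ and $Q:=\tp(\An)\in\sn$, bounded by $\sp=\tp(\sn)$, by the sub-arc $\tp(\sm)$ of $\sn$ running from $\Ap$ to $Q$, and by the remaining edge $\tp(\sp)$ joining $Q$ to $\An$. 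The first two of these already lie on $\partial\trio$, so the entire difficulty is to control the single edge $\tp(\sp)$.

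First I would confine $\tp(\trio)$ to the closed upper half-plane. Under the hypotheses of Proposition~\ref{proposition-Farey-separation} we have $\Im(Q)=r\sin\psi>0$, so $Q$ lies strictly above $\r$; the segment $\sp=[\An,\Ap]$ and the sub-arc $\tp(\sm)\subset\sn$ from $\Ap$ to $Q$ lie in the closed upper half-plane and meet $\r$ only at $\An$; and the same proposition supplies $\tp(\sp)\cap\r=\{\An\}$. Thus the whole curve $\partial(\tp(\trio))$ lies in the closed upper half-plane and touches $\r$ only at $\An$, so the open domain $\tp(\trio)$ lies in the open upper half-plane. In particular $\tp(\trio)$ is automatically disjoint from $\sm$ and from the lower half of $\sn$, both of which lie in the closed lower half-plane.

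Next I would show that no part of $\partial\trio$ enters the open domain $\tp(\trio)$. The edges $\sp$ and $\tp(\sm)\subset\sn$ lie on $\partial(\tp(\trio))$ and so miss the open domain; by the previous paragraph so do $\sm$ and the lower half of $\sn$. The only remaining piece is the arc of $\sn$ beyond $Q$ (running from $Q$ to the point where $\sn$ meets $\r$). Here the key observation is that $\tp(\sp)$ lies on the circline $C:=\tp(\ell_+)$ while $\sn$ lies on $\ell_0=\tp(\ell_-)$; since the lines $\ell_+$ and $\ell_-$ meet exactly at $\An$ and at $\infty$, their images $C$ and $\ell_0$ meet exactly at $Q=\tp(\An)$ and at $\tp(\infty)$. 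As $\sp$ is a bounded segment not containing $\infty$, its image $\tp(\sp)$ is the arc of $C$ from $Q$ to $\An$ that avoids $\tp(\infty)$; therefore $\tp(\sp)\cap\ell_0=\{Q\}$, i.e.\ $\tp(\sp)$ meets $\sn$ only at $Q$. Combining this with the Edge Condition (Proposition~\ref{proposition-Farey-interior}), which gives $\sp\cap\sn=\{\Ap\}$, the arc of $\sn$ beyond $Q$ meets $\partial(\tp(\trio))$ only at $Q$; since this arc stays on the boundary circle $\ell_0$, along which the open domain lies strictly on one side, it can never penetrate the interior. Hence $\tp(\trio)\cap\partial\trio=\emptyset$.

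Finally I would close the argument topologically. As $\tp(\trio)$ is connected and disjoint from $\partial\trio$, it lies entirely in $\trio$ or entirely in the exterior of $\bar\trio$. To decide which, note that $\tp$ is conformal, hence orientation-preserving, and it maps the edge $\sn$ of $\partial\trio$ onto the edge $\sp$ respecting the boundary orientation of $\trio$; consequently $\tp(\trio)$ lies on the same side of $\sp$ as $\trio$, the two domains overlap, and therefore $\tp(\trio)\subset\trio$. The main obstacle in this scheme is the control of the third edge $\tp(\sp)$ carried out in the third paragraph: it is exactly there that the separation hypotheses ($\Im(Q)>0$ and $\tp(\sp)\cap\r=\{\An\}$) and the two-circle intersection count (locating the second intersection of $C$ and $\ell_0$ at the harmless point $\tp(\infty)$) are indispensable.
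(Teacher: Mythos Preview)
Your argument is correct and takes a genuinely different route from the paper's. The paper works edge-by-edge: after noting $\tp(\sn)=\sp$ and $\tp(\sm)\subset\sn$, it shows directly that the remaining arc $\tp(\sp)$ lies in $\bar\trio$ by an angle comparison --- the Separation hypothesis $\psi\le\psi_2(\varphi)$ is used to deduce $\beta:=\angle(\sp,\sn)\le\alpha/2$ where $\alpha=\angle(\sp,\sm)$, and from $\beta\le\alpha$ it concludes that $\tp(\sp)$ cannot cross $\sm$. You instead argue topologically: confine $\tp(\trio)$ to the upper half-plane via $\tp(\sp)\cap\r=\{\An\}$ (extracted from the proof of Proposition~\ref{proposition-Farey-separation}), show the open domain $\tp(\trio)$ misses $\partial\trio$, and finish by connectedness and orientation. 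Your route uses the Separation hypothesis more transparently and avoids the somewhat cryptic angle bookkeeping; the paper's version makes the quantitative geometry explicit.

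Two steps would benefit from an extra line. The sentence ``the open domain lies strictly on one side'' of $\ell_0$ is asserting that $\trio$ lies on one side of the line $\ell_-$, which is true but not immediate from the Edge Condition. You have already done the work to avoid this: since $\gamma\setminus\{Q\}$ misses $\partial(\tp(\trio))$, connectedness places it entirely inside or entirely outside; the interior angle of $\tp(\trio)$ at $Q$ is $\pi-\varphi<\pi$ while $\gamma$ is the continuation of $\tp(\sm)$ along $\ell_0$, so $\gamma$ starts outside the wedge and hence stays outside. For the final step, it helps to spell out the orientation check: traversing $\partial\trio$ positively one goes $\An\to\Ap$ on $\sp$ and $\Ap\to\Am$ on $\sn$; since $\tp(\Ap)=\An$, $\tp(\Am)=\Ap$, the orientation-preserving map $\tp$ carries the positively oriented $\sn$ to $\sp$ with the same positive orientation, so the two interiors lie on the same side of $\sp$. (You also implicitly use that $\tp(\trio)$ is bounded; this holds because $\tp^{-1}(\infty)\in\ell_0\setminus\sn$ lies outside $\bar\trio$.)
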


\begin{proof}
Note that $\tp(\sn)=\sp$.
The condition $\psi<\frac{\pi}{2}-\frac{\varphi}{2}$ implies
$\Im(\tp(\An))>0$ and therefore $\tp(\sm)\subset\sn$.
It remains to check that $\tp(\sp)\subset\trio$.
Note that $\tp(\sp)$ is an arc. 
First of all, since the angles of $\trio$ do not exceed~$\pi$, the arc $\tp(\sp)$ does not intersect $\sn\setminus\tp(\sm)$.

Now let us consider the angles
$\al=\angle(\sp,\sm)$ and $\beta=\angle(\sn,\sp)=\angle(\sn,\sm)$ 
in the triangle~$\trio$.
For $\psi=\psi_2(\varphi)$ we have that
the arc~$\tp(\sp)$ is tangent to $O\An$.
Therefore the angle between~$\tp(\sp)$ and $\tp(\sn)=\sp$ is equal to the angle~$\angle\Ap\An O=\al/2$.
The map~$\tp$ is conformal, hence $\angle(\tp(\sp),\tp(\sn))=\angle(\sp,\sn)=\beta$.
Thus we have $\beta=\al/2$ for~$\psi=\psi_2(\varphi)$.
For $\psi\le\psi_2(\varphi)$ we have $\beta\le\al/2$.
Now we note that when $\beta\le\al$, the arc $\tp(\sp)$ does not intersect~$\sm$, and hence it is entirely in $\trio$.
\end{proof}

\subsubsection{Conclusion of the Proof:}
Theorem~\ref{theorem-Farey-region} follows directly from Propositions~\ref{proposition-Farey-interior},
\ref{proposition-Farey-separation}, and~\ref{proposition-Farey-inclusion}.
Namely, we have the following:
\begin{itemize}
\item 
{\bf Edge Condition:}
By Proposition~\ref{proposition-Farey-interior},
$\trio$ is a curvilinear triangle with non-intersecting edges
if and only if
\[\psi\ge\psi_1,\]
where $\psi_1=\psi_1(\varphi)$ is the unique solution of the equation
\[\sin(\psi)+2\sin(\psi-\varphi/2)\cos(\varphi/2)=0.\]
\item 
{\bf Separation Condition:}
By Proposition~\ref{proposition-Farey-separation},
the images~$\tp(\trio)$ and~$\tm(\trio)$ only share one vertex
if and only if
\[
  \psi<\frac{\pi}{2}-\frac{\varphi}{2}
  \quad\text{and}\quad
  \psi\le\psi_2,
\]
where $\psi_2=\psi_2(\varphi)$ is the unique solution of the equation
\[\sin(\psi)\cos(\varphi)-2\sin(\psi-\varphi/2)\cos(\varphi/2)=0.\]
\item 
{\bf Inclusion Condition:}
By Proposition~\ref{proposition-Farey-inclusion},
if the conditions of Propositions~\ref{proposition-Farey-interior}
and~\ref{proposition-Farey-separation} are satisfied
then the images~$\tp(\trio)$ and~$\tm(\trio)$ are contained in~$\trio$.
\end{itemize}
These three conditions are precisely the conditions in the definition of a Farey canonical bryophyllum (Definition~\ref{def-farey-bryo}).




\noindent
The conditions $\psi\ge\psi_1(\varphi)$, $\psi\le\psi_2(\varphi)$ and $\psi<\frac{\pi}{2}-\frac{\varphi}{2}$ are equivalent to the first, second and third inequalities in the following system respectively:
$$
\left\{
\begin{array}{l}
\sin{\psi}+2\sin\Big(\psi-\frac{\varphi}{2}\Big)\cos\frac{\varphi}{2}\ge0;\\
\sin\psi\cos\varphi-2\sin\Big(\psi-\frac{\varphi}{2}\Big)\cos\frac{\varphi}{2}\ge 0;\\
\varphi + 2\psi <\pi.\\
\end{array}
\right.
$$

\subsection{Proof of Theorem~\ref{convergence-triangles}}
\label{convergence-triangles-proof}

Let $B=(\An,\Ap,\Am,\sp,\sm,\sn,\tp,\tm)$ be a canonical Farey bryophyllum.
Let $\trioB$ be the triangle bounded by circlines~$\sp$, $\sm$, $\sn$.
Let $x=(x_k)_{k=1}^\infty$ be an infinite binary sequence.
Consider the triangles $\trioB(x_1,\dots,x_n)$ as defined in 
Definition~\ref{def-nested-trio}.
Let $S_B(x_1,\dots,x_n)$ be the circumscribed circle of the triangle~$\trioB(x_1,\dots,x_n)$
and let $D_B(x_1,\dots,x_n)$ be the disc bounded by the circle~$S_B(x_1,\dots,x_n)$.
(For~$n=0$ the notation is $\trioB(x_1,\dots,x_n)=\trioB()=\trioB$, $S_B(x_1,\dots,x_n)=S_B()$, $D_B(x_1,\dots,x_n)=D_B()$.)

\subsubsection{Properties of the Iterates $\trioB(x_1,\dots,x_n)$}
\label{PropertiesIterates}

In this subsection we will investigate the properties of the triangles $\trioB(x_1,\dots,x_n)$ and their circumscribed circles $S_B(x_1,\dots,x_n)$:

\begin{lemma}
\label{lemma-nested-trioB}
Let $i\in\z_{\ge0}$.
\begin{enumerate}[(a)]
\item
\label{part-nested-trioB}
The triangles are nested,
i.e.\ 
$\trioB(x_1,\dots,x_{i+1})\subset\trioB(x_1,\dots,x_i)$.
\item
The triangles~$\trioB(x_1,\dots,x_i)$ and~$\trioB(x_1,\dots,x_{i+1})$ have exactly two vertices in common.
\item
The triangles~$\trioB(x_1,\dots,x_i)$ and~$\trioB(x_1,\dots,x_{i+2})$ have exactly one vertex in common.
\item
We have $\trioB(x_1,\dots,x_i)\subset D_B(x_1,\dots,x_i)$.
\item
\label{part-angles-trioB}
The intersection~$S_B(x_1,\dots,x_i)\cap S_B(x_1,\dots,x_{i+1})$ consists of exactly two points.
The intersection~$D_B(x_1,\dots,x_i)\cap D_B(x_1,\dots,x_{i+1})$ is bounded by two arcs,
and the interior angle between these two arcs does not depend on~$i$.
\item
The circles~$S_B(x_1,\dots,x_i)$ and~$S_B(x_1,\dots,x_{i+2})$ are tangent to each other at the shared vertex 
of the triangles~$\trioB(x_1,\dots,x_i)$ and~$\trioB(x_1,\dots,x_{i+2})$.
Moreover,
$D_B(x_1,\dots,x_{i+2})\subset D_B(x_1,\dots,x_i)$.
\end{enumerate}
\end{lemma}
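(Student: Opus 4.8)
The plan is to unfold the recursion in Definition~\ref{def-nested-trio}. Writing $F_1=\tp\circ\conj$ and $F_0=\tm\circ\conj$, one checks that $\trioB(x_1,\dots,x_n)=G_n(\trioB)$ with $G_n=F_{x_1}\circ\cdots\circ F_{x_n}$, where each $F_j$ is an anti-M\"obius map (complex conjugation composed with a M\"obius map), so $G_n$ is a bijection of $\hat{\c}=\c\cup\{\infty\}$ that sends circlines to circlines and preserves unsigned angles. Since the canonical triangle $\trioB$ is symmetric with respect to the real axis, $\conj(\trioB)=\trioB$, so $F_1(\trioB)=\tp(\trioB)$ and $F_0(\trioB)=\tm(\trioB)$, and the Inclusion and Separation Conditions read $\tpm(\trioB)\subset\trioB$ and $\tp(\trioB)\cap\tm(\trioB)=\{\An\}$. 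Because $G_i$ carries the pair $\big(\trioB,\ \trioB(x_{i+1})\big)$ onto $\big(\trioB(x_1,\dots,x_i),\ \trioB(x_1,\dots,x_{i+1})\big)$ and the pair $\big(\trioB,\ \trioB(x_{i+1},x_{i+2})\big)$ onto $\big(\trioB(x_1,\dots,x_i),\ \trioB(x_1,\dots,x_{i+2})\big)$, every statement that is invariant under a bijection of $\hat{\c}$ (inclusions, number of common vertices) or under a conformal/anti-conformal map (angles between circlines, tangency) will reduce to a finite check on the base triangle together with its children $\trioB(0),\trioB(1)$ and grandchildren $\trioB(x,y)$, $x,y\in\{0,1\}$. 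The reflection $\conj$ interchanges $\tp\leftrightarrow\tm$, so in each case it suffices to treat the sequences beginning with $1$.

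Parts (a)--(c) then follow from this reduction together with the bryophyllum identities $\tp(\sn)=\sp$, $\tm(\sn)=\sm$, $\tp(\Ap)=\An$, $\tp(\Am)=\Ap$ and their mirror images. For (a), $\trioB(x_{i+1})=\tau_{x_{i+1}}(\trioB)\subset\trioB$ by the Inclusion Condition, and applying the bijection $G_i$ yields the nesting. For (b), the vertices of $\trioB(1)=\tp(\trioB)$ are $\{\tp(\An),\An,\Ap\}=\{Q,\An,\Ap\}$ with $Q=\tp(\An)\in\mathrm{int}\,\sn$, so $\trioB$ and $\trioB(1)$ share exactly the two vertices $\An,\Ap$ (indeed the full edge $\sp=\tp(\sn)$); transporting by $G_i$ gives (b). For (c) I would compute the vertices of $\trioB(1,1)$ and $\trioB(1,0)$: using $\conj(\sn)=\sn$ and $\tp(\sn)=\sp$ one finds that the two \emph{new} vertices always land in the interiors of the edges of $\trioB$ (for instance, the vertices of $\trioB(1,1)$ are $Q\in\mathrm{int}\,\sn$, the point $\Ap$, and $\tp(\bar Q)\in\mathrm{int}\,\sp$), so exactly one vertex of $\trioB$ survives; transporting by $G_i$ gives (c).

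For the circle statements I would use that $S_B(x_1,\dots,x_i)=G_i(\text{unit circle})$, since the circumscribed circle of $\trioB$ through its vertices $1,e^{\pm i\varphi}$ is the unit circle. Two distinct circumscribed circles sharing two vertices meet in exactly those two points, which gives the first half of (e); and the interior angle of the lens $D_B(x_1,\dots,x_i)\cap D_B(x_1,\dots,x_{i+1})$ equals the angle between the unit circle and $\tau_{x_{i+1}}(\text{unit circle})$, which is the same for $\tau=\tp$ and $\tau=\tm$ by the $\conj$-symmetry and is manifestly independent of~$i$. For the tangency in (f) I would again reduce to the base: the circles $S_B()$ and $S_B(x_{i+1},x_{i+2})$ pass through the single shared vertex and must be shown tangent there. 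The mechanism is that each $F_j$ is orientation-reversing, so the signed angle picked up in passing from one circumscribed circle to the next alternates in sign and cancels over two steps; carrying this out for the four base pairs (two up to $\conj$) confirms tangency, and Proposition~\ref{place-of-t+} supplies exactly the tangency of the relevant base circle with the line through the shared vertex.

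The remaining, and hardest, point is orientation: a M\"obius map may send a bounded disc to the unbounded complementary region, so knowing $S_B(x_1,\dots,x_i)=G_i(\text{unit circle})$ does not by itself identify $D_B(x_1,\dots,x_i)=G_i(\mathbb{D})$ as the \emph{bounded} disc. What is automatic is $\trioB(x_1,\dots,x_i)=G_i(\trioB)\subset G_i(\mathbb{D})$, because $\trioB\subset\overline{\mathbb{D}}$ (the chords $\sp,\sm$ and, since $r\in(0,1)$ under the Farey inequalities, the inward arc $\sn$ all lie in the closed unit disc). Thus (d) amounts to showing $G_i(\mathbb{D})$ is the bounded component, i.e.\ that the pole $G_i^{-1}(\infty)$ lies outside $\mathbb{D}$; the containment $D_B(x_1,\dots,x_{i+2})\subset D_B(x_1,\dots,x_i)$ in (f) then follows from the tangency at the shared vertex together with the nesting $\trioB(x_1,\dots,x_{i+2})\subset\trioB(x_1,\dots,x_i)$, which forces the two bounded discs to be internally tangent. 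The main obstacle is therefore this pole localization. I would attack it by induction through the front-peeling $G_{i+1}=G_i\circ F_{x_{i+1}}$, noting that the complement $\hat{\c}\setminus\trioB$ is forward-invariant under each $F_j^{-1}=\conj\circ\tau_j^{-1}$ (because $\tau_j(\trioB)\subset\trioB$ and $\conj(\trioB)=\trioB$), which already yields $\infty\notin\trioB(x_1,\dots,x_i)$; upgrading this to $G_i^{-1}(\infty)\notin\mathbb{D}$ requires pinning down the poles of $\tpm$ from the explicit canonical formula and checking, via the Farey inequalities, that they remain outside each iterated disc. This last verification is the technical heart of the lemma.
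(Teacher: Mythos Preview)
Your reduction via $G_i=F_{x_1}\circ\cdots\circ F_{x_i}$ is exactly the paper's strategy, and your treatment of parts (a)--(e) matches it in substance.

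There are two genuine differences.

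\textbf{Part (f), tangency.} Your ``signed angle cancellation'' is not complete as stated. From~(e) one only knows the \emph{unsigned} equality $|\angle(S_0,S_1)|=|\angle(S_1,S_2)|$; the orientation reversal of $F_{x_1}$ flips the sign of the transported pair, but you still have to determine the sign of the untransported pair $(S_0,S_1')$ before you can conclude that the two contributions cancel rather than add. Your appeal to Proposition~\ref{place-of-t+} does not settle this: that proposition concerns the circle through $O,\An,\Am,\tp(\An)$ and its tangency with the \emph{line} $\An\Ap$, which is none of the circumscribed circles $S_0,S_1,S_2$ in play. The paper argues differently and avoids the sign issue altogether: at the shared vertex $P$, both $S_0$ and $S_2$ make the same angle with $S_1$ by~(e), so either $S_0$ and $S_2$ are tangent at $P$ or the two lenses $D_0\cap D_1$ and $D_1\cap D_2$ lie on opposite sides of $S_1$; the second alternative is ruled out because $T_2\subset T_1\subset T_0$ together with $T_j\subset D_j$ force $T_2\subset D_0\cap D_1\cap D_2$. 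The inclusion $D_2\subset D_0$ is then obtained by eliminating the remaining tangency configurations via the vertex count, essentially as you propose.

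\textbf{Part (d), disc orientation.} You are right that the reduction does not by itself identify $G_i(\bar{\mathbb D})$ with the \emph{bounded} disc $D_B(x_1,\dots,x_i)$; the paper passes over this quickly. However, your inductive pole-tracking through all $G_i$ is heavier than necessary, and the invariance you can actually prove easily (that the complement of $\trioB$ is forward-invariant under $F_j^{-1}$) only gives $G_i^{-1}(\infty)\notin\trioB$, not $G_i^{-1}(\infty)\notin\bar{\mathbb D}$; upgrading this would require $\tpm(\bar{\mathbb D})\subset\bar{\mathbb D}$, which is not obviously true. A cleaner route is to run the entire argument for~(f) with the a priori generalised discs $\tilde D_i:=G_i(\bar{\mathbb D})$: the reduction is then tautological and the analogue of~(d), namely $T_i\subset\tilde D_i$, is automatic since $\trioB\subset\bar{\mathbb D}$. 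This yields $\tilde D_{i+2}\subset\tilde D_i$ for all $i$. Since $\tilde D_0=\bar{\mathbb D}$ is bounded, all even $\tilde D_i$ are bounded; a single explicit check that $\tilde D_1=\tpm(\bar{\mathbb D})$ is bounded (equivalently that the pole of $\tpm$ lies outside the closed unit disc) then makes all $\tilde D_i$ bounded, whence $\tilde D_i=D_B(x_1,\dots,x_i)$ and~(d) follows.
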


\begin{proof}
The triangles
$\trioB(x_1,\dots,x_i)$, $\trioB(x_1,\dots,x_{i+1})$, $\trioB(x_1,\dots,x_{i+2})$ are the images under a conformal map of $T_0=\trioB()$, $T_1=\trioB(y_1)$, $T_2=\trioB(y_1,y_2)$,
where $(y_1,y_2)=(x_{i+1},x_{i+2})$.
Therefore the corresponding circles and discs are the images under a conformal map of $S_0=S_B()$, $S_1=S_B(y_1)$, $S_2=S_B(y_1,y_2)$ and $D_0=D_B()$, $D_1=D_B(y_1)$, $D_2=D_B(y_1,y_2)$ respectively.
Inclusion of sets and tangency of circles is preserved under conformal maps, 
hence it is sufficient to show the statements of the lemma for~$i=0$.
\begin{enumerate}[(a)]
\item
By Definition~\ref{def-farey-bryo}, we have~$T_1=(\tpm\circ\conj)(T_0)\subset T_0$.
\item
The triangles~$T_0$ and~$T_1$ have exactly two vertices in common, either $\An,\Ap$ or $\An,\Am$.
\item
The triangles~$T_0$ and~$T_2$ have exactly one vertex in common.
\item
The triangle~$T_0$ is contained in its circumscribed disc~$D_0$.
\item
We have $S_0=S_B()$ and~$S_1=S_B(y)$ with~$y\in\{0,1\}$.
The intersection of the circles~$S_0\cap S_1$ consists of two points. 
The intersection of the discs~$D_0=D_B()$ and~$D_1=D_B(y)$
is bounded by two arcs.
The angle between these arcs does not depend on~$y$
since the circle~$S_B()$ forms the same angle with the circles~$S_B(0)$ and~$S_B(1)$ by symmetry.
\item
By construction, the triangles~$T_0,T_1,T_2$, share a vertex, say~$P$.
Note that $P$ is a vertex of both intersections~$D_0\cap D_1$
and $D_1\cap D_2$. 
By part~(\ref{part-angles-trioB}),
the circles~$S_0$ and~$S_2$ form the same angle with the circle~$S_1$, hence either $S_0$ and~$S_2$ are tangent
or the interiors of the intersections~$D_0\cap D_1$ and~$D_1 \cap D_2$ are on different sides of the circle $S_1$.
The latter case is impossible since 
$T_2\subset T_1\subset T_0$ by part~(\ref{part-nested})
and $T_i\subset D_i$ by part~(\ref{part-T-in-D})
imply $T_2\subset D_0\cap D_1 \cap D_2$.
Therefore the circles~$S_0$ and~$S_2$ must be tangent at the point~$P$,
hence the following configurations are possible:
$D_0\subsetneq D_2$, $D_2\subset D_0$ or $D_0\cap D_2$ is a one-point set.
At least one vertex of~$T_0$ is not shared with~$T_2$ and is on~$S_0$, hence $D_0\subsetneq D_2$ is impossible.
We know that $T_2\subset D_0\cap D_2$ by part~(\ref{part-nested}),
hence $D_0\cap D_2$ cannot consist of only one point.
Therefore the only possible configuration is $D_2\subset D_0$.\qedhere
\end{enumerate}
\end{proof}

\begin{lemma}
\label{lemma-lexicographic}
The binary sequences~$x$ of length~$n$
in the lexicographic order from~$(1,\dots,1)$ to~$(0,\dots,0)$ correspond to the $n$-th iterates of~$\trioB$ from top to bottom, i.e.\ from $\Ap$ to $\Am$.
\end{lemma}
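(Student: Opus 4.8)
The plan is to argue by induction on the length $n$, using the recursive Definition~\ref{def-nested-trio} together with the symmetry relation $\tm=\conj\circ\tp\circ\conj$. This relation holds for a canonical bryophyllum because the whole configuration is symmetric in the real axis: the explicit formulas in Definition~\ref{def-canon-bryo} show that $\tm(z)=\overline{\tp(\bar z)}$. For $n=0$ there is a single triangle, and for $n=1$ we have $\trioB(1)=\tp(\trioB)$, whose vertices $\{\tp(\An),\An,\Ap\}$ place it next to the top vertex $\Ap$, and $\trioB(0)=\tm(\trioB)$, whose vertices $\{\tm(\An),\An,\Am\}$ place it next to the bottom vertex $\Am$; the two share only $\An$. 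Since $(1)>(0)$ lexicographically, the base case holds.

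For the inductive step I would first split the $2^n$ sequences into those beginning with $1$ and those beginning with $0$; all of the former are lexicographically larger than all of the latter. The corresponding triangles lie in $\trioB(1)=\tp(\trioB)$ and $\trioB(0)=\tm(\trioB)$ respectively, and by the $n=1$ analysis $\trioB(1)$ sits entirely above $\trioB(0)$, the two meeting only at $\An$. Hence every sequence starting with $1$ produces a triangle above every sequence starting with $0$, matching the top block of the lexicographic order.

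The heart of the argument, and the step I expect to be most delicate, is tracking how $\tp\circ\conj$ rearranges the inductively ordered chain of depth-$(n-1)$ triangles inside $\trioB$ when it is mapped into $\trioB(1)$. By the inductive hypothesis the triangles $\trioB(y)$, $y\in\{0,1\}^{n-1}$, run from $\Ap$ (for $y=(1,\dots,1)$) down to $\Am$ (for $y=(0,\dots,0)$). The reflection $\conj$ reverses this top-to-bottom order, interchanging the extreme vertices $\Ap$ and $\Am$; the map $\tp$ then reverses it a second time, since $\tp(\Ap)=\An$ and $\tp(\Am)=\Ap$ again swap the roles of the extreme vertices. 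The two reversals cancel, so that inside $\trioB(1)$ the triangle $\trioB(1,y)$ having $\Ap$ as a vertex (the top vertex of $\trioB(1)$) is the one with $y$ lexicographically largest, while the one having $\An$ as a vertex (its bottom vertex) is the one with $y$ smallest. Thus within $\trioB(1)$ the top-to-bottom order is exactly the lexicographic order of $(1,y)$ from $(1,1,\dots,1)$ to $(1,0,\dots,0)$.

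The same bookkeeping for $\tm\circ\conj$, carried out directly or deduced from $\tm=\conj\circ\tp\circ\conj$ together with the real-axis symmetry, shows that inside $\trioB(0)$ the triangle $\trioB(0,y)$ having $\An$ as a vertex (now the top vertex of $\trioB(0)$) has $y$ lexicographically largest, and the one having $\Am$ as a vertex has $y$ smallest, giving the lexicographic order of $(0,y)$ from $(0,1,\dots,1)$ to $(0,0,\dots,0)$. Concatenating the two blocks at the shared vertex $\An$, where the bottom triangle $\trioB(1,0,\dots,0)$ of the upper half meets the top triangle $\trioB(0,1,\dots,1)$ of the lower half, exactly as $(1,0,\dots,0)$ is the lexicographic predecessor of $(0,1,\dots,1)$, yields the full top-to-bottom ordering and completes the induction. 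The only genuine care needed is the orientation bookkeeping of the two order-reversing factors $\conj$ and $\tpm$; the nesting and vertex-sharing that make "top to bottom" a well-defined total order are already supplied by Lemma~\ref{lemma-nested-trioB}.
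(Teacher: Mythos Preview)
Your proposal is correct and follows the same inductive scheme as the paper's proof: establish the $n=1$ case from the vertex images, split the depth-$n$ triangles by their first digit into $\trioB(1)$ and $\trioB(0)$, and show that $(\tpm\circ\conj)$ carries the inductively ordered chain of depth-$(n-1)$ triangles to a correctly ordered chain inside each half. The paper states this last point tersely (``in the order from $\Ap$ to $\An$'' and ``from $\An$ to $\Am$''), whereas you spell it out as the cancellation of two order reversals, one from $\conj$ and one from $\tpm$; this is exactly the content of the paper's assertion, just made explicit. One cosmetic remark: when you write $\trioB(1)=\tp(\trioB)$ in the base case you are silently using $\conj(\trioB)=\trioB$, which holds for canonical bryophylla but is worth stating.
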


\begin{proof}
First consider~$n=1$. 
The triangles $\trioB(1)=(\tp\circ\conj)(\trioB)$ and $\trioB(0)=(\tm\circ\conj)(\trioB)$ contain the vertices~$\Ap$ and~$\Am$ respectively.

\bigskip
Now assume that the statement holds for $n=k$
and let us prove it for $n=k+1$.
The operator $(\tp\circ\conj)$ sends all triangles
$\trioB(x_1,\dots,x_k)$ to the upper triangle~$\tp(\trioB)$
in the order from $\Ap$ to $\An$.
The operator $(\tm\circ\conj)$ sends all triangles $\trioB(x_1,\dots,x_k)$ to the lower triangle~$\tm(\trioB)$
in the order from $\An$ to $\Am$.
Combining these two sequences of triangles,
we see that triangles $\trioB(x_1,\dots,x_k,x_{k+1})$
in the lexicographic order 
enumerate all $(k+1)$-th iterates from~$\Ap$ to~$\Am$.
\end{proof}

\subsubsection{Properties of the Sequence of Circumscribed Circles}
\label{CircumscribedSequence}

We start with some notation that we will use in the proof:
For~$i\in\z_{\ge0}$,
consider the triangle $T_i=\trioB(x_1,\dots,x_i)$, the circle $S_i=S_B(x_1,\dots,x_i)$, and the disc $D_i=D_B(x_1,\dots,x_i)$.
With this notation, Lemma~\ref{lemma-nested-trioB} becomes:

\begin{corollary}
\label{corollary-nested}
Let~$i\in\z_{\ge0}$.
Then the following statements hold:
\begin{enumerate}[(a)]
\item
\label{part-nested}
The sequence of triangles is nested,
i.e.\ $T_{i+1}\subset T_{i}$.
\item
The triangles~$T_i$ and~$T_{i+1}$ have exactly two vertices in common.
\item
The triangles~$T_i$ and~$T_{i+2}$ have exactly one vertex in common.
\item
\label{part-T-in-D}
We have $T_i\subset D_i$.
\item
\label{part-petals}
The intersection~$S_i\cap S_{i+1}$ consists of exactly two points.
The intersection~$D_i\cap D_{i+1}$ is bounded by two arcs,
and the interior angle between these two arcs
does not depend on~$i$.
\item
\label{part-nested-discs}
The circles~$S_i$ and~$S_{i+2}$ are tangent to each other at the shared vertex of the triangles~$T_i$ and~$T_{i+2}$.
Moreover, $D_{i+2}\subset D_i$.
\end{enumerate}
\end{corollary}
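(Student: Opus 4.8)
The plan is to note that Corollary~\ref{corollary-nested} is a verbatim restatement of Lemma~\ref{lemma-nested-trioB} under the abbreviations $T_i=\trioB(x_1,\dots,x_i)$, $S_i=S_B(x_1,\dots,x_i)$, and $D_i=D_B(x_1,\dots,x_i)$ fixed immediately before the statement. Under these identifications each of the six items transcribes directly: the nesting $\trioB(x_1,\dots,x_{i+1})\subset\trioB(x_1,\dots,x_i)$ becomes $T_{i+1}\subset T_i$, the inclusion $\trioB(x_1,\dots,x_i)\subset D_B(x_1,\dots,x_i)$ becomes $T_i\subset D_i$, and the final tangency-and-nesting assertion about $S_B(x_1,\dots,x_i)$, $S_B(x_1,\dots,x_{i+2})$ and $D_B(x_1,\dots,x_{i+2})\subset D_B(x_1,\dots,x_i)$ becomes part (f). Thus the only step I would carry out is to record this substitution; no new argument is needed.

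For completeness I would point to where the content actually resides, namely in the proof of Lemma~\ref{lemma-nested-trioB}. There the decisive reduction is that, for every fixed $i$, the triangles $T_i,T_{i+1},T_{i+2}$ (and their circumscribed circles and discs) are the images under one conformal map of the base triples $T_0,T_1,T_2$ with $(y_1,y_2)=(x_{i+1},x_{i+2})$. Since set inclusion, equality of angles, and tangency of circlines are all conformally invariant, every claim reduces to the case $i=0$, where it follows from the Farey Inclusion and Separation conditions of Definition~\ref{def-farey-bryo} together with the angle identity of Proposition~\ref{angles}.

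Consequently there is essentially no obstacle at the level of the corollary itself: the single thing to verify is that the abbreviated symbols are used consistently, in particular that $S_i$ and $D_i$ are genuinely the circumscribed circle and disc of $T_i$, so that items (d)--(f) transfer correctly. The genuinely delicate step lives one level up, in the last item of Lemma~\ref{lemma-nested-trioB}: one must exclude the configuration in which the two lens-shaped regions $D_0\cap D_1$ and $D_1\cap D_2$ lie on opposite sides of $S_1$. This is forced out by the nesting $T_2\subset T_1\subset T_0$ together with $T_i\subset D_i$, which puts $T_2\subset D_0\cap D_1\cap D_2$ and hence pins down $D_2\subset D_0$ and the tangency of $S_0$ and $S_2$ at the shared vertex.
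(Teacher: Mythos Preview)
Your proposal is correct and matches the paper exactly: the paper introduces the abbreviations $T_i,S_i,D_i$ and then simply states ``With this notation, Lemma~\ref{lemma-nested-trioB} becomes:'' before the corollary, giving no separate proof. Your additional commentary on where the real content lies (in the proof of Lemma~\ref{lemma-nested-trioB}, especially the exclusion argument for part~(f)) is accurate but goes beyond what the paper itself records at this point.
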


\noindent
Let $O_i$ and $r_i$ be the centre and the radius of the circle~$S_i$

\begin{corollary}
\label{LemmaCenterRadius}
The following two statements hold:
\begin{enumerate}[(i)]
\item
The sequence of points $(O_{2j})$ converges to some point~$\Oeven$
and the sequence of radii $(r_{2j})$ converges to some number $\reven$.
\item
The sequence of points $(O_{2j+1})$ converges to some point~$\Oodd$
and the sequence of radii $(r_{2j+1})$ converges to some number~$\rodd$.
\end{enumerate}
\end{corollary}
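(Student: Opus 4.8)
The plan is to exploit the nested, internally tangent structure of the even- and odd-indexed circumscribed discs furnished by Corollary~\ref{corollary-nested}, and to extract convergence of both centres and radii from a single telescoping estimate. I will carry out the argument for the even subsequence $(O_{2j},r_{2j})$; the odd case is word-for-word identical.

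By Corollary~\ref{corollary-nested}, part~(\ref{part-nested-discs}), the discs satisfy $D_{2j+2}\subset D_{2j}$ and the bounding circles $S_{2j}$ and $S_{2j+2}$ are tangent at the vertex shared by the triangles $T_{2j}$ and $T_{2j+2}$. First I record that the nesting $D_{2j+2}\subset D_{2j}$ forces $r_{2j+2}\le r_{2j}$, so $(r_{2j})_j$ is non-increasing and bounded below by~$0$; hence it converges to some $\reven\ge0$. Next I use the elementary fact that two internally tangent circles have their centres and the point of tangency collinear, with the distance between the centres equal to the difference of the radii. Since $D_{2j+2}\subset D_{2j}$, the tangency in part~(\ref{part-nested-discs}) is internal, and therefore
\[
  |O_{2j}-O_{2j+2}|=r_{2j}-r_{2j+2}.
\]

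The estimate now telescopes. For $m<n$ the triangle inequality gives
\[
  |O_{2m}-O_{2n}|
  \le\sum_{j=m}^{n-1}|O_{2j}-O_{2j+2}|
  =\sum_{j=m}^{n-1}(r_{2j}-r_{2j+2})
  =r_{2m}-r_{2n}.
\]
Because $(r_{2j})$ converges, the right-hand side tends to~$0$ as $m,n\to\infty$, so $(O_{2j})$ is a Cauchy sequence in the plane and converges to some point $\Oeven$. This establishes statement~(i), and the same argument applied to the odd-indexed discs yields the limits $\Oodd$ and $\rodd$ of statement~(ii). The one point that must be verified with care is that the tangency supplied by Corollary~\ref{corollary-nested} is internal rather than external, so that the centre distance is $r_{2j}-r_{2j+2}$ and not $r_{2j}+r_{2j+2}$; this is guaranteed by the inclusion $D_{2j+2}\subset D_{2j}$, after which the telescoping closes the argument with no further obstacle.
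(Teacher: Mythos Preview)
Your proof is correct and follows the same approach as the paper: nesting gives monotonicity of the radii, and then a distance estimate between successive centres yields a Cauchy sequence. You make the centre estimate more explicit than the paper does---using internal tangency to get $|O_{2j}-O_{2j+2}|=r_{2j}-r_{2j+2}$ exactly, whereas the paper only alludes to ``a uniform estimate'' (which one can also obtain as an inequality from the inclusion $D_{2j+2}\subset D_{2j}$ alone)---but the structure of the argument is the same.
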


\begin{proof}
According to Lemma~\ref{corollary-nested}(\ref{part-nested-discs}),
the sequences of discs $(D_{2j})$ and $(D_{2j+1})$ are nested.
This implies that the sequences $(r_{2j})$ and $(r_{2j+1})$ of their radii are decreasing and non-negative,
therefore they converge to some limits
\[
  \reven=\lim\limits_{j\to\infty}\,r_{2j}
  \quad\text{and}\quad
  \rodd=\lim\limits_{j\to\infty}\,r_{2j+1}.
\]
We can use the limit~$\reven$ to provide a uniform estimate
on the distances between the centers~$O_{2j}$,
hence the sequence~$(O_{2j})$ is Cauchy and therefore converges to some point~$\Oeven$.
A similar argument works for the sequence of points $(O_{2i+1})$.
\end{proof}

\noindent
As a direct consequence of the above we have the following corollary:

\begin{corollary}
\label{LemmaLimitCircles}
The following statements hold:
\begin{enumerate}[(i)]
\item
The intersection~$\Deven$ of all discs $D_{2j}$ is the disc centered at $\Oeven$ of radius~$\reven$.
\item
The intersection~$\Dodd$ of all discs $D_{2j+1}$ is the disc centered at $\Oodd$ of radius~$\rodd$.
\item
Let $\Seven$ and~$\Sodd$ be the boundary circles of the disc~$\Deven$ and~$\Dodd$ respectively,
then the intersection~$\Seven\cap\Sodd$
consists of either one or two points.
\end{enumerate}
\end{corollary}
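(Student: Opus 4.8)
The plan is to treat parts (i) and (ii) as a routine consequence of the nesting established in Corollary~\ref{corollary-nested} together with the convergence of centres and radii from Corollary~\ref{LemmaCenterRadius}, and to concentrate the actual work on part (iii), where the point is to show that the two limiting circles $\Seven$ and $\Sodd$ genuinely meet but do not coincide.

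For (i) I would argue as follows. Iterating Corollary~\ref{corollary-nested}(\ref{part-nested-discs}) shows that the closed discs $D_{2j}$ form a decreasing sequence, so the containment $\bar D(O_{2k},r_{2k})\supseteq\bar D(O_{2j},r_{2j})$ for $j\ge k$ is equivalent to the scalar inequality $|O_{2j}-O_{2k}|+r_{2j}\le r_{2k}$. Letting $j\to\infty$ and using $O_{2j}\to\Oeven$, $r_{2j}\to\reven$ gives $|\Oeven-O_{2k}|+\reven\le r_{2k}$, that is $\bar D(\Oeven,\reven)\subseteq D_{2k}$ for every $k$, hence $\bar D(\Oeven,\reven)\subseteq\bigcap_j D_{2j}$. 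Conversely, any $p$ in the intersection satisfies $|p-O_{2j}|\le r_{2j}$ for all $j$, and passing to the limit yields $|p-\Oeven|\le\reven$. Thus $\Deven=\bigcap_j D_{2j}=\bar D(\Oeven,\reven)$, the claimed disc; part (ii) is identical with the odd subsequence.

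For (iii) the key input is Corollary~\ref{corollary-nested}(\ref{part-petals}): consecutive circles $S_i$ and $S_{i+1}$ cross transversally in two points at an interior angle $\theta\in(0,\pi)$ that is independent of $i$. I would encode this through the M\"obius-invariant quantity
\[
\delta_i=\frac{|O_i-O_{i+1}|^2-r_i^2-r_{i+1}^2}{2\,r_i r_{i+1}},
\]
which for circles of positive radius lies in $(-1,1)$ exactly when they cross and is a (cosine-type) function of the crossing angle alone; its constancy, $\delta_i\equiv\delta^\ast\in(-1,1)$, follows either from the $i$-independence of $\theta$ or, with no sign ambiguity, from the fact established in the proof of Lemma~\ref{lemma-nested-trioB} that each pair $(S_i,S_{i+1})$ is a conformal image of $(S_0,S_1)$. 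Clearing denominators gives the identity $|O_i-O_{i+1}|^2-r_i^2-r_{i+1}^2=2\delta^\ast r_i r_{i+1}$, which survives passage to the limit along $i=2j$; with $O_{2j}\to\Oeven$, $O_{2j+1}\to\Oodd$, $r_{2j}\to\reven$, $r_{2j+1}\to\rodd$ it yields
\[
|\Oeven-\Oodd|^2=\reven^2+\rodd^2+2\delta^\ast\reven\rodd.
\]
When $\reven,\rodd>0$ this reads $|\reven-\rodd|<|\Oeven-\Oodd|<\reven+\rodd$, so $\Seven$ and $\Sodd$ meet in exactly two points, and it simultaneously forbids $\Seven=\Sodd$, since $\Oeven=\Oodd$ with $\reven=\rodd>0$ would force $\delta^\ast=-1$. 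If instead a limiting radius vanishes, say $\reven=0$, then $\Seven$ is the single point $\Oeven$ and the same identity gives $|\Oeven-\Oodd|=\rodd$, i.e.\ $\Oeven\in\Sodd$, so $\Seven\cap\Sodd=\{\Oeven\}$ is one point; the cases $\rodd=0$ and $\reven=\rodd=0$ are analogous. Hence the intersection always consists of one or two points.

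I expect the main obstacle to be part (iii): verifying that the constancy of the crossing configuration is faithfully captured by a quantity that is both continuous in the circle data and correctly degenerate as a radius tends to zero, and then keeping the positive-radius case cleanly separated from the degenerate ones so that neither an empty intersection nor a coincidence of the two limiting circles slips through. By contrast, the nesting and the limiting relations for the centres and radii are immediate from Corollaries~\ref{corollary-nested} and~\ref{LemmaCenterRadius}.
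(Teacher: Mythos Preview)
Your proof is correct and follows essentially the same strategy as the paper: both exploit that consecutive circles $S_i,S_{i+1}$ are conformal images of the fixed pair $(S_0,S_1)$, so their crossing configuration is constant, and then pass this to the limit. The paper dispatches (i)--(ii) as ``straightforward'' and argues (iii) informally via the constant intersection angle $\angle(S_i,S_{i+1})$; you make the same idea precise by encoding the configuration through the inversive distance $\delta_i$, clearing denominators to obtain a polynomial identity that survives the limit, and then separately treating the degenerate cases $\reven=0$ or $\rodd=0$, which the paper does not address explicitly. Your version is more careful but not a genuinely different route.
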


\begin{proof}
The proofs of statements~$(i)$ and~$(ii)$ are straightforward.
For~$(iii)$, recall that Corollary~\ref{corollary-nested}(\ref{part-petals}) implies that the intersection~$S_i\cap S_{i+1}$
consists of exactly two points
and the angle $\angle(S_i,S_{i+1})$ does not depend on~$i$.
Hence~$\Seven\cap\Sodd\ne\emptyset$ and $\angle(\Seven,\Sodd)=\angle(S_0,S_1)$.
The circles~$S_0$ and~$S_1$ intersect at two distinct vertices
of the triangle~$T_1$.
The third vertex of $T_1$ lies strictly in the interior of the disc~$D_0$, and hence it is not on~$S_0$.
Therefore the angle $\angle(S_0,S_1)=\angle(\Seven,\Sodd)$ is not an integer multiple of~$\pi$.
Thus the circles~$\Seven$ and~$\Sodd$ are not disjoint and do not coincide, so their intersection must consist of one or two points.
\end{proof}

Let us enumerate all vertices of the triangles~$T_i$,
counting every vertex exactly once.
Recall that every two subsequent triangles~$T_i$ and~$T_{i+1}$ have exactly two vertices in common.
Denote by~$P_{-1}$ the vertex of~$T_0$ that is not shared with~$T_1$.
Let $P_0$ and~$P_1$ be the two vertices
shared between ~$T_0$ and~$T_1$.
For every~$i\in\z$, $i\ge 2$, we define $P_i$ inductively to be the vertex of~$T_{i-1}$ not shared with~$T_{i-2}$.


\begin{corollary}
The sequence of vertices~$(P_i)$ is bounded and all its limiting points are contained in~$\Seven\cap\Sodd$.
\end{corollary}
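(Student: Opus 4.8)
The plan is to determine exactly which circumscribed circles each vertex $P_i$ lies on, and then to pass to a limit along a convergent subsequence using the convergence of centres and radii from Corollary~\ref{LemmaCenterRadius}.

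First I would fix the incidence pattern relating the enumerated vertices $P_i$ to the triangles $T_i$. Unwinding the definition of the $P_i$ together with Corollary~\ref{corollary-nested}(b),(c), one checks by induction that
\[
  T_i=\{P_{i-1},P_i,P_{i+1}\}\qquad\text{for all }i\ge 0.
\]
The base cases $T_0=\{P_{-1},P_0,P_1\}$ and $T_1=\{P_0,P_1,P_2\}$ follow directly from the definition of $P_{-1},P_0,P_1,P_2$. For the inductive step, the two vertices shared by $T_i$ and $T_{i+1}$ must be $P_i$ and $P_{i+1}$: were they $P_{i-1}$ and $P_i$, the triangles $T_{i-1}$ and $T_{i+1}$ would share two vertices, contradicting Corollary~\ref{corollary-nested}(c); the new vertex $P_{i+2}$ then completes $T_{i+1}=\{P_i,P_{i+1},P_{i+2}\}$. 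In particular each $P_i$ is a vertex of the three triangles $T_{i-1},T_i,T_{i+1}$ and therefore lies on each of their circumscribed circles:
\[
  P_i\in S_{i-1}\cap S_i\cap S_{i+1}.
\]
Boundedness is then immediate, since every $P_i$ is a vertex of some $T_j$, and by Corollary~\ref{corollary-nested}(a),(d) all these triangles are nested inside $T_0\subset D_0$, so the whole sequence lies in the bounded disc $D_0$.

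For the statement on limiting points the decisive observation is that among the three consecutive indices $i-1,i,i+1$ at least one is even and at least one is odd. Hence for each $i$ I can select an even index $a(i)$ and an odd index $b(i)$ in $\{i-1,i,i+1\}$ with $P_i\in S_{a(i)}$ and $P_i\in S_{b(i)}$, which means
\[
  |P_i-O_{a(i)}|=r_{a(i)}\qquad\text{and}\qquad |P_i-O_{b(i)}|=r_{b(i)}.
\]
Given any limiting point $P$ of $(P_i)$, say $P_{i_k}\to P$, the bounds $|a(i_k)-i_k|\le 1$ and $|b(i_k)-i_k|\le 1$ force $a(i_k)\to\infty$ through even values and $b(i_k)\to\infty$ through odd values. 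Corollary~\ref{LemmaCenterRadius} then gives $O_{a(i_k)}\to\Oeven$, $r_{a(i_k)}\to\reven$ and $O_{b(i_k)}\to\Oodd$, $r_{b(i_k)}\to\rodd$. Letting $k\to\infty$ in the two displayed equalities and using continuity of the Euclidean distance yields $|P-\Oeven|=\reven$ and $|P-\Oodd|=\rodd$, that is, $P\in\Seven\cap\Sodd$.

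The only genuinely delicate step is the first one: correctly matching the enumerated vertices to the circles $S_{i-1},S_i,S_{i+1}$ and verifying that each index simultaneously lands on a circle of each parity. Once this combinatorial incidence is secured, the analytic conclusion is a routine passage to the limit, because the distance from $P_{i_k}$ to the chosen centre is exactly the corresponding radius, and both the centres and the radii converge by Corollary~\ref{LemmaCenterRadius}.
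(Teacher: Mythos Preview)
Your overall strategy---pin down which circumscribed circles each $P_i$ lies on, then use $|P_i-O_j|=r_j$ together with the convergence of centres and radii---is sound and in fact cleaner than the paper's $\varepsilon$-neighbourhood argument. The boundedness part is fine.

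However, the intermediate claim $T_i=\{P_{i-1},P_i,P_{i+1}\}$ is \emph{false}, not merely under-argued. Take the sequence $x=(1,1,1,\dots)$. Since $(\tp\circ\conj)(\Ap)=\tp(\Am)=\Ap$, the vertex $\Ap$ is a fixed point of $\tp\circ\conj$ and hence belongs to \emph{every} $T_i$. So the third vertex of $T_i$ (besides $P_i$ and $P_{i+1}$) is always $\Ap$, not $P_{i-1}$. Concretely, with $P_0=\An$, $P_1=\Ap$ one computes $T_2=\{P_1,P_2,P_3\}$ but $T_3=\{P_1,P_3,P_4\}$, so $P_2\notin T_3$ and the claim $P_2\in S_3$ fails. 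The flaw in your induction is that you only excluded the case where the shared vertices of $T_i$ and $T_{i+1}$ are $\{P_{i-1},P_i\}$; you did not exclude $\{P_{i-1},P_{i+1}\}$, and that case does occur.

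The fix is easy and actually simplifies your argument. What you really need is only $P_i\in S_{i-1}\cap S_i$ for $i\ge1$, and this is true: $P_i\in T_{i-1}$ by definition, and $P_i\in T_i$ because the two vertices of $T_{i-1}$ shared with $T_i$ cannot both lie in $T_{i-2}$ (else $T_i$ and $T_{i-2}$ would share two vertices, contradicting Corollary~\ref{corollary-nested}(c)), so one of them must be the vertex $P_i$ of $T_{i-1}$ not in $T_{i-2}$. Since $i-1$ and $i$ already have opposite parities, you can take $\{a(i),b(i)\}=\{i-1,i\}$ and your limiting argument goes through unchanged.
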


\begin{proof}
Corollary~\ref{LemmaLimitCircles}$(iii)$ implies that $\Seven\cap\Sodd$ consists of one or two points.
Corollaries~\ref{LemmaCenterRadius} and~\ref{LemmaLimitCircles} imply that for every~$\ve>0$ there exists~$N$ such that for any~$i>N$ the circles~$S_{i-1}$ and~$S_i$ are contained
in the $\ve$-neighbourhood of the circles~$\Seven$ and~$\Sodd$,
hence the points of~$S_i\cap S_{i-1}$ accumulate to the points of~$\Seven\cap\Sodd$.
By construction, for every~$i\in\z_{\ge0}$, there exists~$k\in\z_{\ge0}$ such that $P_i\in S_{k-1}\cap S_k$,
and hence the sequence~$(P_i)$ is bounded and all its limiting points are contained in~$\Seven\cap\Sodd$.
\end{proof}

\subsubsection{Finiteness Properties of Cross-Ratios}
Recall that the {\it cross-ratio} of four points $z_1,z_2,z_3,z_4\in \bar{\mathbb C}$ is defined as follows:
$$
[z_1,z_2,z_3,z_4]=
\frac{z_1-z_2}{z_1-z_4}
\cdot
\frac{z_3-z_4}{z_3-z_2}.
$$
Note that the cross-ratio is invariant under M\"obius transformations.

\bigskip\noindent
For every~$i\in\z$, $i\ge1$,
let us enumerate the vertices of the triangles~$T_{i-1}$ and~$T_i$
as $Q_i,R_i,U_i,V_i$ so that:
\begin{enumerate}[$\bullet$]
\item
$Q_i,R_i$ are the shared vertices of $T_{i-1},T_i$;
\item
$U_i$ is the vertex of $T_{i-1}$ distinct from~$Q_i,R_i$;
\item
$V_i$ is the vertex of $T_i$ distinct from~$Q_i,R_i$.
\end{enumerate}
Note that $S_{i-1}\cap S_i=\{Q_i,R_i\}$.



\begin{proposition}
\label{finiteCR}
The set of complex cross-ratios of points~$Q_i,R_i,U_i,V_i$ for~$i\in\z$, $i\ge1$, is finite and does not contain zero.
\end{proposition}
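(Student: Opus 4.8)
The plan is to recognise that every consecutive pair of triangles $(T_{i-1},T_i)$ is, up to a single conformal or anti-conformal map, one of only two fixed configurations, and then to invoke the (anti-)conformal invariance of the cross-ratio.

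First I would unwind the recursion in Definition~\ref{def-nested-trio}. Writing $\tau_1=\tp$ and $\tau_0=\tm$, repeated substitution gives
\[
  T_{i-1}=\trioB(x_1,\dots,x_{i-1})=M_{i-1}(\trioB),
  \qquad
  T_i=M_{i-1}\bigl((\tau_{x_i}\circ\conj)(\trioB)\bigr),
\]
where $M_{i-1}=(\tau_{x_1}\circ\conj)\circ\cdots\circ(\tau_{x_{i-1}}\circ\conj)$. Since $B$ is canonical, its line of symmetry is the real axis, so $\conj(\trioB)=\trioB$ and hence $(\tau_{x_i}\circ\conj)(\trioB)=\tau_{x_i}(\trioB)$. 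Thus $M_{i-1}$ carries the pair $(\trioB,\tau_{x_i}(\trioB))$ onto $(T_{i-1},T_i)$. Because each factor $\tau_{x_j}\circ\conj$ is anti-conformal, $M_{i-1}$ is a M\"obius map when $i$ is odd and an anti-M\"obius map when $i$ is even.

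Next I would transport the labelling. The map $M_{i-1}$ is a bijection sending triangles to triangles and preserving incidence of vertices, so it sends the two shared vertices of $\trioB$ and $\tau_{x_i}(\trioB)$ to $\{Q_i,R_i\}$, the remaining vertex of $\trioB$ to $U_i$, and the remaining vertex of $\tau_{x_i}(\trioB)$ to $V_i$. There are only two standard configurations: for $x_i=1$ the pair $(\trioB,\tp(\trioB))$ with shared vertices $\An,\Ap$ and distinguished vertices $\Am$ and $\tp(\An)$, and for $x_i=0$ the pair $(\trioB,\tm(\trioB))$ with shared vertices $\An,\Am$ and distinguished vertices $\Ap$ and $\tm(\An)$. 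Each has a fixed quadruple of vertices and hence a fixed cross-ratio. Using that the cross-ratio is invariant under M\"obius maps and is replaced by its complex conjugate under anti-M\"obius maps (since $\conj$ conjugates every cross-ratio), I conclude that $[Q_i,R_i,U_i,V_i]$ equals the cross-ratio of the corresponding standard quadruple when $i$ is odd and its conjugate when $i$ is even. Up to the harmless finite ambiguity in which shared vertex is named $Q_i$ and which $R_i$ (a relabelling that permutes the value only within the anharmonic group), this leaves finitely many possible values, independent of~$i$.

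Finally, for the non-vanishing claim I would observe that the four points $Q_i,R_i,U_i,V_i$ are pairwise distinct: $Q_i\ne R_i$ as two distinct shared vertices; $U_i$ and $V_i$ differ from $Q_i,R_i$ by definition; and $U_i\ne V_i$, since a common value would be a vertex of both $T_{i-1}$ and $T_i$, hence shared, contradicting its definition. A cross-ratio of four distinct points is a finite nonzero complex number, so $0$ never occurs. The main obstacle is purely bookkeeping: keeping track of the anti-conformal parity of $M_{i-1}$ and of how the vertex labels are transported, so that each value is correctly identified as one of the two standard cross-ratios or its conjugate. Once that is settled, both finiteness and non-vanishing are immediate.
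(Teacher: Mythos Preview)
Your argument is correct and follows essentially the same route as the paper: both reduce the pair $(T_{i-1},T_i)$ via the composition $M_{i-1}$ to one of the two model configurations $(\trioB,\tpm(\trioB))$ and then invoke invariance of the cross-ratio, together with distinctness of the four vertices for the non-vanishing. You are in fact more careful than the paper in tracking the anti-conformal parity of $M_{i-1}$ and noting that conjugation at worst replaces a cross-ratio by its complex conjugate, a point the paper elides by simply writing ``conformally equivalent''.
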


\begin{proof}
For any $i\in\z$, $i\ge 1$,
the triangles $T_{i-1},T_i$ are conformally equivalent to $T_0,T_1$,
and the set of points $\{Q_i,R_i,U_i,V_i\}$ is conformally equivalent
to one of the sets
\[
  \{\Am,\An,\Ap,\tp(\An)\},\quad
  \{\Ap,\An,\Am,\tm(\An)\},
\]
hence there are only finitely many possible distinct values for their complex cross-ratios.
The points $Q_i,R_i,U_i,V_i$ are distinct by construction,
hence their cross-ratios are not equal to zero.
\end{proof}

\begin{proposition}
\label{OnePoint}
The intersection $\Deven\cap\Dodd$ is a one-point set.
\end{proposition}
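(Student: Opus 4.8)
The plan is to prove the two bounds $\lvert\Deven\cap\Dodd\rvert\ge1$ and $\lvert\Deven\cap\Dodd\rvert\le1$ separately. The nonemptiness is immediate: since $T_i\subset D_i$ by Corollary~\ref{corollary-nested}(\ref{part-T-in-D}) and the triangles are nested by Corollary~\ref{corollary-nested}(\ref{part-nested}), every tail of the even subsequence of triangles lies in $\Deven$ and every tail of the odd one in $\Dodd$, whence $\bigcap_{n}T_n\subseteq\Deven\cap\Dodd$; as the $T_n$ are nested nonempty compact sets we have $\bigcap_n T_n\ne\emptyset$, so $\Deven\cap\Dodd\ne\emptyset$. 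The content of the statement is the upper bound, which I would obtain by contradiction from the cross-ratio finiteness of Proposition~\ref{finiteCR}.

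So suppose $\Deven\cap\Dodd$ contains two distinct points. Then both $\Deven$ and $\Dodd$ contain two distinct points, so both have positive radius and $\Seven,\Sodd$ are genuine circles. By Corollary~\ref{LemmaLimitCircles}$(iii)$ the set $\Seven\cap\Sodd$ is nonempty and $\angle(\Seven,\Sodd)=\angle(S_0,S_1)$ is not an integer multiple of $\pi$; since two genuine circles meeting in a single point would be tangent and hence meet at angle zero, we must in fact have $\Seven\cap\Sodd=\{p,q\}$ with $p\ne q$.

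The decisive structural observation, and the step I expect to be the main obstacle, is to pin down exactly \emph{how} the relevant vertices collapse onto $\{p,q\}$. Recall that all accumulation points of the vertex sequence $(P_i)$ lie in $\Seven\cap\Sodd=\{p,q\}$. The two vertices shared by consecutive triangles $T_i$ and $T_{i+1}$ are precisely the two points of $S_i\cap S_{i+1}$ (Corollary~\ref{corollary-nested}(\ref{part-petals})), and $S_i\cap S_{i+1}\to\Seven\cap\Sodd=\{p,q\}$. Thus for \emph{every} $i$ the shared pair converges to the full two-point set $\{p,q\}$, which forces the limits of two consecutive vertices to be distinct; tracking the enumeration, in which each triangle is a triple of consecutive vertices and the four points $Q_i,R_i,U_i,V_i$ are four consecutive vertices, this propagates to the alternation $\dots,p,q,p,q,\dots$ of the vertex limits. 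It is essential here that consecutive pairs $S_i\cap S_{i+1}$ exhaust \emph{both} limit points rather than merely accumulating somewhere in $\{p,q\}$, for this is what rules out a three-to-one collision of the four points and guarantees a genuine two-and-two (alternating) split; only the latter degenerates the cross-ratio.

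Finally I would feed the alternation into the cross-ratio. With the limits of $Q_i,R_i,U_i,V_i$ split two-and-two between $p$ and $q$, inspection of
\[
[Q_i,R_i,U_i,V_i]=\frac{(Q_i-R_i)(U_i-V_i)}{(Q_i-V_i)(U_i-R_i)}
\]
shows that either the numerator tends to a nonzero limit while the denominator tends to zero, or numerator and denominator tend to equal nonzero limits, so the cross-ratio tends to one of the degenerate values $0$, $1$, $\infty$. But Proposition~\ref{finiteCR} says these cross-ratios lie in a fixed finite set, and each is a cross-ratio of four distinct points and hence differs from $0,1,\infty$; along any convergent subsequence the values are eventually constant and equal to such an admissible value, so the limit cannot be $0$, $1$ or $\infty$. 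This contradiction forces $\Deven\cap\Dodd$ to contain at most one point, and together with the nonemptiness it is exactly one point.
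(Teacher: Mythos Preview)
Your overall strategy matches the paper's (contradiction via the finite cross-ratio set of Proposition~\ref{finiteCR}), but the step you flag as ``the main obstacle'' is exactly where the argument breaks. The assertion that ``each triangle is a triple of consecutive vertices'' and that $Q_i,R_i,U_i,V_i$ are ``four consecutive vertices'' is false. Writing $T_i=\{P_{c_i},P_i,P_{i+1}\}$ inductively, one finds $c_{i+1}\in\{c_i,i\}$, so the ``old'' vertex $P_{c_i}$ can be carried along indefinitely and the four points of interest are $\{P_{c_i},P_i,P_{i+1},P_{i+2}\}$ with $c_i$ possibly much smaller than $i-1$. What you do know is only that the shared pair $\{Q_i,R_i\}=\{P_{c_i},P_i\}$ splits between $p$ and $q$; this says nothing about $U_i$ and $V_i$. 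Concretely, whenever $c_{i+1}=c_i$ one gets $P_i$ and $P_{i+1}$ on the \emph{same} side, producing a genuine three-to-one collision $(Q_i,U_i,V_i\text{ near one point},\ R_i\text{ near the other})$. In that configuration your displayed cross-ratio is of the indeterminate form $\tfrac{(p-q)\cdot 0}{0\cdot(p-q)}$ and there is no reason for it to tend to $0$, $1$, or $\infty$; in fact Proposition~\ref{finiteCR} tells you it stays at one of the finitely many admissible values.

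The paper's proof confronts this case head-on: it shows that if the two--two split occurs infinitely often the cross-ratio argument goes through, while if from some index on the split is always three--one, then the binary sequence $(x_k)$ is eventually constant; for such tails one checks directly that all vertices converge to a single point (the common vertex $\sp\cap\sn$ up to a conformal map), contradicting the two-point hypothesis. Your proposal is missing precisely this dichotomy and the treatment of the eventually constant tail.
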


\begin{proof}
Corollary~\ref{LemmaLimitCircles}$(iii)$ implies that $\Seven\cap\Sodd$ consists of one or two points.
Assume that the set~$\Deven\cap\Dodd$ consists of more than one point, then the intersection~$\Seven\cap\Sodd$ consists of exactly two distinct points.
Let $d>0$ be the Euclidean distance between these two points.
Then for every $\ve>0$ there exists $N$ such that for all~$i>N$ 
all four points $Q_i,R_i,U_i,V_i$ are in the $\ve$-neighbourhood of~$\Seven\cap\Sodd$.

Let us consider $\ve<d/3$.
Then the $\ve$-neighbourhood of the two-point set~$\Seven\cap\Sodd$
is the union of two disjoint $\ve$-discs~$W_1$ and~$W_2$.
Note that $S_{i-1}\cap S_i=\{Q_i,R_i\}$,
hence for sufficiently large~$i$,
the points~$Q_i$ and~$R_i$ are in different $\ve$-discs~$W_j$.

Suppose that the points $U_i,V_i$ are in different $\ve$-discs~$W_j$,
say $Q_i,U_i$ are in one disc and $R_i,V_i$ in the other.
Then
\[
  \Big|[Q_i,U_i,R_i,V_i]\Big|
  =\frac{|Q_i-U_i|\cdot|R_i-V_i|}{|Q_i-V_i|\cdot|U_i-R_i|}
  \le\frac{(2\ve)\cdot(2\ve)}{(d-2\ve)\cdot(d-2\ve)}
  =\frac{4\ve^2}{(d-2\ve)^2}.
\]
In the case of $Q_i,V_i$ being in one disc and $R_i,U_i$ in the other we get the same upper bound for the cross-ratio~$\Big|[Q_i,V_i,R_i,U_i]\Big|$.

If the points $U_i,V_i$ are in different $\ve$-discs~$W_j$
for infinitely many values of~$i$,
then for any~$\de>0$ we can find~$i$ such that the absolute value of the complex cross-ratio of some permutation of~$Q_i,R_i,U_i,V_i$ is smaller than~$\de$.
On the other hand, by Proposition~\ref{finiteCR}, the set of such cross-ratios is discrete and does not contain zero.
We arrive at a contradiction.

Therefore from some point in the sequence the points~$U_i,V_i$ will always be in the same $\ve$-disc~$W_j$.
It is important to note that in one step we change one of four points~$Q_i,R_i,U_i,V_i$ and keep the other three (though the labels $Q,R,U,V$ might change).

\vspace{1mm}

Suppose that $Q_i,U_i,V_i$ are in one $\ve$-disc
and $R_i$ in the other.
The triangles~$T_{i-1}$ and~$T_{i+1}$ share exactly one vertex, either~$Q_i$ or~$R_i$.
The shared vertices~$\{Q_{i+1},R_{i+1}\}$ of~$T_i$ and~$T_{i+1}$ are either~$\{Q_i,V_i\}$ or~$\{R_i,V_i\}$.
We know that $Q_{i+1}$ and~$R_{i+1}$ must be in different $\ve$-discs~$W_j$.
By assumption, $Q_i$ and~$V_i$ are in the same $\ve$-disc,
hence we must have $\{Q_{i+1},R_{i+1}\}=\{R_i,V_i\}$,
the shared vertex of~$T_{i-1}$ and~$T_{i+1}$ is~$R_i$,
and the vertices $Q_i,U_i,V_i,Q_{i+1},U_{i+1},V_{i+1}$ are all in the same $\ve$-disc. 
Similarly, if the vertices~$R_i,U_i,V_i$ are in the same $\ve$-disc,
then we must have $\{Q_{i+1},R_{i+1}\}=\{Q_i,V_i\}$,
the shared vertex of~$T_{i-1}$ and~$T_{i+1}$ is~$Q_i$
and the points $R_i,U_i,V_i,R_{i+1},U_{i+1},V_{i+1}$ are all in the same $\ve$-disc.
In either case, the binary sequence $(x_i)_{i=1}^\infty$ becomes constant from some point onwards.

\vspace{1mm}

Due to symmetry and conformal invariance, it is sufficient to consider the binary sequence~$(x_i)_{i=1}^\infty$ with~$x_i=1$ for all~$i$.
In this case the points~$U_i,V_i$ are either both on the edge~$\sn$ or both on the edge~$\sp$.
Therefore their limit is the single point~$\sn\cap\sp$.
Hence all vertices converge to a single point.
Therefore, the intersection $\Deven\cap\Dodd$ must consist of only one point.
\end{proof}

\subsubsection{Conclusion of the Proof of Theorem~\ref{convergence-triangles}:}
Corollary~\ref{corollary-nested}(\ref{part-nested})
states that the sequence of triangles~$T_i=\trioB(x_1,\dots,x_i)$ is nested, i.e.\ $T_i\subset T_{i-1}$.
Corollary~\ref{corollary-nested}(\ref{part-T-in-D})
implies that $T_i\subset D_i$ and $T_{i-1}\subset D_{i-1}$.
Taking into account $T_i\subset T_{i-1}$
we obtain $T_i\subset D_{i-1}\cap D_i$ and hence
\[
  \bigcap_{i=1}^{\infty}T_i
  \subset 
  \bigcap_{i=1}^{\infty}(D_{i}\cap D_{i-1})
  =\Deven\cap\Dodd.
\]
Proposition~\ref{OnePoint} implies that~$\Deven\cap\Dodd$
and therefore $\bigcap\limits_{i=1}^{\infty}T_i$
is a one-point set.\qed

\begin{remark}
Note that the proof of Theorem~\ref{convergence-triangles} remains valid for canonical bryophylla that correspond to the boundary line $PL$ of the curvilinear triangle $APL$ in the configuration space in Figure~\ref{pic1}.
\end{remark}

\section{Open Questions}
\label{Open questions for further study}

\noindent
Let us conclude with several open questions for further study.
In this paper we mostly study Farey bryophylla,
so the following question is of interest:

\begin{problem}
Study the properties of canonical bryophylla that correspond to some regions in the complement of the curvilinear triangle $APL$ in the configuration space in Figure~\ref{pic1}.
\end{problem}

\bigskip\noindent
In our study of complex bryophylla in~$\partial\h^3$, we make use of the identification with the complex plane and techniques available in the one-dimensional situation.
In $\partial\h^4$,
the situation might be substantially different,
so we can pose the following problem:

\begin{problem}
Generalise the notion of conformal bryophylla to higher dimensions.
\end{problem}

\bigskip\noindent
There is hope that the calculations using complex numbers can be replaced by calculations with some conformal invariants such as angles between circles, cross-ratios of four points etc. 
The first step in this direction might be the following question:

\begin{problem}
Link the Gauss measure on conformal bryophylla
with conformal invariants of the set.
\end{problem}

\bigskip\noindent
Currently the Gauss measure is defined as a lifting from the segment~$[0,1]$, which does not immediately relate to its conformal properties. 
Note that a similar approach was used to study Gauss-Kuzmin statistics for multi-dimensional continued fractions in the sense of Klein (see e.g.~\cite{Kontsevich1999}
and~\cite{Karpenkov2007}).


\providecommand{\bysame}{\leavevmode\hbox to3em{\hrulefill}\thinspace}
\providecommand{\MR}{\relax\ifhmode\unskip\space\fi MR }
\providecommand{\MRhref}[2]{%
  \href{http://www.ams.org/mathscinet-getitem?mr=#1}{#2}
}
\providecommand{\href}[2]{#2}

\end{document}